\newtheorem{LM}{Lemma}[section]
\newtheorem{THM}[LM]{Theorem}
\newtheorem{Q}[LM]{Question}
\newtheorem{PR}[LM]{Proposition}
\newtheorem{COR}[LM]{Corollary}
\newtheorem{CL}[LM]{Claim}
\newtheorem{RK}[LM]{Remark}
\def\Vec#1{\overrightarrow{#1}}
\def\dps{\displaystyle}
\def\QHS{$\mathbb{Q}HS$}
\def\ZHS{$\mathbb{Z}HS$}
\def\C0{$\mathcal{C}^0$}
\def\c2{$\mathcal{C}^2$}
\def\1n{\hbox{$\{1,\dots,n\}$}}
\begin{document}
\title{Existence of Taut foliations on Seifert fibered homology $3$-spheres} 
%\author[Shanti Caillat Gibert and Daniel Matignon]{Shanti Caillat Gibert$^{\dagger}$ and Daniel Matignon$^{\dagger}$}
\author{Shanti Caillat-Gibert}
\email{shanti@cmi.univ-mrs.fr}
\address{CMI, Universit\'e de Marseille-Provence,
13453-Marseille cedex 13, France}
\author{Daniel Matignon}
\email{matignon@cmi.univ-mrs.fr}
\address{CMI, Universit\'e de Marseille-Provence,
13453-Marseille cedex 13, France}

\date{\today}

\keywords{
Homology $3$-spheres, Taut foliation, Seifert-fibered $3$-manifolds
}
\subjclass{Primary 57M25, 57M50, 57N10, 57M15}

\begin{abstract}
This paper concerns the problem of existence of taut foliations among $3$-manifolds.
Since the contribution of David Gabai \cite{Ga},
we know that closed $3$-manifolds with non-trivial second homology group
admit a taut foliations.
The essential part of this paper focuses on Seifert fibered homology $3$-spheres.
The result is quite different if they are integral or rational but non-integral homology $3$-spheres.
Concerning integral homology $3$-spheres, we prove that all but the $3$-sphere and the Poincar\'e $3$-sphere admit a taut foliation.
Concerning non-integral homology $3$-spheres,
we prove there are infinitely many which admit a taut foliation, and infinitely many without taut foliation.
Moreover, we show that the geometries do not determine the existence of taut foliations
on non-integral Seifert fibered homology $3$-spheres.
\end{abstract}

\maketitle

\tableofcontents

\section{Introduction}
All $3$-manifolds are considered compact, connected and orientable.
Taut foliations provide deep information on $3$-manifolds and their contribution in understanding the topology and geometry of $3$-manifolds is still in progress. The first result came from 
S. P. Novikov \cite{No} in 1965, who proved that a $3$-manifold which admit a taut foliation has to be irreducible or $\mathbb{S}^2\times\mathbb{S}^1$.
Since then, we know \cite{Pa} that such manifolds have $\mathbb{R}^3$ for universal cover,
and that their fundamental group is infinite \cite{No} and Gromov negatively curved when the manifold is also toroidal~\cite{Ca}.
Recently, W. P. Thurston has exhibit an approach with taut foliations towards the geometrization.

In \cite{Ga}, D. Gabai proved that a closed $3$-manifold with a non-trivial second homo\-logy group admits a taut foliation.
A lot of great works then concern the existence of taut foliations, see for examples \cite{B1, B2, BNR, Clauss, RSS}.
This paper seeks to answer the question for Seifert fibered $3$-manifolds.
In the following, a {\it non-integral homology $3$-sphere} means a rational
homology $3$-sphere, which is not an integral homo\-logy $3$-sphere.
The results are quite different if they are integral homology $3$-spheres,
or non-integral homology $3$-spheres.

\begin{THM}[Main Theorem 1]\label{integral thm}
Let $M$ be a Seifert fibered integral homology $3$-sphere.
Then $M$ admits a taut analytic foliation if and only if
$M$ is neither ho\-me\-omorphic to the $3$-sphere nor the Poincar\'e sphere.
\end{THM}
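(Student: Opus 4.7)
The plan splits the theorem into the easy necessity direction and the substantial existence construction, following a geometric approach.

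\textbf{Necessity.} If $M$ is $S^3$ or the Poincar\'e sphere, then $\pi_1(M)$ is finite (trivial, or of order $120$). By Novikov's theorem a taut foliation on a closed orientable irreducible $3$-manifold has no Reeb components, and Palmeira's theorem then forces its universal cover to be $\mathbb{R}^3$. This is incompatible with finite $\pi_1$, so neither exceptional manifold admits a taut foliation.

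\textbf{Sufficiency.} Any other Seifert fibered integral homology $3$-sphere is a Brieskorn sphere $M=\Sigma(a_1,\ldots,a_n)$ with $n\geq 3$, pairwise coprime $a_i\geq 2$, and $\{a_1,a_2,a_3\}\neq\{2,3,5\}$ when $n=3$. In every such case the orbifold Euler characteristic $2-\sum(1-1/a_i)$ of the base $S^2(a_1,\ldots,a_n)$ is strictly negative, so the base is hyperbolic and carries a discrete faithful Fuchsian representation $\rho_0\colon \pi_1^{\mathrm{orb}}\to PSL(2,\mathbb{R})$. My plan is to lift $\rho_0$ to $\widetilde{PSL}(2,\mathbb{R})$ so that the Euler number of the lift matches the Seifert Euler number $e(M)=-1/(a_1\cdots a_n)$, realizing $M\cong \Gamma\backslash\widetilde{PSL}(2,\mathbb{R})$ with Seifert fibers running along the central $\mathbb{R}$-direction.

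Once that identification is in hand, the desired foliation comes directly from the geometry. The weak stable foliation of the geodesic flow on the unit tangent bundle of the hyperbolic base orbifold is a real analytic codimension-one foliation that is left-invariant under $\widetilde{PSL}(2,\mathbb{R})$; it therefore descends from $\widetilde{PSL}(2,\mathbb{R})$ to $M$ as a real analytic codimension-one foliation. Each leaf lifts to a plane inside $\widetilde{PSL}(2,\mathbb{R})\cong\mathbb{R}^3$, and every regular Seifert fiber is a closed analytic transversal meeting every leaf, so the foliation is taut.

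\textbf{Main obstacle.} The principal difficulty is securing the identification $M\cong \Gamma\backslash\widetilde{PSL}(2,\mathbb{R})$: this requires a Milnor--Wood-style matching of the Seifert Euler number with the Euler class of a lift of $\rho_0$, verified in terms of the Seifert invariants $(a_i,b_i)$. It is exactly this matching that fails for $\Sigma(2,3,5)$, whose base orbifold is spherical rather than hyperbolic, and which separates the exceptional cases from the generic ones. One may bypass the direct Euler-number bookkeeping by invoking the geometrization of Seifert fibered manifolds due to Scott and Raymond, which asserts that all the listed Brieskorn spheres carry $\widetilde{PSL}(2,\mathbb{R})$ geometry; the foliation construction above then immediately completes the proof.
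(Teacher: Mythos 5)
Your necessity direction is the same as the paper's (finite fundamental group plus Novikov), and your sufficiency direction is a genuinely different route: the paper never exploits the geometry directly, but reduces to $b_0=1$ and then proves, by the long arithmetic case analysis of Proposition~\ref{main} (Steps 1--4, the bulk of Section 8), that the Jankins--Neumann--Naimi inequalities of Property~$(*)$ always admit a solution $(m,\alpha)$, whence a horizontal analytic foliation via Theorems~\ref{exist_taut} and~\ref{ehn}. Your plan --- identify $M$ with $\Gamma\backslash\widetilde{PSL}(2,\mathbb{R})$ and push down the foliation by left cosets of the Borel subgroup (the weak stable foliation of the geodesic flow), which is analytic and everywhere transverse to the fibers given by the lifted rotation flow --- would indeed yield a horizontal, hence taut, analytic foliation and would replace the paper's combinatorics by a homogeneous-space construction.

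However, there is a genuine gap exactly at the step you propose to bypass. That $M$ carries the $\widetilde{SL}_2(\mathbb{R})$-geometry (Scott--Raymond, or Corollary~\ref{poincsphere} here) only gives $M=\Gamma\backslash\widetilde{SL}_2(\mathbb{R})$ with $\Gamma$ a lattice in the full isometry group, whose identity component is $\widetilde{PSL}(2,\mathbb{R})\times_{\mathbb{Z}}\mathbb{R}$, strictly larger than the group of left translations. The coset foliation is invariant under left translations but \emph{not} under the extra $\mathbb{R}$-factor: right translation along the fiber direction conjugates the Borel subgroup to a different one (rotating unit vectors inside their fibers does not preserve the weak stable foliation), so for a general lattice in the isometry group the foliation does not descend. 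Hence ``$M$ admits the geometry'' does not immediately complete the proof; you need the stronger statement that $\pi_1(M)$ embeds in $\widetilde{PSL}(2,\mathbb{R})$ itself as a cocompact group of left translations, with the Seifert fibration of $M$ induced by the rotation flow --- which is precisely the Euler-class matching you identified as the main obstacle and then waved away. (Note also that $\rho_0$ itself cannot be lifted, since the orbifold group has torsion while $\widetilde{PSL}(2,\mathbb{R})$ is torsion-free; what must be produced is an embedding of the central extension $\pi_1(M)$.) The needed realization is true and classical: left quotients of $\widetilde{PSL}(2,\mathbb{R})$ are exactly the Seifert manifolds over hyperbolic orbifolds with $e(M)=\chi^{orb}/d$ for some positive integer $d$, and for a $\mathbb{Z}$HS one has $\chi^{orb}/e(M)=\pm\big((n-2)a_1\cdots a_n-\sum_i a_1\cdots a_n/a_i\big)\in\mathbb{Z}$, with the correct sign after orienting $M$ so that $e(M)<0$; for $n=3$ this is Milnor's theorem on Brieskorn manifolds and the general case is due to Raymond--Vasquez and Neumann--Raymond. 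With such a citation (or a proof of it) your argument closes and gives a shorter geometric proof of sufficiency; without it, the descent of the foliation is unjustified.
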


Concerning non-integral homology $3$-spheres, the non-existence is not isolated.
Of course, the $3$-sphere and lens spaces do not admit a taut foliation,
but for any choice of the number of exceptional slopes,
there exist infinitely many which admit a taut foliation, and infinitely many which do not.

\begin{THM}[Main Theorem 2]\label{rational thm}
Let $n$ be a positive integer greater than two.
Let $\mathcal{S}_n$ be the set of Seifert fibered $3$-manifolds whith $n$ exceptional fibers,
which are non-integral homology $3$-spheres.
For each $n$~:
\begin{itemize}
\item[(i)]
There exist infinitely many Seifert fibered manifolds in $\mathcal{S}_n$ which admit a taut analytic foliation; and
\item[(ii)]
There exist infinitely many Seifert fibered manifolds in $\mathcal{S}_n$ which do not admit a taut \c2-foliation.
\item[(iii)] 
There exist infinitely many Seifert fibered manifolds in $\mathcal{S}_3$ which do not admit a taut \C0-foliation.
\end{itemize}
\end{THM}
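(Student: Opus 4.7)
The plan is to encode the question as arithmetic on the Seifert invariants $(b_0;(a_1,b_1),\ldots,(a_n,b_n))$ of $M\in\mathcal{S}_n$. The first homology of $M$ has order $|a_1\cdots a_n\cdot e|$ where $e=b_0+\sum b_i/a_i$ is the rational Euler number, so $M\in\mathcal{S}_n$ amounts to $e\neq 0$ together with a non-trivial common factor among the $a_i$. By Brittenham's theorem, every essential lamination in a Seifert fibered manifold with hyperbolic base orbifold is isotopic to either a horizontal or a vertical one, and a vertical taut foliation cannot exist in a rational homology sphere (a generic fiber would be a leaf of a foliation without closed transversal); hence every taut \c2-foliation on $M$ must be horizontal, and the existence of such a foliation is controlled by the Eisenbud--Hirsch--Neumann inequalities on $(b_0;(a_i,b_i))$, completed by Jankins--Neumann and Naimi.

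\textbf{Parts (i) and (ii).} I would construct the two families by sitting respectively inside and outside the EHN region. For (i), fix $n-1$ of the pairs $(a_i,b_i)$ so that two of the $a_i$ share a common prime factor (ensuring non-integrality), and let $(a_n,b_n)$ range over an infinite family chosen so that the EHN inequalities continue to hold; the resulting horizontal foliation can be taken analytic by a standard smoothing, since it arises from a representation of the base orbifold group into an analytic subgroup of $\mathrm{Homeo}^+(S^1)$. For (ii), push the rational Euler number $e$ outside the allowed interval---for instance by taking $b_0$ negative of large absolute value and all $b_i/a_i$ small and positive---while letting $a_n$ range over an arithmetic progression sharing a factor with some earlier $a_j$. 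Infinitely many values of $a_n$ then yield pairwise non-homeomorphic manifolds in $\mathcal{S}_n$ admitting no horizontal foliation, hence no taut \c2-foliation.

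\textbf{Part (iii) and the main obstacle.} The reduction above uses smoothness essentially (Brittenham's isotopy and the EHN criterion both rely on $\mathcal{C}^2$ transversality), so ruling out \C0-foliations requires a regularity-independent obstruction. For the small Seifert fibered case $n=3$ I would invoke the Lisca--Stipsicz characterization of Heegaard Floer $L$-spaces among Seifert rational homology spheres in terms of their Seifert invariants, combined with the theorem (Ozsv\'ath--Szab\'o in the smooth setting, extended to the topological setting by Bowden and by Kazez--Roberts) that an $L$-space carries no taut \C0-foliation. The task is then to exhibit an infinite sub-family of (ii) whose Seifert invariants simultaneously satisfy the Lisca--Stipsicz $L$-space inequalities while preserving a non-trivial gcd among the $a_i$. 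Balancing these two arithmetic constraints---non-pairwise-coprimality of the $a_i$ forcing non-integrality, and the precise sign and size conditions defining the $L$-space region---along a single infinite sequence of parameters is where I expect the real technical work to lie, and is the main obstacle of the argument.
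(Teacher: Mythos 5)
Your parts (i) and (ii) follow essentially the paper's own route: reduce taut $\mathcal{C}^2$-foliations on a Seifert fibered rational homology sphere to horizontal ones, then decide existence with the Eisenbud--Hirsch--Neumann/Jankins--Neumann/Naimi criterion and tune the Seifert invariants to land inside or outside the allowed region. Two corrections there. First, the reduction to horizontal is obtained in the paper not from Brittenham's lamination theorem but from the observation that a taut, transversely oriented foliation of a closed $3$-manifold has no separating compact leaf, hence no compact leaf at all in a rational homology sphere, after which the Thurston/Levitt/EHN/Matsumoto/Brittenham isotopy results apply; your parenthetical reason for excluding vertical pieces ("a generic fiber would be a leaf") is not the actual argument. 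Second, your claimed equivalence that membership in $\mathcal{S}_n$ "amounts to $e\neq 0$ together with a non-trivial common factor among the $a_i$" is false as an equivalence: non-integrality means $|a_1\cdots a_n\, e|\neq 1$, which holds for many manifolds with pairwise coprime $a_i$ (the paper's families are of this type, with $e$ bounded below by a constant so that it cannot equal $\pm 1/(a_1\cdots a_n)$). As a sufficient trick for non-integrality your common-factor condition is fine, so (i) and (ii) would go through once the families are written down and the inequalities checked, as the paper does explicitly with families such as $M(-1,1/2,3/5,\dots)$ and $M(-1,1/2,2/5,k/(7k+1))$, which it moreover arranges to have $b_0=1$ and the $\widetilde{SL}_2(\mathbb{R})$-geometry so as to prove a sharper statement than Main Theorem 2.

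The genuine gap is part (iii). Your premise that "Brittenham's isotopy and the EHN criterion both rely on $\mathcal{C}^2$ transversality" is incorrect: the $\mathcal{C}^2$ hypothesis in the paper comes from the Thurston/Levitt/EHN/Matsumoto isotopy theorems used when $n>3$, whereas Brittenham's theorem for base $\mathbb{S}^2$ with exactly three exceptional fibers is precisely a $\mathcal{C}^0$ result, and it is exactly how the paper obtains (iii): for $n=3$, a taut $\mathcal{C}^0$-foliation (having no compact leaf, by the tautness argument) is $\mathcal{C}^0$-isotopic to a horizontal one, so the same Jankins--Neumann/Naimi obstruction used in (ii) already excludes taut $\mathcal{C}^0$-foliations on the $n=3$ families. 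Your substitute route through $L$-spaces (Lisca--Stipsicz, plus the $\mathcal{C}^0$ extension of Eliashberg--Thurston due to Bowden and Kazez--Roberts) is viable in principle, and the "balancing" you flag is not actually delicate: by Lisca--Stipsicz the $L$-space condition for these Seifert rational homology spheres is equivalent to the non-existence of a horizontal foliation, so the very same families produced for (ii) qualify. But as written you stop at naming this verification as "the main obstacle" and exhibit neither the infinite family nor the check of the $L$-space condition, so part (iii) is not established by your proposal; it also leans on machinery (and results postdating this paper) that the elementary Brittenham route makes unnecessary.
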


Actually, by considering the normalized Seifert invariant
$(0; b_0, b_1/a_1, \dots, b_n/a_n)$ of a Seifert fibered homology $3$-sphere,
and assuming that $b_0\not=-1$ (nor $1-n$),
then $b_0$ determines wether $M$ does or does not admit a taut \c2-foliation,
see Theo\-rem~\ref{ehn}, which collects results in \cite{EHN, JN, N}.
Note that there is a fiber-preserving homeo\-morphism of $M$ which switches $b_0=1-n$ to  $b_0=-1$.
Therefore, the problem remains open only for $b_0=-1$.
We will prove (see Theorem~\ref{geom_taut rational}) that even if
the $3$-manifolds all are equipped with $b_0=-1$,
Main Theorem~$2$ is still true.
To prove the non-existence of taut \c2-foliations, we first prove that 
a taut \c2-foliation can be isotoped to a horizontal one, and then use a characterization of 
horizontal foliations for Seifert fibered homology $3$-spheres
(see bellow for more details~: schedule of the paper).
So, the following result play a key-rule in the proof.

\begin{THM}\label{eq_taut_hor}
Let $M$ be a Seifert fibered rational homology $3$-sphere.
Let $n$ be the number of exceptional fibers of $M$.
If $n>3$ (resp. $n=3$) then
any taut \c2-foliation (resp. \C0-foliation)
of $M$ can be isotoped to be a horizontal foliation.
\end{THM}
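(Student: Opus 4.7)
The plan is to apply a Brittenham-type structure theorem for taut foliations on Seifert fibered $3$-manifolds, which asserts that after an ambient isotopy, $\mathcal{F}$ is either horizontal (transverse to every Seifert fiber) or contains a sublamination made of vertical tori (tori saturated by regular fibers). This dichotomy holds for taut \c2-foliations in general and for \C0-foliations when the ambient Seifert manifold is atoroidal. The strategy is then to eliminate the vertical alternative using the rational homology sphere hypothesis together with the case distinction on $n$.

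When $n=3$, the base orbifold is $S^2(a_1,a_2,a_3)$. Every essential simple closed curve on this orbifold must separate the three cone points into two groups each containing at least two, which is impossible; hence every vertical torus in $M$ is compressible and therefore inessential. Since $M$ is irreducible, Novikov's theorem forbids a compressible torus from occurring as a leaf of a taut foliation, and so the vertical alternative is excluded. In this case $M$ is atoroidal, so the \C0 version of Brittenham's theorem applies, which explains why \C0-regularity is enough.

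When $n>3$, essential vertical tori exist and the vertical alternative is a priori possible. Suppose that, after Brittenham's isotopy, $\mathcal{F}$ contains a vertical torus leaf $T$. Since $M$ is a \QHS, $H_2(M;\mathbb{Z})=0$, so $T$ separates $M$ into two Seifert fibered rational homology solid tori $M_1$ and $M_2$, each with strictly fewer exceptional fibers than $M$. The restriction $\mathcal{F}|_{M_i}$ is a taut \c2-foliation with $T$ as a boundary leaf. One then proceeds by induction on $n$: apply Brittenham relative to $\partial$ in each $M_i$ to isotope $\mathcal{F}|_{M_i}$ (rel $\partial$) to a horizontal foliation on $M_i$, and then glue across $T$ to recover a horizontal foliation on $M$.

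The main obstacle is the gluing step in the case $n>3$. One must check that on the cut torus $T$, the two boundary foliations inherited from the horizontal foliations on $M_1$ and $M_2$ have matching slopes which are moreover distinct from the Seifert fiber slope on $T$, so that the assembled foliation is transverse to every Seifert fiber of $M$. The \c2 hypothesis is crucial here: the \c2 holonomy of $\mathcal{F}$ along $T$ linearizes smoothly, pinning down a unique slope on each side of $T$ and forcing the match. Without \c2-regularity the slope need not be uniquely defined and the gluing may fail, which is why the theorem weakens to $n=3$ in the \C0 category.
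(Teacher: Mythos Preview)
Your argument for $n=3$ is essentially fine, but the inductive scheme for $n>3$ has a genuine gap that cannot be repaired along the lines you suggest.

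Suppose, as you do, that after the Brittenham--Matsumoto isotopy the taut $\mathcal{C}^2$-foliation $\mathcal{F}$ contains a vertical torus leaf $T$. You then cut along $T$ and propose to isotope each restriction $\mathcal{F}|_{M_i}$, \emph{relative to the boundary}, to a horizontal foliation on $M_i$. This is impossible: on $M_i$ the torus $T=\partial M_i$ is a \emph{leaf} of $\mathcal{F}|_{M_i}$, and it is saturated by Seifert fibers. A horizontal foliation, by definition, is everywhere transverse to the fibers; in particular no leaf near $\partial M_i$ can be tangent to $\partial M_i$. Hence no isotopy rel $\partial$ can take $\mathcal{F}|_{M_i}$ to a horizontal foliation while $T$ remains a leaf. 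Your subsequent discussion of ``matching slopes'' and ``$\mathcal{C}^2$ holonomy linearizing'' is therefore moot: there is nothing to glue, because the pieces are not horizontal to begin with. (Incidentally, the genuine role of the $\mathcal{C}^2$ hypothesis in the cited literature is Denjoy/Kopell-type control of circle diffeomorphisms in the holonomy, not a linearization used for gluing across a torus.)

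The paper's proof bypasses this entirely by showing that the vertical alternative simply cannot occur in a \QHS. The key point (Proposition~\ref{sep_torus} and Corollary~\ref{cor cpct leaf}) is that a taut, transversely oriented foliation of a closed $3$-manifold has no compact \emph{separating} leaf: any properly embedded arc in one side of such a leaf must enter and exit with the same transverse sign, hence is somewhere tangent to $\mathcal{F}$, so no closed transversal can meet the leaf. Since every closed orientable surface in a \QHS\ separates, a taut foliation of a \QHS\ has no compact leaf at all. One then quotes the collected result (Theorem~\ref{cpt_iso}) that a foliation without compact leaves on a Seifert fibered space over $\mathbb{S}^2$ is isotopic to a horizontal one --- $\mathcal{C}^0$ suffices for three exceptional fibers by Brittenham, while $\mathcal{C}^2$ is required for $n>3$ by the Thurston--Levitt--Eisenbud--Hirsch--Neumann--Matsumoto line of argument. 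In short: rather than cutting along a hypothetical vertical torus leaf and trying to reassemble, you should use the \QHS\ hypothesis to exclude that leaf outright.
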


Moreover, we will show that the geometries do not determine the existence of taut foliations
on Seifert fibered rational homology $3$-spheres.

\begin{THM}\label{geom_notaut}
Let $M$ be a Seifert fibered rational homology $3$-sphere.
If $M$ does not admit the
$\widetilde{SL}_2(\mathbb{R})$-geometry,
then $M$ does not admit a taut \c2-foliation.
\end{THM}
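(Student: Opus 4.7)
The plan is to stratify by the Thurston geometry of $M$. Since $M$ is a Seifert fibered rational homology $3$-sphere, its base orbifold is $S^2$ with cone points (a positive-genus base would force infinite $H_1(M;\mathbb{Q})$), and the rational Euler number $e(M)$ of the fibration is non-zero (as $|H_1(M;\mathbb{Z})|$ is proportional to $|e(M)|$). Combined with Seifert-compatibility, the possible Thurston geometries reduce to three: spherical $S^3$ when $\chi^{\mathrm{orb}}(B) > 0$, Nil when $\chi^{\mathrm{orb}}(B) = 0$, and $\widetilde{SL}_2(\mathbb{R})$ when $\chi^{\mathrm{orb}}(B) < 0$. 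The hypothesis rules out the last, so I treat the first two.

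In the spherical case, $\pi_1(M)$ is finite, and Novikov's theorem (together with Palmeira's result that the universal cover of a closed tautly-foliated $3$-manifold is $\mathbb{R}^3$) immediately rules out any taut foliation. This is the easy half.

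Now assume $M$ carries the Nil geometry, so $\chi^{\mathrm{orb}}(B) = 0$ and $B$ is one of the Euclidean $2$-orbifolds $(2,3,6)$, $(2,4,4)$, $(3,3,3)$, $(2,2,2,2)$; hence $n \in \{3,4\}$. Suppose for contradiction that $M$ admits a taut $\mathcal{C}^2$-foliation $\mathcal{F}$. Since $\mathcal{C}^2$ implies $\mathcal{C}^0$, Theorem~\ref{eq_taut_hor} applies in both the $n=3$ and $n>3$ regimes, producing an ambient isotopy carrying $\mathcal{F}$ to a horizontal foliation $\mathcal{F}'$. Such a foliation yields a representation $\rho \colon \pi_1^{\mathrm{orb}}(B) \to \mathrm{Homeo}_+(S^1)$ whose Euler number equals $e(M)$, and the orbifold Milnor--Wood inequality gives
\[
|e(M)| \leq -\chi^{\mathrm{orb}}(B) = 0.
\]
This contradicts $e(M) \neq 0$, so no such $\mathcal{F}$ can exist.

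The principal obstacle is not this final inequality but the invocation of Theorem~\ref{eq_taut_hor}: the $\mathcal{C}^2$ regularity in the hypothesis of Theorem~\ref{geom_notaut} is precisely what makes the reduction to horizontal foliations available, and without it the Nil case would require substantially more work. Equivalently, one could appeal to the arithmetic criterion of Theorem~\ref{ehn} on the normalized Seifert invariants $(0; b_0, b_1/a_1, \dots, b_n/a_n)$, which at $\chi^{\mathrm{orb}}(B) = 0$ admits no horizontal solution with $e(M) \neq 0$; the two formulations agree. Assembling the spherical and Nil cases proves the theorem.
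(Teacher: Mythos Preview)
Your argument is correct and takes a genuinely different route from the paper's proof in Section~6. Both proofs begin the same way: reduce to base $S^2$, separate the spherical and Nil cases, and (for Nil) invoke Theorem~\ref{eq_taut_hor} to pass from a taut $\mathcal{C}^2$-foliation to a horizontal one. From there the arguments diverge. The paper carries out an explicit arithmetic case analysis: it uses Theorem~\ref{ehn} to pin down $b_0$, then runs through the finitely many Euclidean orbifold types $(2,2,2,2)$, $(3,3,3)$, $(2,4,4)$, $(2,3,6)$ and, for each admissible choice of $b_i$'s, checks directly (via Corollaries~\ref{1/2} and~\ref{1/3}, i.e.\ the Jankins--Neumann--Naimi criterion) that Property~$(*)$ fails or that the \QHS\ condition $e(M)\neq 0$ is violated. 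Your argument replaces all of this by a single stroke: the orbifold Milnor--Wood inequality $|e(M)|\le -\chi^{\mathrm{orb}}(B)$, which at $\chi^{\mathrm{orb}}=0$ forces $e(M)=0$ and contradicts the \QHS\ hypothesis.

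What each approach buys: yours is shorter and more conceptual, and makes transparent \emph{why} the Nil case fails (it is exactly the boundary case of Milnor--Wood). The paper's approach is more self-contained, since the orbifold Milnor--Wood inequality is not stated among its preliminaries; it follows from \cite{EHN}, or alternatively (and this would make your write-up cleaner) from pulling the horizontal foliation back along the finite torus cover of the Euclidean base orbifold and applying the classical Wood inequality for $T^2$, which forces the Euler number of the cover---and hence of $M$---to vanish. If you want your proof to stand independently, adding that one-line covering argument would remove any doubt about the citation.
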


\begin{RK}
There exist infinitely many such manifolds (see Section~$7$)
but the converse is not true as says Theorem~\ref{geom_taut rational}~: we can give
infinitely many such manifolds, which admit the $\widetilde{SL}_2(\mathbb{R})$-geometry 
(and with $b_0=-1$) but no taut \c2-foliation.
\end{RK}

\begin{THM}\label{geom_taut integral}
Let $M$ be a Seifert fibered integral homology $3$-sphere.
If $M$ admits the $\widetilde{SL}_2(\mathbb{R})$-geometry, then
$M$ is neither homeomorphic to the $3$-sphere nor the Poincar\'e sphere.

In particular (Main Theorem~$1$)
$M$ admits a taut analytic foliation.
\end{THM}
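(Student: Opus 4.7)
The plan is to combine two classical facts with Main Theorem~$1$. First, I would recall that every closed Seifert fibered $3$-manifold admits exactly one of the six Seifert (Thurston) geometries, and which one occurs is entirely determined by the sign of the orbifold Euler characteristic $\chi$ of the base and by the rational Euler number $e$ of the fibration. In particular, a Seifert fibered $3$-manifold cannot simultaneously admit the $\widetilde{SL}_2(\mathbb{R})$-geometry (which corresponds to $\chi<0$ and $e\neq 0$) and the spherical $S^3$-geometry (which corresponds to $\chi>0$ and $e\neq 0$); the two regimes are disjoint.

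Next, I would observe that both $\mathbb{S}^3$ and the Poincar\'e sphere are spherical space forms, and therefore both carry the $S^3$-geometry. Indeed, $\mathbb{S}^3$ is itself the model space, while the Poincar\'e sphere is the quotient $\mathbb{S}^3/I^*$ by the free isometric action of the binary icosahedral group $I^*$. Both are Seifert fibered integral homology $3$-spheres; they are in fact the only Seifert fibered integral homology $3$-spheres with finite fundamental group, and equivalently the only ones in this class supporting the $S^3$-geometry.

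Putting these two observations together, if $M$ is a Seifert fibered integral homology $3$-sphere admitting the $\widetilde{SL}_2(\mathbb{R})$-geometry, then by the uniqueness of the Seifert geometry $M$ cannot also admit the $S^3$-geometry, so $M$ is homeomorphic neither to $\mathbb{S}^3$ nor to the Poincar\'e sphere. The ``in particular'' conclusion is then an immediate invocation of Main Theorem~$1$, which provides a taut analytic foliation on every Seifert fibered integral homology $3$-sphere other than these two exceptional cases.

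There is no serious obstacle here: the whole argument is a matter of matching geometries, and all the substantive difficulty is absorbed into the proof of Main Theorem~$1$ and into the classical classification of geometric structures on Seifert fibered $3$-manifolds. The mild point requiring care is verifying that the Seifert invariants of $\mathbb{S}^3$ and of the Poincar\'e sphere place them unambiguously in the $S^3$-regime (\emph{i.e.}~$\chi>0$, $e\neq 0$), but this is routine from their standard normalized Seifert invariants.
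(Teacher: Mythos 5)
Your argument is correct and follows essentially the same route as the paper: the authors likewise rely on the mutual exclusivity of the geometries via the sign of $\chi_M$ (their Proposition~\ref{prop chi}) together with the observation that $\mathbb{S}^3$ and the Poincar\'e sphere $\mathcal{P}$ satisfy $\chi_M>0$ and hence carry the $\mathbb{S}^3$-geometry, and then they invoke Main Theorem~$1$ for the foliation. The only cosmetic difference is that the paper packages this inside Corollary~\ref{poincsphere} (which also proves the converse identification of the $\mathbb{S}^3$-geometry cases), whereas you only use the easy direction, which is all that is needed.
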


\medskip
\noindent
{\sc Schedule of the paper.}
We organize the paper as follows.
\\\\
In Section $2$,
we recall basic definitions and notations on Seifert fibered $3$-manifolds,
taut or horizontal foliations and well known results.
\\\\
Section $3$ is devoted to the proof of Theorem~\ref{eq_taut_hor},
which is based on Proposition~\ref{sep_torus}, which claims that
a transversely oriented and taut foliation of a closed $3$-manifold cannot
contain a separating compact leaf.
Then, a taut \c2-foliation of a Seifert fibered homology $3$-sphere 
cannot contain a compact leaf (see Corollary~\ref{cor cpct leaf}).
Therefore, it can be isotoped to be horizontal (see Theorem~\ref{cpt_iso}),
by collecting the works on foliations \cite{B2, EHN, Le, Ma, No, Th} of
M.~Brittenham, 
D.~Eisenbud, 
U.~Hirsch, 
G.~Levitt, 
S.~Matsumoto, 
W.~Neumann, 
S.~P.~Novikov and W.~P.~Thurston.

Since a horizontal foliation is clearly a taut foliation,
an immediate consequence is that 
a Seifert fibered rational homology $3$-sphere,
$M$ say, admits a taut \c2-foliation 
if and only if $M$ admits a horizontal foliation (Corollary~\ref{equiv cor}).
This corollary was also proved by combining results 
\cite{ET, JN, LM, LS, N, OS}
of
Y.~Eliashberg,
M.~Jankins,
P.~Lisca,
G.~Mati\'c,
R.~Naimi,
W.~Neumann,
P. ~Ozsv\'ath, 
A.~I.~Stipsciz,
Z.~Szab\'o 
and W.~P.~Thurston
(for more details, see the end of Section~$3$).
\\\\
The goal of Section $4$ is a characterization of Seifert fibered rational
homology $3$-spheres, which admit a taut \c2-foliation.
Since a taut \c2-foliation can be isotoped to be horizontal,
we use the characterization \cite{JN,N} of 
M. Jankins, R.~Naimi and W. Neumann for horizontal foliations
(for more details, see Section~$4$).
This characterization gives rise to criteria to be satisfied by the Seifert invatiants.
\\\\
Section $5$ concerns the 
geometries of homology $3$-spheres.
We will prove the following result.
\begin{PR}\label{global}
Let $M$ be a Seifert fibered rational homology $3$-sphere, with $n$ exceptional fibers.
If $M$ does not admit the $\widetilde{SL}_2(\mathbb{R})$-geometry, then
the following statements all are satisfied.
\begin{itemize}
\item[(i)]
$n\leq 4$.
\item[(ii)] If $n=4$ then $M$ admits the $\mathcal{N}$il-geometry, and is a
non-integral homology $3$-sphere.
\item[(iii)]
If $M$ is an integral homology $3$-sphere,
then $M$ admits the $\mathbb{S}^3$-geometry and is either
homeomorphic to the $3$-sphere or to the Poincar\'e sphere.
\end{itemize}
\end{PR}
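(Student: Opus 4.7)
My approach is to translate the hypothesis ``$M$ does not admit the $\widetilde{SL}_{2}(\mathbb{R})$-geometry'' into a numerical inequality on the Seifert invariants and then run a short enumeration. As a preliminary, since $M$ is a rational homology sphere, $b_{1}(M)=0$; for a closed orientable Seifert fibered $3$-manifold over a base of genus $g$ with rational Euler number $e$, one has $b_{1}(M;\mathbb{Q})=2g$ when $e\neq 0$ and $2g+1$ when $e=0$. Hence $g=0$ and $e\neq 0$, and $M$ admits normalized Seifert invariants $(0;b_{0},b_{1}/a_{1},\dots,b_{n}/a_{n})$ with $e=-(b_{0}+\sum b_{i}/a_{i})\neq 0$.

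With $e\neq 0$, the classification of Seifert geometries leaves only three options, $\mathbb{S}^{3}$, $\mathcal{N}$il, and $\widetilde{SL}_{2}(\mathbb{R})$, distinguished respectively by $\chi^{orb}>0$, $\chi^{orb}=0$, and $\chi^{orb}<0$, where $\chi^{orb}=2-\sum_{i=1}^{n}(1-1/a_{i})$ is the orbifold Euler characteristic of the base. The hypothesis reads $\chi^{orb}\geq 0$. Each $a_{i}\geq 2$ gives $1-1/a_{i}\geq 1/2$, so $\chi^{orb}\leq 2-n/2$, forcing $n\leq 4$ and proving~(i).

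For~(ii), suppose $n=4$. Then $\chi^{orb}\geq 0$ reads $\sum 1/a_{i}\geq 2$; combined with each $1/a_{i}\leq 1/2$, equality must hold throughout, so $a_{1}=a_{2}=a_{3}=a_{4}=2$ and $\chi^{orb}=0$, i.e.\ the $\mathcal{N}$il-geometry. Each $b_{i}$ is then $1$, so $e=-(b_{0}+2)$ is a nonzero integer and the standard order formula yields $|H_{1}(M;\mathbb{Z})|=|\prod a_{i}\cdot e|=16|b_{0}+2|\geq 16$, so $M$ is not an integral homology sphere.

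For~(iii), a Seifert fibered integral homology sphere must have pairwise coprime multiplicities $a_{i}$ (forced by $\prod a_{i}\cdot|e|=1$). Together with~(ii) this yields $n\leq 3$. If $n=3$, then $1/a_{1}+1/a_{2}+1/a_{3}\geq 1$ with pairwise coprime $a_{i}\geq 2$ admits only $(2,3,5)$, producing the Poincar\'e sphere with $\chi^{orb}>0$, hence $\mathbb{S}^{3}$-geometry. If $n\leq 2$, $M$ is a lens space, and the only integral lens space is $S^{3}$, again $\mathbb{S}^{3}$-geometric. The main bookkeeping subtlety, and the only non-routine step, is getting the $H_{1}$-order formula and the pairwise-coprimality criterion right: this is the bridge that turns the purely geometric hypothesis into the arithmetic enumeration above; everything else reduces to one-line inequalities driven by $\chi^{orb}\geq 0$.
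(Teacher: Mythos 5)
Your proposal is correct and follows essentially the same route as the paper: the $\chi^{orb}$ trichotomy for geometries with $e\neq 0$, the bound $\chi^{orb}\leq 2-n/2$ giving $n\leq 4$, the equality analysis forcing $a_i=2$ and $\mathcal{N}$il-geometry when $n=4$, and in the integral case pairwise coprimality of the $a_i$ plus the enumeration $\sum 1/a_i\geq 1$ yielding only $(2,3,5)$, i.e.\ the Poincar\'e sphere. The only cosmetic difference is that you rule out integrality for $n=4$ via the order formula $|H_1|=|a_1\cdots a_n\,e|\geq 16$, where the paper invokes the same fact in the form of its coprimality remark.
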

We may note that if $n=2$ then $M$ is a lens space (including $\mathbb{S}^3$ and $\mathbb{S}^1\times\mathbb{S}^2$).

We combine Proposition~\ref{global} with the criteria given by the characterization of Section~$4$,
to prove Theorem \ref{geom_notaut}.
\\\\
Section $6,\ 7$ and $8$ are devoted respectively to the proof of Theorem \ref{geom_notaut},
Theorem \ref{geom_taut rational} and Main Theorem $1$.

To prove Theorem \ref{geom_taut rational}, we first exhibit infinite families of
Seifert fibered  non-integral homology spheres,
which admit the $\widetilde{SL}_2(\mathbb{R})$-geometry (and $b_0=1$).
Then, we prove that they do satisfy (or do not satisfy)
the criteria of the characterization described in Section~$4$.

To prove Main Theorem $1$, we need to study more deeply these criteria.
\\\\
{\sc Perspectives.}
\\
By F. Waldhausen, \cite{Wa} we know that an
incompressible compact surface in a 
Seifert  fibered $3$-manifold (not necessarily a homology $3$-sphere)
 can be isotoped to be either horizontal or vertical.
This is clearly not the same for foliations.

A vertical leaf is homeomorphic to either a $2$-cylinder (${\mathbb S}^1\times\mathbb R$)
or a $2$-torus (${\mathbb S}^1\times{\mathbb S}^1$). Therefore, taut foliations are not necessary 
isotopic to vertical ones; and vice-versa, vertical foliations are not necessary 
isotopic to taut foliations, e.g. cylinders which wrap around two tori in a turbulization way;
for more details, see~\cite{Gi2}.
But clearly, horizontal foliations are taut.

By Theorem~\ref{cpt_iso},
a taut \c2-foliation can be isotoped to a horizontal foliation,
if there is no compact leaf.

We wonder if a taut \C0-foliation, without compact leaf, of a Seifert fibered $3$-manifold
can be isotoped to be horizontal and so analytic.
By \cite{BNR}, there exist manifolds which admit taut \C0-foliation but not taut \c2-foliation.
Therefore,
that seems impossible in general, but the question is still open for homology $3$-spheres.

\begin{Q}
Let $\mathcal{F}$ be a taut \C0-foliation, without compact leaf, of a Seifert fibered 
homology $3$-sphere.
Can $\mathcal{F}$ be isotoped to be horizontal~?
\end{Q}

M.~Brittenham~\cite{B2}, answers the question when the base is $\mathbb{S}^2$ with $3$ exceptional fibers,
see Remark~\ref{history} for more details.

Gluing Seifert fibered $3$-manifolds with boundary components along some of them (or all)
give {\it graph manifolds}.
We wonder if we can classify graph manifolds without taut foliations, with their Seifert fibered pieces
and gluing homeomorphims.

\begin{Q}
Let $M$ be a graph $3$-manifold.
What kind of obstructions are there for $M$ not to admit a taut foliation~?
\end{Q}

%%%%%%%%%% Section  %%%%%%%
\section{Preliminaries}\label{prel}
We may recall here, that all $3$-manifolds are considered compact, connected and orientable.
This section is devoted to recall basic definitions and notations on Seifert fibered $3$-manifolds,
taut or horizontal foliations and well known results.

\medskip\noindent
{\bf Notations}
Let $M$ be a $3$-manifold.
If $M$ is an integral homology sphere, resp. a rational homology sphere,
we say that $M$ is a \ZHS, resp. a \QHS.
Clearly, a \ZHS\ is a \QHS.
If $M$ is a  \ZHS, resp. a \QHS, and a Seifert fibered $3$-manifold,
we say that $M$ is {\it a \ZHS, resp. a \QHS, Seifert fibered $3$-manifold}.

\medskip\noindent
{\bf Separating surfaces and non-separating surfaces.}
A properly embedded surface $F$ in a $3$-manifold $M$ is  said to be 
{\it a separating surface} if $M-F$ is not connected;
otherwise, $F$ is said to be {\it a non-separating surface} in $M$.
If $F$ is a separating surface, we call {\it the sides of $F$} the connected components of $M-F$.
Note that if $M$ is a \QHS\ manifold, then $M$ does not contain any non-separating surface.

A $3$-manifold is said to be {\it reducible} if $M$ contains an {\it essential $2$-sphere},
i.e. a $2$-sphere which does not bound any $3$-ball in $M$.
Then, either $M$ is homeomorphic to $\mathbb{S}^1\times\mathbb{S}^2$, or $M$ is a non-trivial connected sum.
If $M$ is not a reducible $3$-manifold, we say that $M$ is an {\it irreducible} $3$-manifold.
We may note that all Seifert fibered $3$-manifolds but
$\mathbb{S}^1\times\mathbb{S}^2$ and $\mathbb{R}P^3\#\mathbb{R}P^3$ are irreducible $3$-manifolds.

\medskip\noindent
{\bf Seifert fibered $3$-manifolds.}
We can find the first definition of Seifert fibered $3$-manifolds, called {\it fibered spaces} 
by H.~Seifert, in \cite{ST}. We first consider fibered solid tori.

The standard solid torus $V$ is said to be {\it $p/q$-fibered},
if $V$ is foliated by circles, such that the core is a leaf, and all the other leaves are circles
isotopic to the $(p,q)$-torus knot
(i.e. they run $p$ times in the meridional direction and 
$q$ times in the longitudinal direction) where $q\not=0$.
A solid torus $W$ is {\it $\mathbb{S}^1$-fibered} if $W$ is foliated by circles,
such that there exists a homeomorphism between $W$ and 
the $p/q$-fibered standard solid torus $V$, which preserves the leaves.
We may say that $W$ is a {\it $p/q$-fibered solid torus}.

A $3$-manifold $M$ is said to be {\it a Seifert fibered} $3$-manifold,
or {\it a Seifert fiber space}
if
$M$ is a disjoint union of simple circles, called {\it the fibers},
such that the regular neighborhood of each fiber is a $\mathbb{S}^1$-fibered solid torus.
Let $W$ be a $p/q$-fibered solid torus. If $q=1$, we say that its core is {\it a regular fiber};
otherwise we say that its core is {\it an exceptional fiber} and $q$ is 
{\it the multiplicity} of the exceptional fiber.

By D.~B.~A.~Epstein \cite{Ep} this is equivalent to say that 
$M$ is a $\mathbb{S}^1$-bundle over a $2$-orbifold. 

\medskip\noindent
{\bf Seifert invariants.}
In \cite{Se}, H. Seifert developped numerical invariants, which give a complete classification
of Seifert fibered $3$-manifolds.
%In all the following, we use the normalised Seifert invariants, see \cite{Ha} for more details.
Let $M$ be a closed Seifert manifold based on an orientable surface of genus $g$, with $n$ exceptional fibers. Let $V_1,\dots,V_n$ be the solid tori, which are
regular neighborhood of each exceptional fiber.
We do not need to consider non-orientable base surface here.
If we remove these solid tori,
we obtain a trivial $\mathbb{S}^1$-bundle over a genus $g$ compact surface,
whose boundary is a union of $2$-tori $T_1,\dots,T_n$;
where $T_i=\partial V_i$, for $i\in\{1,\dots,n\}$.
Gluing back $V_1,\dots,V_n$ consists to
assign to each of them a slope $b_i/a_i$~:
we glue $V_i$ along $T_i$, such that the slope $b_i/a_i$ on $V_i$ bounds a meridian disk of $V_i$.
Formally, if $f$ and $s$ represent respectively a fiber and a section on $T_i$, then
the boundary of the meridian disk of $V_i$ is attached along the slope represented by 
$a_i[s]+b_i[f]$ in $H_1(T_i,\mathbb{Z})$.

Clearly, $a_i\geq 2$ is the multiplicity of the core of $V_i$,
and $b_i$ depends on the choice of a section.
Removing the regular neighborhood of a regular fiber, we obtain 
an integer slope $b_0$.
Then, $g,b_0,b_1/a_1,\dots, b_n/a_n$
completely describe $M$. We denote $M$ by
$M(g; b_0, b_1/a_1, \dots, b_n/a_n)$,
which is called {\it the Seifert invariant}.

\medskip \noindent
{\bf Seifert normalized invariant and convention.}
New sections are obtained by Dehn twistings along the fiber (along annuli or tori);
therefore a new section does not change $b_i$ modulo $a_i$.
Thus, we can fix $b_0$
so that $0<b_i<a_i$ for $i\in\{1,\dots,n\}$.

That gives rise to the {\it Seifert normalized invariant}~:
$M(g; b_0, b_1/a_1, \dots, b_n/a_n)$;
i.e. $0<b_i<a_i$ for $i\in\{1,\dots,n\}$.

H. Seifert \cite{Se} showed that 
$M(g; b_0, b_1/a_1, \dots, b_n/a_n)$ is
fiber-preserving homeomorphic to
$-M(g; -n-b_0, 1-b_1/a_1, \dots, 1-b_n/a_n)$
where $-M$ denotes $M$ with the opposite orientation.
In all the following, we denote by $\Phi$ 
this isomorphism.
Therefore, we may assume that $b_0<0$
otherwise we switch for $-n-b_0$.
For more details, see \cite{Se} or \cite{BNR, Ha}.

Every \QHS\ Seifert fibered $3$-manifold $M$ is based on $\mathbb{S}^2$.
Indeed, every non-separating curve on the base surface
induces a non-separating torus in $M$; which cannot be in a \QHS.
Hence, the base surface of a \QHS\ Seifert fibered $3$-manifold is a $2$-sphere.

From now on,
we denote for convenience such $M$ by $M(-b_0, b_1/a_1, \dots, b_n/a_n)$,
where $b_0>0$ and $0<b_i<a_i$ for $i\in\{1,\dots,n\}$. We will write~:
$$M=M(-b_0, b_1/a_1, \dots, b_n/a_n).$$

\medskip \noindent
{\bf Euler number.}
When $M$ has a unique fibration, we denote by $e(M)$
{\it the Euler number of its fibration}.
Note that few Seifert $3$-manifolds (lens spaces and a finite number of others)
do not have a unique fibration, see \cite{Ha} for more details;
all of them but lens spaces and $\mathbb{S}^3$, are not homology $3$-spheres.
$$\dps e(M)=-b_0+\sum_{i=1}^{n} b_i/a_i.$$

\medskip \noindent
{\bf Taut foliations.} Let $M$ be a $3$-manifold and $\mathcal{F}$ a foliation of $M$.
A simple closed curve $\gamma$
(respectively, a properly embedded simple arc, when $\partial M\not=\emptyset$)
is called {\it a transverse loop}
(respectively a {\it transverse arc})
if $\gamma$ is transverse to $\mathcal{F}$,
i.e. $\gamma$ is transverse to every leaf $F\in\mathcal{F}$, such that $\gamma\cap F\not=\emptyset$.

We say that a foliation $\mathcal{F}$ is \textit{taut},
if for every leaf of $F$ of $\mathcal{F}$, there exists 
a transverse loop, or a transverse arc if $\partial M\not=\emptyset$, $\gamma$ say,
such that $\gamma\cap F\not=\emptyset$.

We end this part by the famous theorem of Gabai \cite{Ga} on the existence of taut foliations,
which is stated here for closed $3$-manifolds.

\begin{THM}[D. Gabai, \cite{Ga}]
Let $M$ be a closed $3$-manifold.
If $H_2(M; \mathbb{Q})$ is non-trivial then $M$ admits a taut foliation.
\end{THM}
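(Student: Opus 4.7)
The plan is to follow Gabai's sutured manifold approach. Since $H_2(M;\mathbb{Q})$ is non-trivial, the universal coefficient theorem gives that $H_2(M;\mathbb{Z})$ has positive rank, and any non-zero integral class is represented by an embedded oriented surface (by Poincaré duality, $H^1(M;\mathbb{Z})\cong [M,\mathbb{S}^1]$, and a regular preimage of a point under a map $M\to \mathbb{S}^1$ representing the dual class is an embedded oriented surface Poincaré-dual to it). The first step is to pick such a surface $S$ which is \emph{Thurston norm-minimizing} in its homology class~; this forces $S$ to be incompressible and well-behaved, and is the homological input that launches the whole argument.

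Next I would cut $M$ open along $S$ to obtain a sutured manifold $(N_1,\gamma_1)$ in the sense of Gabai, which inherits \emph{tautness} from the norm-minimality of $S$. The core of the argument is the iterative construction of a \emph{taut sutured manifold hierarchy}~: at stage $i$ one chooses a conditioned norm-minimizing surface $S_i$ in $(N_i,\gamma_i)$ whose complement defines a strictly simpler taut sutured manifold $(N_{i+1},\gamma_{i+1})$. Gabai's fundamental existence and termination theorem guarantees that this can be done in finitely many steps and that the final sutured manifold is a disjoint union of product pieces $D^2\times I$.

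Then I would build the foliation by reverse induction along the hierarchy. The terminal product pieces carry the obvious horizontal foliation by disks $D^2\times\{t\}$, which is trivially taut. At each backward step one glues across the decomposing surface, arranging the foliation so that it meets the suture structure compatibly and remains transverse to a preferred system of loops; Gabai's procedure ensures the extension can be performed, and at the end of the reverse induction produces a transversely oriented foliation $\mathcal{F}$ on $M$ in which the initial surface $S$ is a union of leaves. Tautness of $\mathcal{F}$ is inherited step by step~: transverse arcs built inside the product pieces close up through the successive gluings into transverse loops meeting every leaf.

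The main obstacle, and the technical heart of the argument, lies not in the homological input but in the two intertwined pieces of sutured manifold theory~: first, proving that a taut hierarchy exists and terminates in products, which requires a well-chosen complexity function on sutured manifolds that strictly decreases under admissible decompositions~; second, showing that the foliation can actually be extended across each decomposition while preserving tautness and transverse orientability, which requires delicate control near the sutures, typically carried out via branched surfaces or explicit spines carrying the foliation. Everything else in the argument is essentially bookkeeping around these two deep steps.
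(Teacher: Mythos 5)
The paper does not prove this statement at all: it is quoted verbatim as Gabai's theorem and referred to \cite{Ga}, so there is no internal proof to compare against. Your outline is, in substance, a faithful roadmap of Gabai's original argument: a nonzero class in $H_2(M;\mathbb{Q})$ gives a nonzero integral class represented (via Poincar\'e duality and a map to $\mathbb{S}^1$) by an embedded oriented surface, one takes it Thurston norm-minimizing, builds a taut sutured manifold hierarchy, and then constructs the transversely oriented taut foliation by reverse induction, with the initial surface a compact leaf. Two caveats. First, the two steps you correctly identify as the heart of the matter --- existence and termination of taut hierarchies, and the extension of foliations across each decomposition while keeping tautness --- are precisely Gabai's deep theorems (the hierarchy existence theorem and the foliation construction theorem), so your text is an accurate \emph{sketch} that defers to that machinery rather than a self-contained proof; given that the statement is attributed to Gabai, this is the appropriate level. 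Second, a small inaccuracy: the hierarchy terminates in product sutured manifolds of the form $(R\times I,\partial R\times I)$ for compact surfaces $R$, not necessarily $D^2\times I$; the terminal foliation is by the horizontal slices $R\times\{t\}$, and the rest of your description goes through unchanged. Note also that Gabai's conclusion in the closed case is slightly stronger than what the paper quotes (smoothness away from possible toral leaves), but nothing in the paper uses that refinement.
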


\medskip \noindent
{\bf Horizontal and vertical foliations.}
Let $M$ be a Seifert fibered $3$-manifold and $\mathcal{F}$ a foliation of $M$.
We say that $\mathcal{F}$ is {\it horizontal} if each
$\mathbb{S}^1$-fiber is a transverse loop to $\mathcal{F}$.
We say that $\mathcal{F}$ is {\it vertical} if each leaf of $\mathcal{F}$ is $\mathbb{S}^1$-fibered,
i.e. a disjoint union of $\mathbb{S}^1$-fibers.

Note that
only Seifert fibered $3$-manifolds are concerned by horizontal or vertical foliations.
Horizontal foliations are sometimes just called \textit{transverse foliations} to underline the fact that 
horizontal foliations are transverse to the $\mathbb{S}^1$-fibers.

Clearly, horizontal foliations are taut, because any transverse fiber 
(meeting a leaf) is the required transverse loop; so we have the following result.
\begin{LM}\label{easy}
A horizontal foliation is taut.
\end{LM}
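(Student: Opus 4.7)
The plan is to unwind the two definitions and observe that they fit together trivially. Suppose $\mathcal{F}$ is a horizontal foliation of the Seifert fibered $3$-manifold $M$. By the definition recalled just above, this means that every $\mathbb{S}^1$-fiber of the Seifert fibration of $M$ is a transverse loop to $\mathcal{F}$, i.e.\ is transverse to each leaf it meets.

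To verify the tautness condition, I would fix an arbitrary leaf $F$ of $\mathcal{F}$ and exhibit a transverse loop meeting $F$. Since leaves are nonempty, pick any point $p \in F$. As $M$ is Seifert fibered, every point of $M$ lies on some fiber; let $\gamma$ denote the $\mathbb{S}^1$-fiber through $p$. Then $\gamma$ is a transverse loop to $\mathcal{F}$ by the horizontality hypothesis, and $p \in \gamma \cap F$, so $\gamma \cap F \neq \emptyset$. This is exactly the condition for tautness at the leaf $F$, and since $F$ was arbitrary, $\mathcal{F}$ is taut.

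There is essentially no obstacle here, which is presumably why the statement is called a lemma and prefaced with \emph{clearly} in the text: the only mild point to notice is that the definition of \emph{transverse loop} in the paper requires transversality only at points where $\gamma$ actually meets a leaf, so one does not need to worry about fibers that might somehow fail to be transverse globally. The fiber through $p$ meets $F$ at $p$ and is transverse there, which suffices.
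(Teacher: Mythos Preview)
Your proof is correct and follows exactly the same idea as the paper, which simply remarks that any fiber through a leaf serves as the required transverse loop. You have merely spelled out in detail what the paper dismisses with the word \emph{clearly}.
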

%%%%%%%%%%%%%
%%%%%%%%%%%%%
\section{Horizontal and taut \c2-foliations in Seifert fibered homology $3$-spheres}
This section is devoted to the proof of Theorem~\ref{eq_taut_hor};
then with Lemma~\ref{easy}, we obtain~:
%%%%%%%%%%%%%%%%%%%%
\begin{COR}\label{equiv cor}
Let $M$ be a \QHS\ Seifert fibered $3$-manifold.
Let $n$ be the number of exceptional fibers of $M$.
If $n>3$ (resp. $n=3$)
then, $M$ admits a horizontal foliation if and only if $M$ admits a taut \c2-foliation
(resp. a \C0-foliation).
\end{COR}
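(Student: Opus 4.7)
The plan is to derive the corollary as a direct logical consequence of Theorem~\ref{eq_taut_hor} and Lemma~\ref{easy}, which together yield both implications. No further machinery is needed, but the proof must be assembled with care regarding which regularity class is being invoked in each case.

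For the implication ``horizontal $\Rightarrow$ taut'', I would simply invoke Lemma~\ref{easy}: a horizontal foliation $\mathcal{F}$ of a Seifert fibered $3$-manifold is automatically taut, since any $\mathbb{S}^1$-fiber is a transverse loop meeting every leaf of $\mathcal{F}$. Since horizontal foliations can be taken smooth (even analytic), such an $\mathcal{F}$ is in particular a taut \c2-foliation, and a fortiori a taut \C0-foliation. This covers the ``only if'' direction in both the $n>3$ and the $n=3$ cases of the corollary simultaneously.

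For the converse, I would apply Theorem~\ref{eq_taut_hor} directly. Assume first that $n>3$ and that $M$ admits a taut \c2-foliation $\mathcal{F}$; the theorem then provides an isotopy taking $\mathcal{F}$ to a horizontal foliation, so $M$ admits a horizontal foliation. If instead $n=3$, the same reasoning applies with ``\c2'' replaced by ``\C0'', which is precisely the $n=3$ conclusion of Theorem~\ref{eq_taut_hor}; a taut \C0-foliation is isotoped to a horizontal one, and again $M$ admits a horizontal foliation.

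The substantive content has been entirely absorbed into Theorem~\ref{eq_taut_hor}, so there is no genuine obstacle left at the corollary level. The only point requiring attention is to match the regularity threshold correctly in each case (\c2 when $n>3$, \C0 when $n=3$) so that the equivalence is stated with the strongest hypothesis available in one direction and the weakest conclusion needed in the other.
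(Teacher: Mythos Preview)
Your proof is correct and follows exactly the paper's approach: the corollary is presented there as an immediate consequence of Theorem~\ref{eq_taut_hor} together with Lemma~\ref{easy}. Your extra remark about upgrading a horizontal foliation to a smooth (analytic) one is a useful clarification for the regularity bookkeeping, and is justified in the paper by Theorem~\ref{ehn}(3).
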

%%%%%%%%%%%%%%%%
There exists an alternative proof  (but not direct) of this corollary; see at the end of this section.
\noindent
\\\\
{\sc Proof of Theorem~\ref{eq_taut_hor}.}
\\
In the light of known results on foliations \cite{B2, EHN, Le, Ma, No, Th} of
M.~Brittenham, 
D.~Eisenbud, 
U.~Hirsch, 
G.~Levitt, 
S.~Matsumoto, 
W.~Neumann, 
S.~P.~Novikov and W.~P.~Thurston
(where Theorem~\ref{cpt_iso} is their collection)
it is sufficient to see that any taut foliation on a $\mathbb{Q}$HS Seifert fibered $3$-manifold,
has no compact leaf.
Then the result follows by Corollary~\ref{cor cpct leaf},
which claims that no leaf in 
a taut foliation of a \QHS\ can be compact.
\qed
\\

Corollary~\ref{cor cpct leaf} is an immediate consequence of Proposition~\ref{sep_torus},
which concerns all (compact, oriented and connected) closed $3$-manifolds,
and can be generalized with some boundary conditions to $3$-manifolds with 
non-empty boundary, see \cite{Gi2}.

 \begin{PR}\label{sep_torus}
A transversely oriented and taut foliation of a closed $3$-manifold,
cannot contain a compact separating leaf.
\end{PR}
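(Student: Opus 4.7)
The plan is to argue by contradiction. Suppose $\mathcal{F}$ is a transversely oriented taut foliation of the closed $3$-manifold $M$ and that $F\in\mathcal{F}$ is a compact separating leaf. The strategy is to combine two elementary ingredients: tautness produces a closed transversal meeting $F$, while the transverse orientation forces every intersection of that transversal with $F$ to carry the same sign, and this is incompatible with $[F]=0$ in $H_2(M;\mathbb{Z})$.

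First I would use the tautness of $\mathcal{F}$ to pick a transverse loop $\gamma$ with $\gamma\cap F\neq\emptyset$. Next I would orient $\gamma$ coherently with the transverse orientation of $\mathcal{F}$. Because $\mathcal{F}$ is transversely oriented, its normal line bundle carries a nowhere-zero continuous section $\nu$; because $\gamma$ is everywhere transverse to the tangent distribution of $\mathcal{F}$, the component of $\gamma'(t)$ along $\nu$ is a continuous nowhere-vanishing function on the connected loop $\gamma$, hence has constant sign. Consequently every intersection of $\gamma$ with the leaf $F$ contributes with the same sign to the algebraic intersection number, giving
\[
 \lvert \gamma\cdot F\rvert \;=\; \#(\gamma\cap F)\;\geq\; 1,
\]
so in particular $\gamma\cdot F\neq 0$.

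On the other hand, since $F$ is separating we may write $F=\partial M_{+}$ where $M_{+}$ is one of the two sides of $F$ in $M$; hence $[F]=0$ in $H_{2}(M;\mathbb{Z})$. The homological invariance of the intersection pairing then forces $[\gamma]\cdot[F]=0$, contradicting the previous step. This contradiction proves the proposition.

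The main subtlety I anticipate is the regularity of $\mathcal{F}$, since the results of the paper are stated for \c2- and even \C0-foliations. The transverse orientation must then be read as a continuous orientation of the normal line bundle, and the local sign of a transverse crossing should be defined in a foliation chart rather than via a smooth transverse vector field. Once this is formalized, the ``constant sign along $\gamma$'' step remains valid by connectedness of $\gamma$, and the rest of the argument is the classical homological intersection computation.
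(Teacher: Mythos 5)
Your proof is correct, but it follows a genuinely different route from the paper. You invoke the classical homological argument: tautness gives a closed transversal $\gamma$ through $F$; the transverse orientation forces all points of $\gamma\cap F$ to carry the same local sign, so the algebraic intersection number $[\gamma]\cdot[F]$ is nonzero; yet $F$ separating means $F=\partial M_+$, hence $[F]=0$ in $H_2(M;\mathbb{Z})$ and the pairing must vanish. (Implicitly you also use that $M$ orientable plus transverse orientability makes $F$ orientable, so $[F]$ is defined — worth stating, but harmless under the paper's standing orientability hypothesis.) The paper instead cuts $M$ along $F$, takes the closure $M_1$ of one side, and shows that no properly embedded arc in $M_1$ can be transverse to the restricted foliation: it introduces the continuous angle function $h(t)$ between the arc's tangent vector and the leaf normal, observes via the transverse orientation that the normal points to the same side of $F$ at both endpoints while the arc's tangent points to opposite sides, and applies the Intermediate Value Theorem to produce a tangency; hence the restriction (and so $\mathcal{F}$) cannot be taut. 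Your approach buys brevity and rests on standard intersection theory, and it is the classical Goodman-style argument that a compact leaf of a taut foliation must be homologically nontrivial; the paper's argument is more elementary and local, avoids the intersection pairing altogether, and — as the authors note — adapts to manifolds with nonempty boundary under suitable boundary conditions, where the homological shortcut is less immediate. Both arguments share the same regularity caveat in the \C0\ setting (signs of crossings must be read off in foliation charts rather than from a smooth normal field), which you correctly flag.
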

For example, foliations of $3$-manifolds which admit a Reeb's component are not taut.

A taut foliation $\mathcal{F}$ is said to be {\it transversely oriented} if
there exists a one-dimensional oriented foliation $\mathcal{G}$ transverse to $\mathcal{F}$.
This is equivalent to say that the normal vector field to the tangent planes to the leaves
of $\mathcal{F}$ is continuous (and nowhere vanishes);
which gives the following consequence.
%%%%%%%%%%%%%%%%%%%
\begin{LM}\label{transv or}
Let $\mathcal{F}$  be a transversely oriented foliation.
If $F$ is a separating compact leaf then the continuous normal vector field to the tangent planes to $F$, has
all his vectors pointing the same side of $F$.
\end{LM}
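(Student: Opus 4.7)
The plan is to argue by connectedness of $F$ combined with transversality. Since $\mathcal{F}$ is transversely oriented, there is a continuous one-dimensional oriented foliation $\mathcal{G}$ transverse to $\mathcal{F}$; picking a tangent vector field to $\mathcal{G}$ consistent with its orientation produces a continuous nowhere-vanishing vector field $\vec{v}$ on $M$ that is transverse to every leaf of $\mathcal{F}$, and in particular transverse to $F$ at each of its points.

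Next, I would use the separating hypothesis to define a well-defined notion of ``side'' of $F$. Since $F$ is a compact separating leaf, $M \setminus F$ has exactly two connected components, which I call $M_+$ and $M_-$, with $\partial\overline{M_\pm} = F$. Because $F$ is a properly embedded (two-sided) surface, every point $p\in F$ admits a bicollar neighborhood $U_p\cong F\times(-\varepsilon,\varepsilon)$ in which $U_p\cap M_+ = F\times(0,\varepsilon)$ and $U_p\cap M_- = F\times(-\varepsilon,0)$. Since $\vec{v}(p)$ is transverse to $F$, its image in the normal bundle of $F$ is nonzero, and so its initial segment lies entirely on one of the two sides. Define
$$
\sigma\colon F\longrightarrow\{+,-\},\qquad \sigma(p)=\begin{cases}+ & \text{if }\vec{v}(p)\text{ points into }M_+,\\ -& \text{if }\vec{v}(p)\text{ points into }M_-.\end{cases}
$$

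The key step is to check that $\sigma$ is continuous. In the bicollar $U_p$ above, the second coordinate of the flow of $\vec{v}$ starting at a point of $F$ near $p$ varies continuously with $p$, and transversality of $\vec{v}$ to $F$ ensures that the sign of its derivative at $F$ is locally constant. Hence $\sigma$ is locally constant, and since $F$, being a leaf of a foliation, is connected, $\sigma$ is globally constant. This gives precisely the claim: all vectors of the transverse vector field $\vec{v}$ along $F$ point into the same side of $F$.

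I do not anticipate a serious obstacle: the content is essentially that a nowhere-vanishing continuous section of the (trivial, since $F$ is two-sided) normal line bundle of $F$ must have a constant sign on the connected surface $F$. The only mildly delicate point is justifying that ``pointing into $M_+$'' depends continuously on $p\in F$, which is handled by the bicollar argument sketched above.
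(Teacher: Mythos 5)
Your argument is correct and is essentially the paper's (implicit) one: the paper treats the lemma as an immediate consequence of the continuity and non-vanishing of the transverse vector field along the two-sided leaf, i.e.\ exactly your ``locally constant side plus connectedness of $F$'' argument. The only cosmetic difference is that you use the tangent field of the transverse one-dimensional foliation $\mathcal{G}$ rather than the normal field to the tangent planes named in the statement; since these two fields point into the same side of $F$ at every point, this changes nothing.
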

%%%%%%%%%%%%%%%%%%%%
Actually, only this property is used in the proof of Proposition~\ref{sep_torus}.
We wonder if taut foliations are transversely oriented, and vice-versa.
In fact, there exist taut foliations which are not transversely oriented,
see \cite{Gi2} for more details. The inverse is easy to construct, e.g a Reeb's component.
We may note that there exist also foliations  without non-orientable compact leaves,
which are neither taut nor transversely oriented.
%%%%%%%%%%%%%%%%%%%
\begin{LM}\label{taut imply transv or if}
Let $\mathcal{F}$ be a taut foliation.
If $\mathcal{F}$ does not contain a non-orientable surface
then $\mathcal{F}$ is a transversely oriented foliation.
\end{LM}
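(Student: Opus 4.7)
The plan is to translate the conclusion into a Stiefel--Whitney-class computation and exploit the orientability hypotheses on both $M$ and the leaves of $\mathcal{F}$. I would first reformulate: $\mathcal{F}$ is transversely oriented if and only if its normal line bundle $\nu = TM/T\mathcal{F}$ is trivial, equivalently $w_1(\nu) = 0 \in H^1(M;\mathbb{Z}/2)$. Because $M$ is orientable, the Whitney sum formula applied to $TM = T\mathcal{F}\oplus\nu$ gives $0 = w_1(TM) = w_1(T\mathcal{F}) + w_1(\nu)$, so it is enough to show $w_1(T\mathcal{F}) = 0$.

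Next, for any leaf $L$ one has $T\mathcal{F}|_L = TL$, hence $w_1(T\mathcal{F})|_L = w_1(TL)$, which vanishes exactly when $L$ is orientable. By hypothesis every leaf is orientable, so $w_1(T\mathcal{F})$ is killed on each leaf. To lift this leafwise vanishing to a global statement I would pair $w_1(T\mathcal{F})$ against an arbitrary loop $\gamma \subset M$ and use general position together with tautness: via a $1$-dimensional line field tangent to the transversals supplied by tautness, $\gamma$ can be homotoped into a finite concatenation of subarcs each of which is either transverse to $\mathcal{F}$ or contained in a single leaf. On a transverse subarc, $\gamma'$ itself is a nonvanishing section of $\nu$ along the arc; on a leaf subarc, two-sidedness (equivalent to orientability, given that $M$ is orientable) of the enclosing leaf trivializes $\nu$. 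Piecing these trivializations together along $\gamma$ yields $w_1(\nu)[\gamma] = 0$, and since $\gamma$ was arbitrary one obtains $w_1(\nu) = 0$.

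The principal obstacle is precisely this last patching step: showing that the local trivializations on the transverse and leaf subarcs glue coherently at the points where $\gamma$ switches between the two regimes, so that the telescoping cocycle associated to a foliated atlas actually splits. An equivalent, and perhaps cleaner, formulation is via the orientation double cover $\pi\colon \widetilde M \to M$ of $\nu$; the orientability of every leaf of $\mathcal{F}$ forces each leaf to have two disjoint lifts in $\widetilde M$, and tautness provides closed transversals which also lift to disjoint pairs, from which one concludes that $\widetilde M$ is disconnected and hence $\pi$ is a trivial cover, i.e.\ $\nu$ is orientable. Carrying out this coherence check rigorously is the heart of the argument.
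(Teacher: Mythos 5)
Your reduction to $w_1(\nu)=0$ (via $w_1(TM)=0$ and the Whitney sum formula) and the remark that $w_1(T\mathcal{F})$ restricts to zero on every leaf are fine, but they only restate the lemma; the whole content lies in the step you yourself flag as ``the principal obstacle'' and never carry out. Knowing that $\nu$ is trivial over every loop contained in a leaf (orientability, hence two-sidedness, of leaves in the orientable $M$) and over every closed transversal (tautness) does not give $w_1(\nu)=0$, because $\pi_1(M)$ is not generated by loops of these two kinds: a general loop is a concatenation of leafwise and transverse arcs, and the vanishing of $w_1(\nu)$ on such mixed loops is precisely what must be proved. For the same reason the double-cover variant is a non sequitur as stated: the cover $\widetilde M\to M$ determined by $w_1(\nu)$ can perfectly well be connected even though every leaf and every closed transversal lifts to two disjoint copies --- disconnectedness of $\widetilde M$ is \emph{equivalent} to the vanishing of $w_1(\nu)$ on all of $\pi_1(M)$, i.e.\ to the conclusion, and does not follow from leafwise and transversalwise lifting. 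Concretely, if $w_1(\nu)\neq 0$ then $\widetilde M$ is connected, the pulled-back foliation is co-oriented, and the deck involution $\tau$ preserves the orientation of $\widetilde M$ while reversing the co-orientation; your hypotheses only say that $\tau$ sends each lifted leaf to a disjoint lifted leaf, which is perfectly compatible with connectedness. What would actually close the argument is showing that $\tau$ must leave some leaf $\hat L$ invariant (then $\hat L/\tau$ would be a non-orientable leaf of $\mathcal{F}$, contradicting the hypothesis), and it is exactly there that tautness would have to be used in an essential way; your sketch contains no such step.

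By way of comparison, the paper does not argue through characteristic classes or covers at all: it constructs the transverse orientation directly, using orientability (hence two-sidedness) of the leaves to define a normal vector field leaf by leaf, and invoking tautness to choose this field continuously and nowhere vanishing. However terse that construction is, it is a different route from yours, and your write-up neither supplies the patching/coherence step your own plan requires nor a valid substitute for it, while the one concrete mechanism you do propose (disjoint lifts of leaves and transversals force a disconnected cover) is incorrect as stated. So the proposal has a genuine gap at its central point.
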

\begin{proof}
Let $\mathcal{F}$ be a foliation.
If all the leaves are orientable then 
we can consider a well defined normal vector field to the tangent planes of $\mathcal{F}$.
Moreover,
if the foliation is taut then we can choose a well defined and continuous vector field
(and nowhere vanishing).
Therefore, $\mathcal{F}$ is a transversely oriented foliation.
\end{proof}
%%%%%%%%%%%%
Since a \QHS\ cannot contain a non-orientable surface neither a non-separating surface,
Proposition~\ref{sep_torus} gives immediately the following result.
%%%%%%%%%%%%%%%%
\begin{COR}\label{cor cpct leaf}
A taut foliation of a \QHS\ cannot admit a compact leaf.
\end{COR}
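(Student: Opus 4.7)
The plan is to assemble Corollary~\ref{cor cpct leaf} directly from the two ingredients already in place, namely Proposition~\ref{sep_torus} and Lemma~\ref{taut imply transv or if}, by exploiting the two topological restrictions that a \QHS\ enjoys: it contains neither a non-orientable closed surface nor a non-separating closed surface.

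First, I would argue by contradiction: let $\mathcal{F}$ be a taut foliation of a \QHS\ $M$, and suppose $F\in\mathcal{F}$ is a compact leaf. Since $F$ is a closed surface embedded in $M$, and $M$ is a \QHS, $F$ must be orientable. Applying this to every compact leaf (or more directly invoking that a \QHS\ contains no non-orientable closed surface at all, as recalled just before the corollary), the foliation $\mathcal{F}$ contains no non-orientable leaf. By Lemma~\ref{taut imply transv or if}, $\mathcal{F}$ is therefore transversely oriented.

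Next I would use the second property: since $M$ is a \QHS, every closed surface in $M$ is separating, so in particular the compact leaf $F$ separates $M$. We are now in the exact hypothesis of Proposition~\ref{sep_torus}: a transversely oriented taut foliation of a closed $3$-manifold possessing a compact separating leaf. That proposition forbids this situation, which is the desired contradiction.

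There is really no obstacle here, since both topological facts about \QHS's are standard (and explicitly invoked in the paragraph preceding the statement) and the two cited results do the heavy lifting. The only point one should be a little careful about is to note that the non-orientability restriction and the separating restriction together apply to \emph{every} possible compact leaf of $\mathcal{F}$, not merely to the one that might a priori witness a failure; this is why Lemma~\ref{taut imply transv or if} can be used to upgrade $\mathcal{F}$ to a transversely oriented foliation before invoking Proposition~\ref{sep_torus}.
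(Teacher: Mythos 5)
Your argument is correct and is essentially the paper's own: the paper deduces the corollary in one line from the facts that a \QHS\ contains no non-orientable and no non-separating surface, implicitly combining Lemma~\ref{taut imply transv or if} (to get transverse orientability) with Proposition~\ref{sep_torus}, exactly as you spell out. No further comment is needed.
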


\medskip\noindent
{\it Proof of Proposition~\ref{sep_torus}}
\\
Let $M$ be a closed $3$-manifold.
Let $\mathcal{F}$ be a taut foliation.
We proceed by contradiction. So, we assume that
$\mathcal{F}$ contains a compact separating leaf, $F$ say.
We will see that $\mathcal{F}$ cannot be taut; which is the required contradiction.
We follow here a M. Brittenham's argument in his notes \cite{B1} concerning the non-tatness of a Reeb's component.

Since $F$ is separating, $M-F$ contains two components.
Let $M_1$ be the closure of one component, and 
$\mathcal{F}_1= \mathcal{F}\cap M_1$.
Let $\gamma$ be a properly embedded arc in $M_1$.
We will show that $\gamma$ cannot be transverse to $\mathcal{F}_1$;
therefore $\mathcal{F}_1$ cannot be taut and $\mathcal{F}$ neither.

%%%%%%%%%%%
\begin{figure}[htbp]
\begin{center}
\includegraphics[width=.9\linewidth]{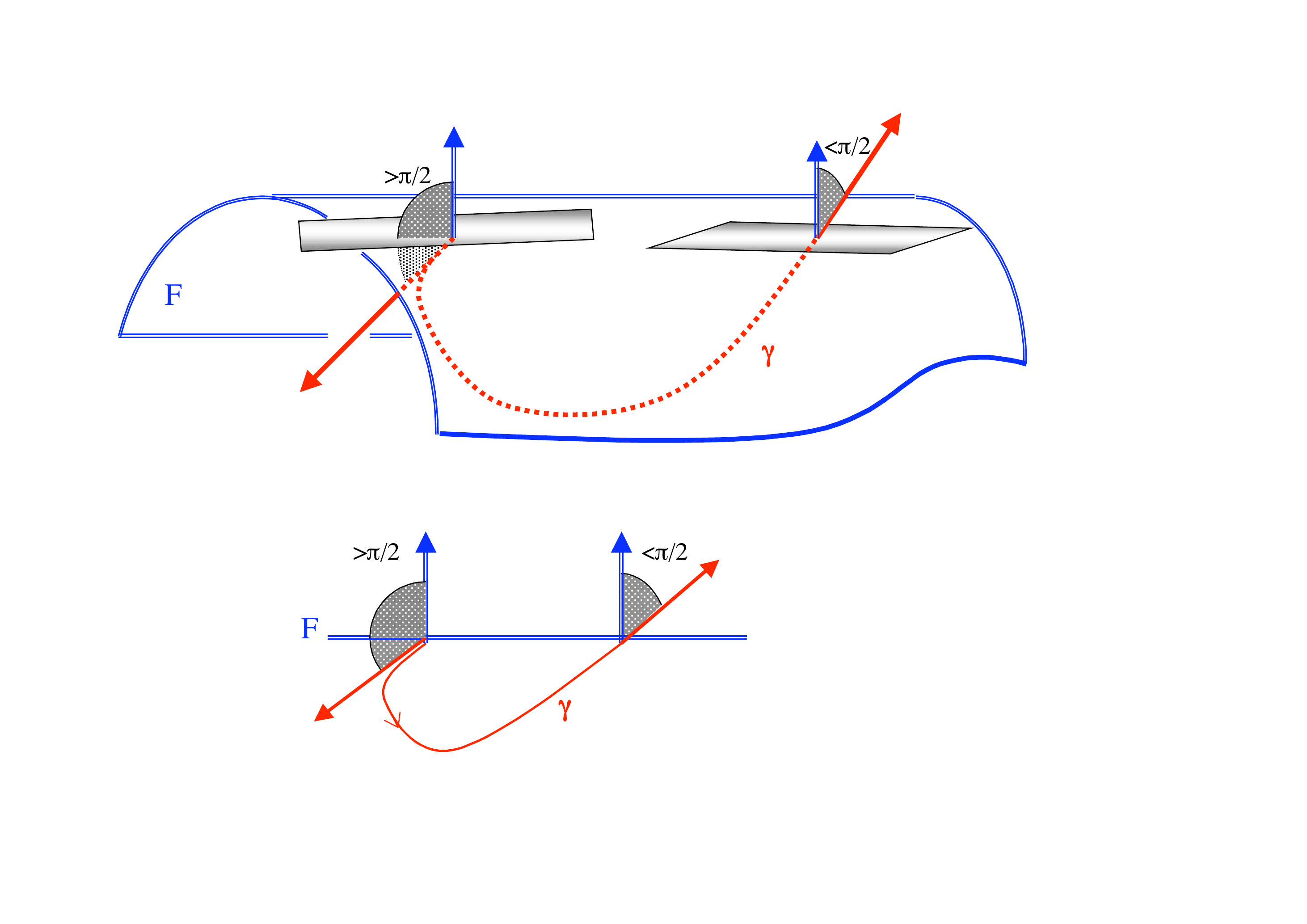}
\caption{}
\label{fig1}
\end{center}
\end{figure}
%%%%%%%%%

Note that $\gamma$ is a continuous map from $[0,1]$ to $M_1$.
We consider the following map $\dps h~: [0,1]\rightarrow  [-\frac{\pi}2,\frac{\pi}2]$ defined as follows~:

\[
\left.
\begin{array}{cccl}
h~: \hfill& [0,1] & \rightarrow & \dps  [-\frac{\pi}2,\frac{\pi}2] \hfill  \\
\\
 & t & \mapsto &\dps h(t) =
 \widehat{\big(\Vec{v_{\gamma(t)}},\Vec{n_{\gamma(t)}}\big)}-\frac{\pi}2
 \end{array}
\right.
\]

where $\widehat{\big(\Vec{v_{\gamma(t)}},\Vec{n_{\gamma(t)}}\big)}$ is 
the non-oriented angle between the tangent vector to $\gamma(t)$,
denoted by $\Vec{v_{\gamma(t)}}$,
and the normal vector to the tangent plane to the leaf of $\gamma(t)$,
denoted by $\Vec{n_{\gamma(t)}}$.
Since $M$ is a Rieman $3$-manifold, $h$ is trivially well defined.
Since the vector fields are continuous, $h$ is a continuous map.

We may note that $\widehat{\big(\Vec{v_{\gamma(t)}},\Vec{n_{\gamma(t)}}\big)}\in[0,\pi]$ by definition.
Note also that $h(t_0)=0$ if and only if $\gamma$ is tangent to the leaf at $\gamma(t_0)$.
We will prove that there exists such a $t_0$; so $\gamma$ is not 
transverse to $\mathcal{F}_1$.
 \\
Since $h$ is a continuous map,
by the Intermediate Value Theorem, it is sufficient to see that
$h(0)h(1)<0$; i.e. if $h(0)<0$ then $h(1)>0$ and vice-versa.
\\
We may assume that $h(0)h(1)\not=0$; otherwise $t_0\in\{0,1\}$.
%We may choose a normal orientation to $F$, such that $h(0)>0$.
Since $F$ is oriented, and $\mathcal{F}$ is transversely oriented, 
by Lemma~\ref{transv or}, the normal vectors
$\Vec{n_{\gamma(0)}}$ and $\Vec{n_{\gamma(1)}}$ both point outside $M_1$,
or both point inside $M_1$ (and $n_{\gamma(t)}$ never vanishes).
\\
On the other hand,
since $\Vec{v_{\gamma(t)}}$ never vanishes,
and $\gamma$ is properly embedded in $M_1$,
the vectors 
$\Vec{v_{\gamma(0)}}$ and $\Vec{v_{\gamma(1)}}$
point in opposite sides of $F$;
i.e. if $\Vec{v_{\gamma(0)}}$ points inside $M_1$ then 
$\Vec{v_{\gamma(1)}}$ points outside $M_1$ and vice-versa.
\\
Therefore (see Figure~1)
either~:

$\Vec{v_{\gamma(0)}}$ and $\Vec{n_{\gamma(0)}}$
both point in the same side of $M_1$, 
and,
$\Vec{v_{\gamma(1)}}$ and $\Vec{n_{\gamma(1)}}$
point opposite sides of $M_1$;
\\
or

$\Vec{v_{\gamma(1)}}$ and $\Vec{n_{\gamma(1)}}$
both point in the same side of $M_1$, 
and, 
$\Vec{v_{\gamma(0)}}$ and $\Vec{n_{\gamma(0)}}$
point in opposite sides of $M_1$.
\\

The former case means $h(0)<0$ and $h(1)>0$,
while the later case means the inverse.
Therefore $h(0)h(1)<0$.

\qed(Proposition~\ref{sep_torus})
%%%%%%%%%
\begin{THM}
[collection of all \cite{B2, EHN, Le, Ma, No, Th}]
\label{cpt_iso}
Let $M$ be a \QHS\ Seifert fibered $3$-manifold, 
with $n$ exeptional exceptional fibers (where $n\geq3$).
We assume that $M$ admits a taut \C0-foliation $\mathcal{F}$.
Moreover, if $n>3$, we suppose that $\mathcal{F}$ is a \c2-foliation of $M$.

If $\mathcal{F}$ does not have a compact leaf,
then $\mathcal{F}$ can be isotoped to be a horizontal foliation.
\end{THM}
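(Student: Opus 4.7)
The plan is to assemble the statement from three strands: Novikov's theorem on taut foliations in irreducible 3-manifolds; Brittenham's splitting theorem for essential foliations in Seifert fibered spaces (in its \c2-form for $n>3$ and \C0-form for $n=3$); and a direct exclusion of the vertical alternative arising from the presence of exceptional fibers.

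First I would verify that $\mathcal{F}$ is essential. Since $M$ is a \QHS\ Seifert fibered manifold with $n\geq 3$ exceptional fibers, it is neither $\mathbb{S}^1\times\mathbb{S}^2$ nor $\mathbb{R}P^3\#\mathbb{R}P^3$, hence irreducible (as recorded in Section~\ref{prel}). Novikov's theorem then gives that the taut foliation $\mathcal{F}$ contains no Reeb component and that every leaf is incompressible. Next I would invoke Brittenham's theorem: such an $\mathcal{F}$ can be ambient-isotoped so that, relative to some finite (possibly empty) collection $\mathcal{T}$ of disjoint saturated vertical tori, the restriction of $\mathcal{F}$ to each component of $M\setminus\mathcal{T}$ is either horizontal or vertical with respect to the Seifert fibration, and each torus of $\mathcal{T}$ is a compact leaf of the isotoped foliation. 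The result is stated in \c2-regularity for general base orbifold, while for base $\mathbb{S}^2(a_1,a_2,a_3)$ Brittenham established the \C0-refinement in \cite{B2}, matching the two regularity cases of the hypothesis.

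The no-compact-leaf assumption on $\mathcal{F}$ then forces $\mathcal{T}=\emptyset$, so the isotoped $\mathcal{F}$ is globally horizontal or globally vertical. To exclude the vertical case, I would argue locally at an exceptional fiber: in a $p/q$-fibered solid torus neighborhood with $q\geq 2$, the only $\mathbb{S}^1$-saturated two-dimensional submanifolds are concentric tori made of $(p,q)$-torus knots, and none of them contains the singular core. The core would therefore have to be a one-dimensional leaf of $\mathcal{F}$, which is impossible in a codimension-one foliation. Since $n\geq 3>0$, this rules out vertical foliations of $M$ outright, leaving $\mathcal{F}$ horizontal.

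I expect the main obstacle to be the precise invocation of Brittenham's splitting with the correct regularity: the \c2-hypothesis for $n>3$ is used to control the transverse one-dimensional foliation along the candidate splitting tori, while the \C0-case at $n=3$ rests on the finer analysis of \cite{B2,Le,Ma} for foliations of circle bundles over a three-cone-point base. Tracing through these dependencies to confirm that the splitting tori in $\mathcal{T}$ are indeed produced as leaves, so that the no-compact-leaf hypothesis is decisive for collapsing $\mathcal{T}$, is the delicate point at which the status of Theorem~\ref{cpt_iso} as a collection of results from \cite{B2, EHN, Le, Ma, No, Th} becomes most apparent.
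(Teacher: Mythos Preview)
Your proposal differs from the paper's treatment in that the paper does not attempt a proof at all: Theorem~\ref{cpt_iso} is explicitly labeled a ``collection'' of results from the literature, and Remark~\ref{history} simply records which reference covers which case (Thurston and Levitt for circle bundles, Eisenbud--Hirsch--Neumann for base $\neq\mathbb{S}^2$, Matsumoto for base $\mathbb{S}^2$ with $n>3$ in regularity $\mathcal{C}^2$, Brittenham for $n=3$ in regularity $\mathcal{C}^0$). Your unified sketch via a Brittenham-style horizontal/vertical splitting is more ambitious and broadly captures the spirit of those arguments, but it is not what the paper does, and it also misattributes the $n>3$ case: that is Matsumoto's theorem, not a $\mathcal{C}^2$ form of Brittenham's.

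There is, moreover, a genuine gap in your exclusion of the vertical alternative. Your claim that in a $p/q$-fibered solid torus with $q\geq 2$ ``the only $\mathbb{S}^1$-saturated two-dimensional submanifolds are concentric tori'' fails for $q=2$: the foliation of $D^2\times\mathbb{S}^1$ by $(\text{straight line})\times\mathbb{S}^1$ annuli is invariant under $(z,\theta)\mapsto(-z,\theta+\pi)$, and the quotient carries a saturated M\"obius band through the core. Since \QHS\ Seifert manifolds with several multiplicity-$2$ fibers do occur (e.g.\ the $\mathcal{N}$il examples $M(-b_0,1/2,1/2,1/2,1/2)$ of Proposition~\ref{4}), your local obstruction does not rule out a vertical piece in general. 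More to the point, the theorems actually cited do not pass through a global ``horizontal or vertical'' dichotomy needing a separate exclusion step: in each of \cite{Th,Le,EHN,Ma,B2} the conclusion on the relevant base orbifold is directly that a foliation without compact leaf is isotopic to a horizontal one. Your reformulation of the splitting thus introduces an alternative that the source statements do not have, and then disposes of it by an argument that is not quite correct. The cleanest repair is to follow the paper: invoke each cited theorem on its own domain, as Remark~\ref{history} does, rather than routing everything through a single splitting statement.
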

\begin{RK}[History on Theorem~\ref{cpt_iso}]\label{history}
This theorem has been proved for all Seifert $3$-manifolds which are not
trivial bundles over the $2$-torus. This is a collection of results as follows.
\\
The case of circle bundles over orientable surface, which is not a $2$-torus is due to W.~P.~Thurston in \cite{Th};
it has been completed and extended to non-orientable base surface by G.~Levitt in \cite{Le}. 
\\
In \cite{EHN}, D. Eisenbud, U. Hirsch, and W. Neumann generalized it to Seifert fibered spaces, where the base surface is neither $\mathbb{S}^2$, nor  the $2$-torus with trivial circle bundle.
\\
Later, in \cite{Ma}, S. Matsumoto focused on the case when the base is $\mathbb{S}^2$ with stricly more than $3$ exceptional fibers.
\\
Until there,  the condition of $\mathcal{C}^r$-foliation is necessary, and implies a $\mathcal{C}^r$-isotopy, for each $r\geq 2$.
\\
The last case (the base is $\mathbb{S}^2$ with $3$ exceptional fibers) was solved by M.~Brittenham~\cite{B2}, and the involved techniques are very different, so the author obtained a $\mathcal{C}^0$-isotopy from a $\mathcal{C}^0$-foliation.
\\
We may recall that when there are one or two exceptional fibers with base $\mathbb{S}^2$, there is no foliation without compact leaf by S.~P.~Novikov in \cite{No}.
\end{RK}
 %%%%%
%%%%%%
\noindent
{\sc Alternative proof of Corollary~\ref{equiv cor}}
\\
A proof of Corollary~\ref{equiv cor} has been obtained by combining the results of 
Y.~Eliashberg and W.~P.~Thurston \cite{ET}, M.~Jankins and W.~Neumann \cite{JN},
P.~Lisca and G.~Mati\'c \cite{LM}, P.~Lisca and A.~I.~Stipsicz \cite{LS},
R.~Naimi \cite{N},
and,
P.~Ozsv\'ath and Z.~Szab\'o \cite{OS}, in the following way.
%%%%%%
\begin{THM}[\cite{ET, JN, LM, LS, N, OS}]
Let $M$ be a rational Seifert fibered homology $3$-sphere.
The following statements are equivalent~:
\begin{itemize}
\item[(1)] $M$ is an $L$-space;
\item[(2)] $M$ does not carry a transverse contact structure;
\item[(3)] $M$ does not admit a transverse foliation;
\item[(4)] $M$ does not admit a taut foliation.
\end{itemize}
\end{THM}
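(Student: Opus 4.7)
The plan is to prove the four conditions are equivalent via the cyclic chain of implications $(3)\Rightarrow(4)\Rightarrow(1)\Rightarrow(2)\Rightarrow(3)$, each step coming from one of the cited sources.

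The implication $(3)\Rightarrow(4)$ is immediate from Lemma~\ref{easy}: a transverse foliation is already taut, so nonexistence of a taut foliation forces nonexistence of a transverse one.

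For $(4)\Rightarrow(1)$ I would chain together three inputs. Given a taut foliation $\mathcal{F}$ on $M$, the Eliashberg--Thurston approximation theorem \cite{ET} produces a positive contact structure $\xi$, $C^0$-close to $T\mathcal{F}$, that is weakly symplectically semi-fillable. Lisca--Mati\'c \cite{LM} then ensure that such a $\xi$ on a rational homology sphere has nonvanishing Ozsv\'ath--Szab\'o contact invariant $c(\xi)\in\widehat{HF}(-M)$. Finally, Ozsv\'ath--Szab\'o \cite{OS} show that an $L$-space has trivial reduced Heegaard Floer homology and hence cannot support a contact class satisfying this nonvanishing property; so $M$ being an $L$-space precludes the existence of a taut foliation. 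The parallel argument for $(1)\Rightarrow(2)$ is more direct: a positive transverse contact structure on a Seifert fibered \QHS\ is automatically weakly fillable (via the symplectic structure built in \cite{LS}), so the Lisca--Mati\'c/Ozsv\'ath--Szab\'o argument applies without the Eliashberg--Thurston step.

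The last and hardest step is $(2)\Rightarrow(3)$, and it is here that the combinatorial side enters. Lisca--Stipsicz \cite{LS} classify exactly which Seifert fibered rational homology spheres carry a positive transverse contact structure, in terms of explicit arithmetic conditions on the normalized invariants $(-b_0;b_1/a_1,\dots,b_n/a_n)$. Jankins--Neumann \cite{JN} and Naimi \cite{N} give an independently derived arithmetic criterion for the existence of a horizontal foliation on the same class of manifolds. The key step---and the main obstacle---is the direct verification that the Lisca--Stipsicz conditions agree with the Jankins--Neumann--Naimi conditions; once this arithmetic matching is in hand, the implication follows since a horizontal foliation is by definition transverse. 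I expect this comparison to be the true technical core of the alternative proof, the Heegaard Floer steps being essentially formal applications of the cited theorems.
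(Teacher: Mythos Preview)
Your cycle does not close, because two of your four implications are stated with the wrong orientation.

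For $(3)\Rightarrow(4)$ you argue that ``a transverse foliation is already taut, so nonexistence of a taut foliation forces nonexistence of a transverse one.'' That sentence is correct, but it proves $(4)\Rightarrow(3)$, not $(3)\Rightarrow(4)$. Likewise, your argument under the heading $(4)\Rightarrow(1)$ starts from a taut foliation, produces a fillable contact structure via \cite{ET}, and concludes that ``$M$ being an $L$-space precludes the existence of a taut foliation''---that is the implication $(1)\Rightarrow(4)$, not its converse. Once these two arrows are written in the direction your arguments actually establish, what you have proved is
\[
(1)\Rightarrow(4)\Rightarrow(3),\qquad (1)\Rightarrow(2),\qquad (2)\Leftrightarrow(3),
\]
and nothing in this list returns to $(1)$. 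The equivalence therefore fails.

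The missing step is precisely the one the paper singles out as the substantial input from \cite{LS}: the implication $(2)\Rightarrow(1)$, i.e.\ that a Seifert fibered \QHS\ carrying no transverse contact structure is an $L$-space. This is a Heegaard Floer computation, not an arithmetic matching, and it is not recoverable from the classification statement you quote from \cite{LS}. In the paper's scheme the chain $(3)\Rightarrow(2)\Rightarrow(1)\Rightarrow(4)$ is exactly what supplies the non-trivial direction $(3)\Rightarrow(4)$; without $(2)\Rightarrow(1)$ you cannot get there. Note also two attribution slips: the arithmetic classification of transverse contact structures on Seifert \QHS's is in Lisca--Mati\'c \cite{LM} (not \cite{LS}), and the nonvanishing of the contact invariant for weakly semi-fillable structures is due to Ozsv\'ath--Szab\'o \cite{OS} rather than \cite{LM}. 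Finally, what you call ``the hardest step'' (matching the \cite{LM} and \cite{JN,N} criteria) is in fact routine once both classifications are in hand; the genuinely deep input is the $L$-space computation $(2)\Rightarrow(1)$ of \cite{LS}, which your proposal omits.
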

This theorem is a formulation of \cite[Theorem~1.1]{LS}.
The proof is mainly organized as follows.
\begin{enumerate}
\item[$(1)\Rightarrow (2)$]~: P. Ozsv\'ath and Z. Szab\'o \cite{OS};
\item[$(2) \Rightarrow (3)$]~: Y. Eliashberg and W. P. Thurston \cite{ET};
\item[$(1) \Rightarrow (4)$]~: P. Ozsv\'ath and Z. Szab\'o \cite{OS};
\item[$(4) \Rightarrow (3)$]~: trivial;
\item[$(3) \Rightarrow(2)$]~: M. Jankins and W. Neumann \cite{JN}, P. Lisca and G. Mati\'c \cite{LM},
and R. Naimi \cite{N};
\item[$(2) \Rightarrow(1)$]~: P. Lipsca and A. I. Lipsicz \cite{LS}.
\end{enumerate}
To have $(3) \Rightarrow(4)$, we need to follow~:
$(3) \Rightarrow(2) \Rightarrow (1) \Rightarrow (4)$.
\\

We may underline that the considered taut foliations are actually \c2-foliations, because using contact structure
(see \cite{ET}). Note that there exists a taut \C0-foliation which is not a taut \c2-foliation \cite{BNR}.
%The problem to be \C0 or \c2 concerns mainly the foliation around a torus leaf.
%%%%%%%%%%%%
%%%%%%%%%%%%
\section{Characterization of taut \c2-foliations in Seifert fibered homology $3$-spheres}
The goal of this section is to give a characterization of the existence 
of a taut \c2-foliation in a $\mathbb{Q}$HS Seifert fibered $3$-manifold.
For this, we define the following {\it Property~$(*)$}.
$$
{\it Property}\ (*) \left\{ \begin{array}{ll}
(i)&\dps \frac{b_1}{a_1}<\frac{m-\alpha}{m}\\ 
\\
(ii)&\dps \frac{b_2}{a_2}<\frac{\alpha}{m}\\
\\
(iii)&\dps \frac{b_i}{a_i}<\frac{1}{m}  \textrm{,~for $i\in\{3,\dots,n\}$}
\end{array} \right.
$$
We say that {\it $m$ and $\alpha$ satisfy Property $(*)$ for
$b_1/a_1,b_2/a_2,\dots, b_n/a_n$},
if all the following statements are satisfied~:
\begin{itemize}
\item[-] $m$ and $\alpha$ are two positive integers such that $\alpha < m$;
\item[-] $n\geq 3$ is an integer;
\item[-] $a_i$ and $b_j$ are positive integers for all $(i,j)\in\{1,\dots,n\}^2$,
such that~:

\ \hfil $b_1/a_1\geq b_2/a_2\geq\dots\geq b_n/a_n$;\hfill\ \qquad
\item[-] $(i), (ii)$ and $(iii)$ of Property $(*)$ all are satisfied.
\end{itemize}
When there is no confusion for the $b_i/a_i$'s,
we say for short that {\it $(m,\alpha)$ satisfies Property $(*)$},
or that {\it the integers $\alpha$ and $m$ satisfy Property $(*)$}.
 
For convenience, in the following, 
we denote by $(i), (ii)$ and $(iii)$
respectively, the inequalities $(i), (ii)$ and $(iii)$ of Property $(*)$ above.
\\

Let $M$ be a Seifert fibered $3$-manifold.
In the following, we use the previous notations (see Section 2)
of Seifert normalized invariant~:
$$M=M(-b_0, b_1/a_1, \dots, b_n/a_n)$$
where
$a_i$ and $b_j$ are positive integers for all $(i,j)\in\{1,\dots,n\}\times\{0,\dots,n\}$,
such that $0<b_i<a_i$.
Note that the notations $M=M(-b_0, b_1/a_1, \dots, b_n/a_n)$ suppose that $M$ contains exactly $n$ exceptional fibers~: 
\begin{center}
$a_i\geq 2$, for all $i\in\{1,2,\dots,n\}$.
\end{center}

If $b_0\not\in\{1,n-1\}$ then the existence of a taut \c2-foliation depends uniquely of $b_0$,
as suggests the following theorem. 
\begin{THM}[\cite{EHN, JN, N}]\label{ehn}
Let $n$ be an integer and $M$ be a Seifert manifold based on $\mathbb{S}^2$.
\\
We assume that $n\geq 3$ and that $M=M(-b_0, b_1/a_1, \dots, b_n/a_n)$, where
$a_i$ and $b_j$ are positive integers for all $(i,j)\in\{1,\dots,n\}\times\{0,\dots,n\}$.
Then, all the following statements are satisfied.
\begin{enumerate}
\item
If $2\leq b_0\leq n-2$ then $M$ admits a horizontal foliation.
\item
If $M$ admits a horizontal foliation then $1\leq b_0\leq n-1$.
\item
If $M$ admits a horizontal \C0-foliation, then $M$ admits a horizontal analytic foliation.
\end{enumerate}
\end{THM}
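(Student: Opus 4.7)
The plan is to analyze horizontal foliations on $M$ via their holonomy representations, following Eisenbud--Hirsch--Neumann \cite{EHN} with the refinements of Jankins, Naimi and Neumann \cite{JN,N}. The base orbifold $\mathcal{O}=\mathbb{S}^2(a_1,\ldots,a_n)$ has orbifold fundamental group
\[
\pi_1^{\mathrm{orb}}(\mathcal{O})=\bigl\langle x_1,\ldots,x_n \bigm| x_i^{a_i}=1,\ x_1\cdots x_n=1\bigr\rangle,
\]
and the key reduction is that $M=M(-b_0,b_1/a_1,\ldots,b_n/a_n)$ admits a horizontal foliation if and only if there exists a representation $\rho\colon\pi_1^{\mathrm{orb}}(\mathcal{O})\to\mathrm{Homeo}_+(\mathbb{S}^1)$ in which $\rho(x_i)$ is conjugate to rotation by $b_i/a_i$ modulo $1$, and such that the Euler number of the associated flat $\mathbb{S}^1$-bundle matches $e(M)=-b_0+\sum b_i/a_i$. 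This correspondence reduces all three statements to existence questions for such representations, which can be tested by lifting to $\widetilde{\mathrm{Homeo}}_+(\mathbb{S}^1)$ and comparing rotation numbers of products.

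For $(1)$, choose lifts $\tilde{x}_i\in\widetilde{\mathrm{Homeo}}_+(\mathbb{S}^1)$ of $\rho(x_i)$ with rotation number equal to $b_i/a_i\in(0,1)$; the relation $x_1\cdots x_n=1$ then translates to the requirement that $\tilde{x}_1\cdots\tilde{x}_n$ equal an integer translation $T^{-b_0}$, where the exponent is forced by the Euler-number condition. When $2\leq b_0\leq n-2$ the integer $-b_0$ lies strictly inside the interval of translation exponents achievable by composing $n$ lifts of rotation numbers in $(0,1)$, and one constructs piecewise-linear lifts realizing the prescribed product by an inductive breakpoint argument on the universal cover of $\mathbb{S}^1$. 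Projecting back and suspending over $\mathcal{O}$ yields the desired horizontal foliation.

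For $(2)$, the obstruction is a Milnor--Wood-type inequality for orbifold representations into $\mathrm{Homeo}_+(\mathbb{S}^1)$. Taking canonical lifts of rotation number in $(0,1)$, the sub-additivity of the rotation number on $\widetilde{\mathrm{Homeo}}_+(\mathbb{S}^1)$, together with the strict positivity of each $b_i/a_i$ and the strict inequality $b_i/a_i<1$, forces the integer translation exponent of $\tilde{x}_1\cdots\tilde{x}_n$ to lie in the open interval $(-n,0)$; since this exponent must equal $-b_0$, the bound $1\leq b_0\leq n-1$ follows.

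For $(3)$, the existence of a $\mathcal{C}^0$-horizontal foliation provides a $\mathrm{Homeo}_+(\mathbb{S}^1)$-representation realizing the prescribed Euler number and rotation numbers. Each $\rho(x_i)$ has finite order dividing $a_i$ and is therefore topologically conjugate to a rigid rotation by $b_i/a_i$, which is real-analytic; one then replays the breakpoint construction of $(1)$ inside the real-analytic category, using rigid rotations as building blocks, since the combinatorial condition on $b_0$ and the $b_i/a_i$'s that governs the construction is insensitive to the regularity class. Suspending the resulting analytic representation yields a horizontal analytic foliation. The heart of the proof, and the step I expect to dominate the work, is the explicit inductive breakpoint construction underlying $(1)$: simultaneously prescribing the rotation number of each of the $n$ factors and the integer translation exponent of their product; statements $(2)$ and $(3)$ then follow relatively cleanly from this construction together with the standard sub-additivity and conjugacy properties of the rotation number.
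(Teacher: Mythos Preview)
The paper does not give its own proof of this theorem: it is stated as a collection of results from \cite{EHN,JN,N} and used as input for the rest of the argument. So there is no in-paper proof to compare against; your sketch is essentially an outline of the very papers being cited.

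Your treatment of (1) and (2) via holonomy representations and rotation-number subadditivity is the standard \cite{EHN} argument and is fine at this level of detail (modulo a possible sign in the translation exponent, which is a convention issue). The real issue is (3). You write that one ``replays the breakpoint construction of (1) inside the real-analytic category'' because ``the combinatorial condition \ldots is insensitive to the regularity class.'' But the construction in (1) only covers the easy range $2\le b_0\le n-2$. The content of (3) is precisely the boundary cases $b_0\in\{1,n-1\}$, where one must show that whenever a $\mathcal{C}^0$ representation with the prescribed rotation numbers exists, so does an analytic one. This is exactly the Jankins--Neumann conjecture \cite{JN}, proved by Naimi \cite{N}, and it is substantially harder than the piecewise-linear breakpoint trick: it requires a delicate analysis of which triples (or $n$-tuples) of rotation numbers can be realized as factors of a given integer translation by analytic (in fact, by rigid-rotation-like) circle maps, and leads to the combinatorial Property~$(*)$ used later in the paper. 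Your sketch asserts this equivalence rather than indicating how it is proved; that is the step where the genuine work in \cite{JN,N} lives.
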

 
\begin{COR}\label{hor}
Let $n$ be an integer and $M$ be a Seifert manifold based on $\mathbb{S}^2$.
\\
We assume that $n\geq 3$ and that $M=M(-b_0, b_1/a_1, \dots, b_n/a_n)$, where
$a_i$ and $b_j$ are positive integers for all $(i,j)\in\{1,\dots,n\}\times\{0,\dots,n\}$,
and $b_0\not\in\{1,n-1\}$.
\\
Then
$M$ admits an analytic horizontal foliation if and only if $2\leq b_0\leq n-2$.
\end{COR}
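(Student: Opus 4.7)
\medskip

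The plan is to derive this as a direct logical combination of the three statements in Theorem~\ref{ehn}, using the hypothesis $b_0\notin\{1,n-1\}$ only in the forward direction.

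For the sufficiency direction ($\Leftarrow$), assume $2\leq b_0\leq n-2$. By part~(1) of Theorem~\ref{ehn}, $M$ admits a horizontal foliation; this foliation is in particular a horizontal \C0-foliation. Then part~(3) of Theorem~\ref{ehn} upgrades it to a horizontal analytic foliation on $M$. Note that the hypothesis $b_0\notin\{1,n-1\}$ is not used here: sufficiency holds unconditionally once $2\leq b_0\leq n-2$.

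For the necessity direction ($\Rightarrow$), suppose $M$ admits an analytic horizontal foliation. An analytic foliation is in particular a foliation, so by part~(2) of Theorem~\ref{ehn} we obtain $1\leq b_0\leq n-1$. The assumption $b_0\notin\{1,n-1\}$ then excludes the two extreme values, leaving precisely $2\leq b_0\leq n-2$.

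No step here is non-trivial; the work is entirely contained in Theorem~\ref{ehn}. The only mild obstacle is purely formal: one must verify that the regularity passes correctly through parts~(1)--(3) in the sufficiency direction, i.e.\ that the foliation produced by~(1) is at least \C0\ so that the upgrade of~(3) applies. This is automatic since every foliation is \C0\ by convention.
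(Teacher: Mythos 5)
Your proposal is correct and is exactly the intended argument: the corollary is an immediate formal consequence of Theorem~\ref{ehn}, with part~(1) plus the regularity upgrade of part~(3) giving sufficiency, and part~(2) together with the hypothesis $b_0\notin\{1,n-1\}$ giving necessity. The paper treats this as immediate and offers no separate proof, so there is nothing further to compare.
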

%%%%%%%%%%%%%
%%%%%%%%%%%%%%
Therefore, the problem falls on $b_0=1$; we may recall here (see Section~$2$)~: 
$$M(-1, b_1/a_1, \dots, b_n/a_n)\cong-M(-(n-1), 1-b_1/a_1, \dots, 1-b_n/a_n).$$

The following theorem is a consequence of Corollary \ref{equiv cor}
and
the characterization of the existence of horizontal foliations in Seifert-fibered spaces based on 
$\mathbb{S}^2$, whose formulation can be found in \cite[Proposition 6]{BNR}.
%%%%%%%%%%%%%%%%
\begin{THM}\label{exist_taut}
Let $n> 2$ be an integer and $M=M(-1, b_1/a_1, \dots, b_n/a_n)$
be a \QHS\ Seifert fibered $3$-manifold;
where $a_i$ and $b_j$ are positive integers for all $(i,j)\in\{1,\dots,n\}^2$.

Assume that $b_1/a_1\geq b_2/a_2\geq\dots\geq b_n/a_n$.
%(which is always possible up to re-order the rational coefficients $b_i/a_i$).

If $n> 3$ (resp. $n=3$)
then, $M$ admits a taut $\mathcal{C}^2$-foliation 
(resp. a taut \C0-foliation)
if and only if there exist two positive integers
$m$ and $\alpha$ such that $(m,\alpha)$ satisfies Property $(*)$.
\end{THM}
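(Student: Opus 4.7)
The plan is to reduce the existence of a taut foliation on $M$ to the existence of a horizontal one, and then apply the Jankins--Naimi--Neumann characterization of horizontal foliations on Seifert fibered rational homology spheres based on $\mathbb{S}^2$ with $b_0=-1$.

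First, I would invoke Corollary~\ref{equiv cor}: since $M$ is a \QHS\ Seifert fibered $3$-manifold and $n\geq 3$, the existence of a taut \c2-foliation (when $n>3$), respectively a taut \C0-foliation (when $n=3$), is equivalent to the existence of a horizontal foliation on $M$. Moreover, by part $(3)$ of Theorem~\ref{ehn}, any horizontal \C0-foliation can be upgraded to an analytic horizontal one, so the regularity issue evaporates and the problem reduces purely to determining when $M$ admits a horizontal foliation.

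Second, I would apply the characterization of horizontal foliations on Seifert fibered spaces over $\mathbb{S}^2$, due to Jankins--Neumann \cite{JN} and Naimi \cite{N}, in the explicit form given by \cite[Proposition~6]{BNR}. Applied with the normalized Seifert invariants $(-1;b_1/a_1,\dots,b_n/a_n)$ and the descending ordering $b_1/a_1\geq\dots\geq b_n/a_n$, that statement asserts that $M$ admits a horizontal foliation if and only if there exist positive integers $m$ and $\alpha$ with $0<\alpha<m$ satisfying
\[
\frac{b_1}{a_1}<\frac{m-\alpha}{m},\qquad \frac{b_2}{a_2}<\frac{\alpha}{m},\qquad \frac{b_i}{a_i}<\frac{1}{m}\ \text{for}\ 3\leq i\leq n.
\]
These are precisely the inequalities $(i),(ii),(iii)$ defining Property~$(*)$. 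Combining this equivalence with the first step yields the theorem.

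The only delicate point, and therefore the main obstacle, is bookkeeping: one must check carefully that the hypotheses underlying the statement of \cite[Proposition~6]{BNR} really do translate into the setting of Property~$(*)$, in particular the sign convention $b_0=-1$ (with the $\Phi$-symmetry swapping $b_0=-1$ and $b_0=-(n-1)$ recalled in Section~$2$), the normalization $0<b_i<a_i$ coming from the Seifert normalized invariant, and the descending ordering of the $b_i/a_i$ which fixes which fraction plays the role of $b_1/a_1$ versus $b_2/a_2$ in $(*)$. Once this matching is verified, no further topology or dynamics of foliations enters the argument, and the theorem follows at once from Corollary~\ref{equiv cor} and the cited characterization.
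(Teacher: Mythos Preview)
Your proposal is correct and follows essentially the same approach as the paper: the paper states that Theorem~\ref{exist_taut} is a direct consequence of Corollary~\ref{equiv cor} together with the characterization of horizontal foliations in \cite[Proposition~6]{BNR}, which is precisely your two-step argument. Your additional remarks about regularity via Theorem~\ref{ehn}(3) and the normalization bookkeeping are sensible elaborations, but they do not diverge from the paper's route.
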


We may recall that $\mathcal{P}$ denotes the Poincar\'e $\mathbb{Z}$HS, i.e. $\mathcal{P}=M(-1,1/2,1/3,1/5)$.
Note that Theorem \ref{exist_taut} implies that $\mathcal{P}$ cannot admit a taut foliation,
but this fact was already known by S.~P.~Novikov's Theorem, see \cite{No} (because its $\pi_1$ is finite).
Note also that if $n\in\{1,2\}$ then $M$ has to be $\mathbb{S}^3$
or a Lens space, which cannot admit a taut foliation.
\\ 
Theorem \ref{exist_taut} has the following corollaries, which will be useful for the next sections.
%%%%% COR 1/2%%%%
 \begin{COR}\label{1/2}
 Let $n$ be an integer and $M$ be a \QHS\ Seifert fibered $3$-manifold.\\
We assume that $n\geq 3$ and that $M=M(-1, b_1/a_1, \dots, b_n/a_n)$, where
$a_i$ and $b_j$ are positive integers for all $(i,j)\in\{1, \dots, n\}^2$.

We order the rational coefficients $b_i/a_i$ such that~:
$b_1/a_1\geq b_2/a_2\geq \dots \geq b_n/a_n$.	
\\
If for all $i\in\{1,\dots,n\}$, $\dps \frac{b_i}{a_i}<\frac1 2$ then $M$ admits a taut \c2-foliation.
\end{COR}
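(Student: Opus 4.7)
\textbf{Proof plan for Corollary~\ref{1/2}.} The whole strategy is to feed Theorem~\ref{exist_taut} with the most obvious possible witness: the pair $(m,\alpha)=(2,1)$. Under this choice the three inequalities of Property~$(*)$ become
$$
\frac{b_1}{a_1}<\frac{1}{2},\qquad \frac{b_2}{a_2}<\frac{1}{2},\qquad \frac{b_i}{a_i}<\frac{1}{2}\ \text{for }i\geq 3,
$$
which is exactly the hypothesis of the corollary. Also, $m=2$ and $\alpha=1$ are positive integers with $\alpha<m$, and $n\geq 3$ is given; the ordering of the $b_i/a_i$'s is assumed in the statement. Hence $(m,\alpha)=(2,1)$ satisfies Property~$(*)$ for $b_1/a_1,\dots,b_n/a_n$.

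By Theorem~\ref{exist_taut}, this immediately yields the existence of a taut $\mathcal{C}^2$-foliation when $n>3$, and of a taut $\mathcal{C}^0$-foliation when $n=3$.

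For the remaining case $n=3$, one only has to upgrade the regularity. The $\mathcal{C}^0$-foliation produced by Theorem~\ref{exist_taut} comes (via Corollary~\ref{equiv cor}) from a horizontal $\mathcal{C}^0$-foliation on $M$. By Theorem~\ref{ehn}$(3)$, the existence of a horizontal $\mathcal{C}^0$-foliation on a Seifert manifold over $\mathbb{S}^2$ implies the existence of a horizontal analytic foliation. Applying Lemma~\ref{easy} (a horizontal foliation is taut) then gives a taut analytic, hence $\mathcal{C}^2$, foliation of $M$.

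Since the witness $(2,1)$ is essentially forced by the hypothesis and every subsequent implication is an immediate invocation of results already established in the section, there is no genuine obstacle; the only subtle point to pay attention to is the $n=3$ regularity gap, which is handled by the analytic-improvement statement of Theorem~\ref{ehn}$(3)$.
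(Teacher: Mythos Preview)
Your proof is correct and follows the same approach as the paper: choose $(m,\alpha)=(2,1)$ and observe that Property~$(*)$ reduces exactly to the hypothesis $b_i/a_i<1/2$ for all $i$. The paper's proof stops there, implicitly treating Theorem~\ref{exist_taut} as yielding a $\mathcal{C}^2$-foliation in all cases; your explicit handling of the $n=3$ regularity gap via Theorem~\ref{ehn}(3) and Lemma~\ref{easy} is a welcome clarification that the paper omits.
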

\begin{proof}
With the notations and assumptions of the theorem,
if $b_i/a_i<1/2$, for all $i\in\1n$, then Property $(*)$ is satisfied, by choosing $m=2$ and $\alpha=1$.
\end{proof}
%%%%% Next COR %%%%
 \begin{COR}\label{1/3}
 Let $n$ be an integer and $M$ be a \QHS\ Seifert fibered $3$-manifold.
\\
We assume that $n\geq 3$ and that $M=M(-1, b_1/a_1, \dots, b_n/a_n)$, where
$a_i$ and $b_j$ are positive integers for all $(i,j)\in\{1,\dots,n\}^2$.

We order the rational coefficients $b_i/a_i$ such that~:
$b_1/a_1\geq b_2/a_2\geq \dots\geq b_n/a_n$.

If $M$ admits a taut \c2-foliation and $\dps \frac{b_1}{a_1}\geq1/2$,
then the two following properties are both satisfied.
\\
\begin{enumerate}
\item 
$\dps \frac{b_i}{a_i}<\frac 1 2$, for all $i\geq 2$..
\\
\item
$\dps\frac{b_n}{a_n}<\frac13$.
In particular, $a_n\geq 4$.
\end{enumerate}
\end{COR}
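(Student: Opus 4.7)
The plan is to invoke Theorem~\ref{exist_taut} and then carry out elementary arithmetic on the Property~$(*)$ inequalities. Since a taut $\mathcal{C}^2$-foliation is \emph{a fortiori} a taut $\mathcal{C}^0$-foliation, Theorem~\ref{exist_taut} applies regardless of whether $n=3$ or $n>3$, and produces positive integers $m$ and $\alpha$, with $\alpha<m$, satisfying Property~$(*)$ for the sequence $b_1/a_1\geq\dots\geq b_n/a_n$.

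For (1), I would start from inequality $(i)$: the hypothesis $b_1/a_1\geq 1/2$ combined with $b_1/a_1 < (m-\alpha)/m = 1-\alpha/m$ yields
$$\frac{\alpha}{m}<\frac{1}{2},\qquad\text{i.e.}\qquad 2\alpha<m.$$
Then $(ii)$ gives $b_2/a_2<\alpha/m<1/2$, and the ordering $b_i/a_i\leq b_2/a_2$ for all $i\geq 2$ concludes (1).

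For (2), the inequality $2\alpha<m$ combined with the fact that $\alpha\geq 1$ (a positive integer) forces $m\geq 3$. Substituting into $(iii)$, which applies because $n\geq 3$, gives
$$\frac{b_i}{a_i}<\frac{1}{m}\leq\frac{1}{3}\qquad\text{for every }i\in\{3,\dots,n\}.$$
In particular $b_n/a_n<1/3$, and since $b_n\geq 1$ one has $a_n>3b_n\geq 3$, whence $a_n\geq 4$.

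The proof is essentially a two-line bookkeeping on Property~$(*)$, so there is no real obstacle; the only point worth checking is that the case $n=3$ is genuinely covered by Theorem~\ref{exist_taut} via the implication ``taut $\mathcal{C}^2$ $\Rightarrow$ taut $\mathcal{C}^0$'', which is immediate.
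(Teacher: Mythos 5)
Your proof is correct and follows essentially the same route as the paper: invoke Theorem~\ref{exist_taut} (using that a taut $\mathcal{C}^2$-foliation is a taut $\mathcal{C}^0$-foliation when $n=3$) to get $(m,\alpha)$ satisfying Property~$(*)$, use $b_1/a_1\geq 1/2$ with $(i)$ to force $\alpha/m<1/2$ and $m\geq 3$, then read off $(ii)$ and $(iii)$. The paper merely phrases the two arithmetic steps as small contradictions, which is an immaterial difference.
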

\begin{proof}
With the notations and assumptions of Theorem~\ref{exist_taut}, 
if $M$ admits a taut \c2-foliation then we can find
positive integers $m,\alpha$ such that $\alpha<m$ and Property~$(*)$ 
is satisfied.

First, note that if $m=2$ then $\alpha=1$ and $b_1/a_1<1/2$, which is a contradiction to the 
hypothesis. Thus, $m\geq 3$.

Now, if $\dps \frac{m-\alpha}{m}> \frac{1}{2}$ then $\dps \frac{\alpha}{m}< \frac{1}{2}$, hence Property $(*)$ implies $\dps \frac{b_i}{a_i}<\frac{1}{2}$ for $i\in\{2,\dots,n\}$ which proves (1).

Finally, assume that 
$\dps \frac{b_1}{a_1}\geq \frac1 2\geq \frac{b_2}{a_2}\geq \frac{b_3}{a_3}\geq\frac 1 3$.
Then $b_3/a_3\geq1/m$ for all $m\geq 3$,
so $(iii)$ of $(*)$ cannot be satisfied.
 \end{proof}
%%%%%%%%%%%%%
%%%%%%  SECTION   
%%%%%%%%%%%%%%
\section{Geometries of Seifert fibered homology $3$-spheres}
The goal of this section is to recall general results on the 
geometries of Seifert fibered homology $3$-spheres,
and prove Proposition~\ref{global}.

Let $n$ be a positive integer and $M=M(-b_0, b_1/a_1, \dots, b_n/a_n)$ be a $\mathbb{Q}$HS Seifert 
fibered $3$-manifold.
Recall that $e(M)$ denotes the Euler number of $M$, see Section~\ref{prel}.
The following lemma is a well known result, see \cite{Gi} for more details.
\begin{LM}\label{QHS}
Let $M=M(-b_0, b_1/a_1, \dots, b_n/a_n)$ be a Seifert fibered $3$-manifold. Then~:
\begin{enumerate}
\item
$M$ is a $\mathbb{Z}$HS if and only if 
$\dps a_1a_2 \dots a_ne(M)= \varepsilon$, where $\varepsilon\in\{-1,+1\}$;

\item
$M$ is a $\mathbb{Q}$HS if and only if 
$\dps e(M)\not=0$.
\end{enumerate}
\end{LM}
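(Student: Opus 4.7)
The plan is to compute $H_1(M;\mathbb{Z})$ explicitly from the Seifert fibration and read both assertions off the resulting formula.

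First, I would decompose $M$ as the union of $P\times\mathbb{S}^1$, where $P$ is a planar surface with $n+1$ boundary components, together with $n+1$ Dehn fillings: one along the slope $-b_0$ at the ``regular'' boundary torus and one along the slope $b_i/a_i$ at the $i$-th exceptional boundary torus. Abelianising the standard Seifert group presentation (equivalently, running Mayer--Vietoris with this decomposition) and eliminating the dependent boundary generator of $P$, one obtains
$$H_1(M;\mathbb{Z})=\Big\langle x_1,\dots,x_n,h\ \Big|\ a_ix_i+b_ih=0\ (i=1,\dots,n),\ x_1+\cdots+x_n+b_0h=0\Big\rangle,$$
where $x_i$ is the section class at the $i$-th exceptional torus and $h$ is the class of a regular fiber.

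Second, I would compute the determinant $D$ of the resulting $(n+1)\times(n+1)$ presentation matrix by expanding along the last row. Since the matrix has $a_i$ on the diagonal for $i\le n$, entries $b_i$ down the last column, ones in the last row, and zeros elsewhere, expansion gives
$$D \;=\; \pm\,a_1a_2\cdots a_n\!\left(b_0-\sum_{i=1}^n\frac{b_i}{a_i}\right) \;=\; \mp\,a_1a_2\cdots a_n\,e(M),$$
using $e(M)=-b_0+\sum b_i/a_i$ from Section~\ref{prel}.

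Finally, because the presentation matrix is square, $H_1(M;\mathbb{Z})$ is finite if and only if $D\neq 0$, and when finite its order equals $|D|$. Assertion~$(2)$ follows immediately: $M$ is a $\mathbb{Q}$HS iff $H_1(M;\mathbb{Z})$ is finite iff $e(M)\neq 0$. Assertion~$(1)$ then follows: $M$ is a $\mathbb{Z}$HS iff $|H_1(M;\mathbb{Z})|=1$ iff $|a_1\cdots a_n\,e(M)|=1$, i.e.\ $a_1\cdots a_n\,e(M)=\varepsilon$ with $\varepsilon\in\{-1,+1\}$.

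The only delicate point is keeping track of sign conventions on the section classes and on the Dehn filling slopes so that the determinant produces $e(M)$ with the normalisation fixed in Section~\ref{prel} (rather than its negative). This is pure bookkeeping, not a genuine obstacle, and the authors in any case defer to \cite{Gi} for these standard computations.
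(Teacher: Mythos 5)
Your proof is correct. The paper does not actually prove Lemma~\ref{QHS} --- it is stated as a well-known fact with a reference to \cite{Gi} --- and your argument is exactly the standard one that reference would supply: abelianize the Seifert presentation, observe the presentation matrix is square, and identify its determinant with $\pm a_1\cdots a_n\,e(M)$, so that $H_1(M;\mathbb{Z})$ is finite iff $e(M)\neq 0$ and has order $|a_1\cdots a_n\,e(M)|$ in that case. The only steps left implicit are harmless: that a closed orientable $3$-manifold is a $\mathbb{Z}$HS (resp.\ $\mathbb{Q}$HS) iff $H_1(M;\mathbb{Z})$ is trivial (resp.\ finite), which follows from Poincar\'e duality and universal coefficients, and the sign bookkeeping you mention, which is immaterial since both assertions of the lemma depend only on $|a_1\cdots a_n\,e(M)|$.
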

\begin{RK}\label{rk ZHS}
Note that $(1)$ implies that the $a_i$'s are pairwise relatively prime integers,
therefore they are different. 
\end{RK}
Then, we define the rational number $\chi_M$ as follows.
$$\displaystyle \chi_M= 2 - \sum_{i=1}^n (1-\frac{1}{a_i})=2-n+\sum_{i=1}^n\frac{1}{a_i}.$$
We have the following well known result (which can be found in \cite{Sc} for example).
\begin{PR}\label{prop chi}
Let $n$ be a positive integer and $M=M(-b_0, b_1/a_1, \dots, b_n/a_n)$ be a $\mathbb{Q}$HS\
Seifert fibered $3$-manifold, then the following properties all are satisfied.
\begin{itemize}
\item[(i)]
$\chi_M>0 \Leftrightarrow$ $M$ admits the $\mathbb{S}^3$-geometry.
\item[(ii)]
$\chi_M<0 \Leftrightarrow$ $M$ admits the $\widetilde{SL}_2(\mathbb{R})$-geometry.
\item[(iii)]
$\chi_M=0 \Leftrightarrow$ $M$ admits the $\mathcal{N}$il-geometry.
\end{itemize}
\end{PR}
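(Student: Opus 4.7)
The plan is to identify $\chi_M$ with the orbifold Euler characteristic of the base $2$-orbifold of the Seifert fibration, and then invoke Thurston's classification of geometries on Seifert fibered $3$-manifolds (see \cite{Sc}) to read off the three equivalences from the sign of $\chi_M$ together with the non-vanishing of $e(M)$.

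First I would observe that since $M$ is a $\mathbb{Q}$HS it is based on $\mathbb{S}^2$ (as noted in Section~$2$), so the base $2$-orbifold $\mathcal{O}$ is a $2$-sphere with $n$ cone points of orders $a_1,\dots,a_n$. Its orbifold Euler characteristic is therefore
\[
\chi^{orb}(\mathcal{O}) \;=\; \chi(\mathbb{S}^2) \;-\; \sum_{i=1}^{n}\Bigl(1-\frac{1}{a_i}\Bigr) \;=\; 2-n+\sum_{i=1}^{n}\frac{1}{a_i} \;=\; \chi_M,
\]
so $\chi_M$ is intrinsic to the base orbifold.

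Second, I would recall the Thurston classification: a closed Seifert fibered $3$-manifold with base orbifold $\mathcal{O}$ and Euler number $e$ admits exactly one of six geometries, determined by the pair $(\mathrm{sign}\,\chi^{orb}(\mathcal{O}),\, e\overset{?}{=}0)$, namely $\mathbb{S}^2\times\mathbb{R}$, $\mathbb{S}^3$, $\mathbb{E}^3$, $\mathcal{N}\mathrm{il}$, $\mathbb{H}^2\times\mathbb{R}$ or $\widetilde{SL}_2(\mathbb{R})$, with the product geometries corresponding to $e=0$ and the twisted ones to $e\not=0$, each pair sharing the same sign of $\chi^{orb}$.

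Finally, by Lemma~\ref{QHS}$(2)$, $M$ being a $\mathbb{Q}$HS forces $e(M)\not=0$, so the three product geometries are excluded. The three remaining cases are precisely $\mathbb{S}^3$ when $\chi_M>0$, $\mathcal{N}\mathrm{il}$ when $\chi_M=0$, and $\widetilde{SL}_2(\mathbb{R})$ when $\chi_M<0$, which is exactly the statement. The only non-routine ingredient here is the Thurston classification itself; once it is quoted, the argument reduces to the identification $\chi_M=\chi^{orb}(\mathcal{O})$ and an application of Lemma~\ref{QHS}, both of which are immediate.
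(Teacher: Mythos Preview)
Your argument is correct and is exactly the standard justification one would give for this fact. Note, however, that the paper does not actually prove Proposition~\ref{prop chi}: it is stated as a ``well known result (which can be found in \cite{Sc} for example)'' and simply quoted without proof. Your write-up is thus an explicit unpacking of the reference the authors cite, via the identification $\chi_M=\chi^{orb}(\mathcal{O})$, the Thurston classification table, and the elimination of the product geometries using $e(M)\neq 0$ from Lemma~\ref{QHS}.
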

%%%%%%%%%%%%%
\begin{PR}\label{4}
Let $n$ be a positive integer and $M=M(-b_0, b_1/a_1, \dots, b_n/a_n)$ be a $\mathbb{Q}$HS\
Seifert fibered $3$-manifold.
If $M$ does not admit the $\widetilde{SL}_2(\mathbb{R})$-geometry then $n\leq 4$.

Furthermore, if $n=4$ then $M=M(-b_0,1/2,1/2,1/2,1/2)$ with $b_0\not=2$; 
so $M$ admits the $\mathcal{N}$il-geometry and is a non-integral \QHS.
\end{PR}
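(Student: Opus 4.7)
The entire statement is an elementary computation with $\chi_M$ once we invoke Proposition~\ref{prop chi}. The plan is to translate the geometric hypothesis into the inequality $\chi_M \geq 0$, then exploit the constraint $a_i \geq 2$ (since every $a_i$ is the multiplicity of an exceptional fiber) to bound $n$, and finally examine the equality case $n = 4$ using Lemma~\ref{QHS} to identify the geometry and the homological type.

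\textbf{Step 1: the bound $n \leq 4$.} Suppose $M$ does not admit the $\widetilde{SL}_2(\mathbb{R})$-geometry. By Proposition~\ref{prop chi}~(ii), this is equivalent to $\chi_M \geq 0$. Writing out $\chi_M = 2 - n + \sum_{i=1}^n \frac{1}{a_i}$ and using $a_i \geq 2$ for every $i$, I would estimate
$$0 \;\leq\; \chi_M \;=\; 2 - n + \sum_{i=1}^n \frac{1}{a_i} \;\leq\; 2 - n + \frac{n}{2} \;=\; 2 - \frac{n}{2},$$
which immediately forces $n \leq 4$.

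\textbf{Step 2: the case $n = 4$.} Plugging $n = 4$ into the inequality above gives $\sum_{i=1}^4 \frac{1}{a_i} \geq 2$. Since each term is at most $\frac{1}{2}$, equality must hold everywhere, so $a_i = 2$ for all $i \in \{1,2,3,4\}$. The normalization $0 < b_i < a_i$ then forces $b_i = 1$, whence
$$M = M(-b_0,\, 1/2,\, 1/2,\, 1/2,\, 1/2).$$
Moreover equality in Step~1 yields $\chi_M = 0$, so by Proposition~\ref{prop chi}~(iii), $M$ carries the $\mathcal{N}$il-geometry.

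\textbf{Step 3: non-integral homology type and the condition $b_0 \neq 2$.} The Euler number is $e(M) = -b_0 + 4 \cdot \frac{1}{2} = 2 - b_0$. By Lemma~\ref{QHS}~(2), the hypothesis that $M$ is a $\mathbb{Q}$HS is equivalent to $e(M) \neq 0$, i.e.\ $b_0 \neq 2$; this gives the last restriction. To show $M$ is not integral I can appeal directly to Remark~\ref{rk ZHS}: if $M$ were a $\mathbb{Z}$HS, the $a_i$'s would be pairwise coprime, which fails since they all equal $2$. (Alternatively, Lemma~\ref{QHS}~(1) would require $a_1 a_2 a_3 a_4\, e(M) = 16(2 - b_0) = \pm 1$, which is impossible in integers.) I do not expect any real obstacle here; the whole proposition follows from the elementary arithmetic of $\chi_M$ together with the standard dictionaries of Proposition~\ref{prop chi} and Lemma~\ref{QHS}.
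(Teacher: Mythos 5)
Your proof is correct and follows essentially the same route as the paper: the bound $n\leq 4$ comes from $\chi_M\geq 0$ via Proposition~\ref{prop chi} together with $a_i\geq 2$, the case $n=4$ forces all $a_i=2$ (hence $\chi_M=0$ and the $\mathcal{N}$il-geometry), and $b_0\neq 2$ plus the non-integrality follow from Lemma~\ref{QHS} and Remark~\ref{rk ZHS}. Your explicit remark that the normalization $0<b_i<a_i$ forces $b_i=1$ is a small detail the paper leaves implicit, but otherwise the two arguments coincide.
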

\begin{proof}
Let $n$ be a positive integer and $M=M(-b_0, b_1/a_1, \dots, b_n/a_n)$ be a \QHS.
Assume that $M$ does not admit the $\widetilde{SL}_2(\mathbb{R})$-geometry.
Then, by Proposition~\ref{prop chi}, $\chi_M\geq 0$.
Therefore, $\dps n-2\leq\sum_{i=1}^n\frac{1}{a_i}$.
\\
Since, $a_i\geq2$ for all $i\in\1n$,
$\dps n-2\leq\sum_{i=1}^n\frac{1}{a_i}\leq n/2\Rightarrow n\leq 4$.
\\\\
Now, assume first that $n=4$.
Then, $\dps \sum_{i=1}^4\frac{1}{a_i}\geq 2$.
On the other hand, $a_i\geq 2$ for all $i\in\{1,\dots,4\}$, then 
$\dps \sum_{i=1}^4\frac{1}{a_i}\leq 2$, 
and if one $a_i>2$ then $\dps \sum_{i=1}^4\frac{1}{a_i}< 2$.

Therefore, $a_i=2$ for all $i\in\{1,\dots,4\}$.
Thus, $\chi_M=0$ which means that $M$ admits the $\mathcal{N}$il-geometry.
Moreover, Lemma~\ref{QHS}~$(2)$ implies that $b_0\not=2$.
Note that such $M$ cannot be a \ZHS, by Remark~\ref{rk ZHS}.
\end{proof}
\begin{COR}\label{poincsphere}
Let $M$ be a \ZHS\ Seifert fibered $3$-manifold.
Then, $M$ has the $\widetilde{SL}_2(\mathbb{R})$-geometry 
or the $\mathbb{S}^3$-geometry.

Furthermore,
if $M$ has the $\mathbb{S}^3$-geometry, 
then $M$ is either homeomorphic to $\mathbb{S}^3$
or to the Poincar\'e sphere $\mathcal{P}$.
\end{COR}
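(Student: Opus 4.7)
The plan is to combine Proposition~\ref{4}, Proposition~\ref{prop chi}, and the pairwise coprimality of the $a_i$'s from Remark~\ref{rk ZHS}, reducing the argument to a short enumeration controlled by the number $n$ of exceptional fibers.

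First I would restrict to $n\le 3$. Proposition~\ref{4} shows that if $M$ does not admit the $\widetilde{SL}_2(\mathbb{R})$-geometry, then $n\le 4$; moreover it forces $n=4$ to yield a non-integral homology sphere, which is incompatible with the standing hypothesis that $M$ is a \ZHS. When $n\le 2$, the Seifert manifold $M$ is a lens space (possibly $\mathbb{S}^3$ or $\mathbb{S}^1\times\mathbb{S}^2$), as noted after Proposition~\ref{global}; among these, the only \ZHS\ is $\mathbb{S}^3$, which obviously carries the $\mathbb{S}^3$-geometry.

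It remains to treat $n=3$. Here
$$\chi_M=-1+\frac{1}{a_1}+\frac{1}{a_2}+\frac{1}{a_3}.$$
Suppose $M$ does not admit the $\widetilde{SL}_2(\mathbb{R})$-geometry; then $\chi_M\ge 0$ by Proposition~\ref{prop chi}, i.e.\ $1/a_1+1/a_2+1/a_3\ge 1$. By Remark~\ref{rk ZHS} the $a_i$ are pairwise coprime and in particular distinct, so after reordering $2\le a_1<a_2<a_3$. If $a_1\ge 3$, then $1/a_1+1/a_2+1/a_3<3/a_1\le 1$, a contradiction; hence $a_1=2$, and $a_2,a_3$ must be odd. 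The inequality $1/a_2+1/a_3\ge 1/2$ combined with $a_2\ge 3$ forces $a_2=3$; coprimality to $6$ together with $1/a_3\ge 1/6$ then pins down $a_3=5$. Thus $\chi_M=1/30>0$, so $M$ carries the $\mathbb{S}^3$-geometry, and in particular the $\mathcal{N}$il-geometry never occurs.

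To conclude, I would invoke the classification of spherical space-forms (equivalently, of finite free actions on $\mathbb{S}^3$) to identify the unique Seifert fibered integral homology $3$-sphere with exceptional fiber orders $(2,3,5)$ as the Poincar\'e sphere $\mathcal{P}$. No serious obstacle arises once Propositions~\ref{4} and~\ref{prop chi} and Remark~\ref{rk ZHS} are in hand; the argument is essentially a reciprocal-sum enumeration plus the standard recognition of $\mathcal{P}$ among spherical $3$-manifolds.
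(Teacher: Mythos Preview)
Your argument is correct and follows essentially the same route as the paper's proof: reduce to $n\le 3$ via Proposition~\ref{4}, handle $n\le 2$ trivially, and then for $n=3$ use $\chi_M\ge 0$ together with the pairwise coprimality of the $a_i$ (Remark~\ref{rk ZHS}) to force $(a_1,a_2,a_3)=(2,3,5)$ by a short reciprocal-sum enumeration. The only cosmetic difference is in the final identification: the paper simply asserts that a \ZHS\ with multiplicities $(2,3,5)$ is the Poincar\'e sphere (which follows from Seifert's classification together with Lemma~\ref{QHS}(1), since the $b_i$'s and $b_0$ are then determined up to the orientation-reversing homeomorphism $\Phi$), whereas you appeal to the classification of spherical space forms---a larger hammer than needed, but perfectly valid.
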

\begin{proof}
Let $M$ be a \ZHS\ Seifert fibered $3$-manifold.
Assume that $M$ does not have the $\widetilde{SL}_2(\mathbb{R})$-geometry.
Note that if $n\leq 2$ then $M$ has to be homeomorphic to $\mathbb{S}^3$.
By Proposition~\ref{4}, we may assume that $n=3$ and that
$a_3>a_2>a_1\geq 2$ (by remark~\ref{rk ZHS}).
 
 Since $\chi_M\geq 0$, $\dps\sum_{i=1}^3\frac{1}{a_i}\geq1$.
 If $a_1\geq 3$, then $\dps\sum_{i=1}^3\frac{1}{a_i}\leq 1/3+1/4+1/5<1$, which is a contradiction.
 Then $a_1=2$.
 If $a_2\not=3$ then $a_2\geq 5$ by remark~\ref{rk ZHS}.
 Hence, $\dps\sum_{i=1}^3\frac{1}{a_i}\leq 1/2+1/5+1/7<1$, which is a contradiction.
 Therefore, $a_1=2$ and $a_2=3$. Similarly $a_3=5$.
 Since $n=3$ and $(a_1,a_2,a_3)=(2,3,5)$, $M$ has to be homeomorphic to the Poincar\'e sphere, which satisfies $\chi_M>0$, so $\mathcal P$ has the 
 $\mathbb{S}^3$-geometry.
 \end{proof}
To end this section, we simply note that Proposition~\ref{4} together with 
Corol\-lary~\ref{poincsphere} clearly imply Proposition~\ref{global}.
%%%%%%%%%%%%%
%%%%%%  SECTION   %%%%%%
%%%%%%%%%%%%%%
\section{Proof of Theorem \ref{geom_notaut}}
We keep the previous notations.
Let $n$ be a positive integer and $M$ be a $\mathbb{Q}$HS\
Seifert fibered $3$-manifold, with $n$ exceptional fibers~:
$M=M(-b_0, b_1/a_1, \dots, b_n/a_n)$.
Assume that $M$ does not admit the $\widetilde{SL}_2(\mathbb{R})$-geometry.
We make the proof by contradiction.
Suppose that $M$ admits a taut \c2-foliation.
We may recall that if $n\in\{1,2\}$,
then $M$ has a finite $\pi_1$, hence $M$ cannot admit a taut \c2-foliation.
Therefore, by Proposition~\ref{4}, we have $n\in\{3,4\}$.
\\
Assume that $n=4$.
By Theorem~\ref{ehn} and Theorem \ref{eq_taut_hor}, since $M$ admits a taut \c2-foliation,
$b_0\in \{1,2,3\}$.
Moreover the cases $b_0=1$ and $b_0=3$ are equivalent
(see the fiber-preserving homeomorphism~$\Phi$ in Section~$2$).

On the other hand, Proposition~\ref{4} implies that $M=M(-b_0,1/2,1/2,1/2,1/2)$ with $b_0\not=2$;
and Corollary~\ref{1/3} $(1)$ implies that $b_0\not=1$.

Therefore, we may assume that $n=3$.
Similarly $b_0\in\{1,2\}$ and $b_0=1$ and $b_0=2$ are equivalent cases, 
by considering the fiber-preserving homeomorphism~$\Phi$.

So, we may assume that $b_0=1$.
Let $M=M(-1,b_1/a_1,b_2/a_2,b_3/a_3)$,

Since $M$ is a $\mathbb{Q}$HS Seifert fibered $3$-manifold,
which does not admit the $\widetilde{SL}_2(\mathbb{R})$-geometry,
Proposition~\ref{prop chi} and Lemma~\ref{QHS}(2) give respectively~:
 $$
\left\{ \begin{array}{ll}
\hbox{\rm (I1)}&\dps \sum_{i=1}^3\frac{1}{a_i}\geq 1\\
\\
\hbox{\rm and}
\\
\hbox{\rm (I2)}&\dps\sum_{i=1}^3\frac{b_i}{a_i}\neq 1\\
\end{array} \right.
$$
\\
By Corollary~\ref{1/3}, we order the coefficients~:
$b_1/a_1\geq b_2/a_2\geq b_3/a_3$.

Let $a_{i_0}=\min(a_1,a_2,a_3)$. By (I1), $a_{i_0}\in\{2,3\}$.

First, we prove that $a_{i_0}$ cannot be $3$.
We make the proof by contradiction.
Assume that $a_{i_0}=3$, then (I1) implies that $a_i=3$ for all $i\in\{1,2,3\}$.
Now, for all $i\in\{1,2,3\}$, $b_i<a_i$  so  $b_i\leq 2$.
If there exists $i\in\{1,2,3\}$ such that $b_i=2$ then $b_i/a_i=2/3>1/2$.
But for $j\neq i$, $b_j/a_j\geq 1/3$, which is a contradiction
to Corollary~\ref{1/3} (2).
Therefore, $b_i/a_i=1/3$, for all $i\in\{1,2,3\}$,
which contradicts~(I2).

Hence, we may assume that $a_{i_0}=2$.
Then~:
$$b_{i_0}/a_{i_0}=1/2.$$
By Corollary~\ref{1/3} $(1)$, $b_{i_0}/a_{i_0}=b_1/a_1$.

Then Corollary \ref{1/3} $(1)$ and $(2)$ imply respectively that
$a_3\geq 4$
and $a_2\geq 3$.

Now (I1) implies that $\{a_1,a_2,a_3\}$ is one of the following sets~:
\begin{center}
$\{2,3,4\}, \{2,3,5\},\{2,3,6\}$  or $\{2,4,4\}$.
\end{center}

We distinguish the cases $a_{2}=3$ and $a_{2}=4$.
\\\\
{\sc Case 1~:} $a_{2}=3$.
\\
Then Corollary \ref{1/3} (1) implies that~:
$$b_{2}/a_{2}=1/3.$$
Now, by Theorem~\ref{exist_taut}, there exist positive integers 
$\alpha$ and $m$ which satisfy Pro\-perty~$(*)$.
Now Corollary \ref{1/3} (2) implies that~:
$\dps \frac{b_{3}}{a_{3}}\in\bigg\{\frac14, \frac15, \frac16\bigg\}$.
Hence, by $(*) (iii)$, $m\leq 5$. 

Since $b_{1}/a_{1}=1/2$, $m>2$.

If $m=3$ then $\alpha\in\{1,2\}$, but in both cases $(*) (i)$ or $(*) (ii)$ cannot be satisfied.
Similarly, if $m=4$ then $\alpha\in\{1,2,3\}$,
but in all cases $(*) (i)$ or $(*) (ii)$ cannot be satisfied.

If $m=5$ then $a_{3}=6$ and $b_{3}=1$; otherwise $(*) (iii)$ cannot be satisfied.

Thus, $b_1/a_1+b_2/a_2+b_3/a_3=1/2+1/3+1/6=1$, which is in contradiction to~$(I2)$,
i.e. $M$ cannot be a \QHS.
\\\\
{\sc Case 2~:} $a_{2}=4$.
\\
Then $a_{2}=a_{3}=4$. Therefore Corollary \ref{1/3}~$(1)$ implies that
$\dps \frac{b_{2}}{a_{2}}= \frac{b_{3}}{a_{3}}=\frac14$.
Therefore $(I2)$ is not satisfied, which is the final contradiction.

This ends the proof of Theorem~\ref{geom_notaut}.
%
%\qed
%%%%%%%%%%%%%
%%%%%%  SECTION   %%%%%%
\section{Proof of Main Theorem~$2$}
%%%%%%%%%%%%%%%%%%%%%%%%%%
Let $n$ be a positive integer greater than two.
We keep the previous conventions and notations
and denote any \QHS\ Seifert fibered $3$-manifolds $M$ with its normalized Seifert invariant,
by~:
$M=M(-b_0, b_1/a_1,b_2/a_2, \dots,b_n/a_n)$.

Let $\mathcal{SF}_1$ be the set of all Seifert fibered $3$-manifolds
for which $b_0=1$
and which admit the $\widetilde{SL}_2(\mathbb{R})$-geometry.
\\
\\
We denote by $\mathcal{Q}_n$ the set~:
$$\mathcal{Q}_n=\mathcal{S}_n\cap\mathcal{SF}_1.$$
Then $\mathcal{Q}_n$ is the set of non-integral
\QHS\ Seifert fibered $3$-manifolds $M$ with 
$n$ exceptional fibers, which admit the $\widetilde{SL}_2(\mathbb{R})$-geometry
and 
$M=M(-1, b_1/a_1,b_2/a_2, \dots,b_n/a_n)$.

This section is devoted to prove the following result, which clearly implies Main Theorem~$2$.
\begin{THM}\label{geom_taut rational}
Let $n$ be a positive integer greater than two.
For each $n$~:
\begin{itemize}
\item[(i)]
There exist infinitely many Seifert fibered manifolds in $\mathcal{Q}_n$ which admit a taut analytic foliation; and
\item[(ii)]
There exist infinitely many Seifert fibered manifolds in $\mathcal{Q}_n$ which do not admit a taut \c2-foliation.
\item[(iii)]
There exist infinitely many Seifert fibered manifolds in $\mathcal{Q}_3$ which do not admit a taut \C0-foliation.
\end{itemize}
\end{THM}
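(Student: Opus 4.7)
The plan is, for each $n \geq 3$ and each of the three statements, to exhibit an explicit infinite family in $\mathcal{Q}_n$ with the required foliation property, verify membership in $\mathcal{Q}_n$ using Proposition~\ref{prop chi} and Lemma~\ref{QHS}, and handle the foliation side via Theorem~\ref{exist_taut} together with Corollaries~\ref{1/2} and~\ref{1/3}.

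For part $(i)$ I would use
$$M_{n,k} := M(-1,\, 1/k,\, 1/k,\, \dots,\, 1/k)$$
with $n$ copies of $1/k$, where $k$ ranges over integers satisfying $k \geq 3$, $k > n/(n-2)$, and $k \neq n$. Since $e(M_{n,k}) = (n-k)/k \neq 0$ and $a_1 \cdots a_n\, e(M_{n,k}) = k^{n-1}(n-k)$ has absolute value at least $2$ on this range, Lemma~\ref{QHS} gives that $M_{n,k}$ is a non-integral \QHS; the inequality $k > n/(n-2)$ makes $\chi_{M_{n,k}} = 2 - n + n/k$ negative, so Proposition~\ref{prop chi} gives the $\widetilde{SL}_2(\mathbb{R})$-geometry. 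Since every $b_i/a_i = 1/k < 1/2$, Corollary~\ref{1/2} supplies a taut \c2-foliation; by Theorem~\ref{eq_taut_hor} this foliation is isotopic to a horizontal one, which Theorem~\ref{ehn}$(3)$ then lets me choose analytic. Finally, the horizontal analytic foliation is taut by Lemma~\ref{easy}.

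For parts $(ii)$ and $(iii)$ I would use the single family
$$N_{n,a} := M(-1,\, 2/3,\, 2/3,\, 1/a,\, 1/a,\, \dots,\, 1/a)$$
with $n-2$ copies of $1/a$, for integers $a$ with $a > 3(n-2)/(3n-8)$ and $a \geq 2$ (so $a \geq 4$ when $n=3$, and $a \geq 2$ when $n \geq 4$). Then $e(N_{n,a}) = 1/3 + (n-2)/a > 0$ together with the fact that $a_1 \cdots a_n\, e(N_{n,a}) = 3a^{n-2} + 9(n-2)a^{n-3} > 1$ shows $N_{n,a}$ is a non-integral \QHS, and the lower bound on $a$ yields $\chi_{N_{n,a}} < 0$ and hence the $\widetilde{SL}_2(\mathbb{R})$-geometry. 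Since $b_1/a_1 = b_2/a_2 = 2/3 \geq 1/2$, any pair $(m,\alpha)$ satisfying Property~$(*)$ would have to obey both $(m-\alpha)/m > 2/3$ and $\alpha/m > 2/3$, forcing $\alpha < m/3$ and $\alpha > 2m/3$ simultaneously, which is absurd. Consequently Theorem~\ref{exist_taut} forbids any taut \c2-foliation when $n > 3$, and any taut \C0-foliation when $n = 3$, which yields $(ii)$ and $(iii)$ simultaneously.

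The step I expect to be the main obstacle is the bookkeeping needed to show that each family contains \emph{infinitely many pairwise non-homeomorphic} manifolds; this reduces to comparing normalized Seifert invariants modulo the fiber-preserving involution $\Phi$ from Section~$2$, which sends $b_0 = 1$ to $b_0 = n-1$ and thus cannot identify two members of either family corresponding to different parameters $k$ or $a$. A lesser technical point is ruling out the finitely many small parameters where $a_1 \cdots a_n\, e(M)$ could degenerate to $\pm 1$ (making $M$ a \ZHS) or to $0$ (making $M$ not a rational homology sphere); both degenerations are handled by the explicit lower bounds stated above.
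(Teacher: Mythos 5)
Your proposal is correct and follows essentially the same strategy as the paper: exhibit explicit infinite families with $b_0=1$, check the non-integral \QHS\ condition via Lemma~\ref{QHS}, check $\chi_M<0$ via Proposition~\ref{prop chi} for the $\widetilde{SL}_2(\mathbb{R})$-geometry, and settle the foliation side through Property~$(*)$ and Theorem~\ref{exist_taut} (with Theorem~\ref{ehn} for analyticity). Your concrete families differ from the paper's $\mathcal{M}_1,\dots,\mathcal{M}_4$ (and are arguably cleaner, since repeated multiplicities rule out the \ZHS\ case immediately and the pair of slopes $2/3$ kills Property~$(*)$ outright), but this is a cosmetic difference, not a different method.
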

\begin{proof}
The proof of Theorem \ref{geom_taut rational} is an immediate consequence of the two following lemmata.
%%%%%%%%
Let $n$ be a positive integer greater than two.
Let  $\mathcal{M}(n)$ be the family of 
Seifert fibered $3$-manifolds $M$ with $n$ exceptional fibers such that
$\dps M= M(-1,\frac1 2,\frac{b_2}{a_2},\frac{b_3}{a_3}, \dots,\frac{b_n}{a_n})$
and 
the exceptional slopes are ordered in the following way~:
$\dps\frac12>\frac{b_2}{a_2}\geq\frac{b_3}{a_3}\geq\dots\geq\frac{b_n}{a_n}$.
%%%%%%%
\begin{LM}\label{infinite fam1}
Let $n$ be a positive integer greater than two.
We consider the following families of infinite 
Seifert fibered $3$-manifolds.

\begin{center}
$\dps\mathcal{M}_1(n)=\bigg\{M\in\mathcal{M}(n),\ \textit{with\ }
\frac{b_2}{a_2}=\frac35, \ n> 3\bigg\}
$;

$\dps\mathcal{M}_1(3)=\bigg\{M\in\mathcal{M}(3),\ \textit{with\ }\frac{b_2}{a_2}=\frac35, \hbox{\ and\ } a_3\geq 4\bigg\}
$;

$\dps\mathcal{M}_2(n)= \bigg\{M\in\mathcal{M}(n),\ \textit{with\ }
\frac{b_2}{a_2}=\frac25,\ \dps \frac{b_3}{a_3}> \frac1{5}, n> 3\bigg\}
$;

$\dps\mathcal{M}_2(3)= \bigg\{M\in\mathcal{M}(3),
\ \textit{with\ }\frac{b_2}{a_2}=\frac25,
\ \dps \frac{b_3}{a_3}> \frac1{5},
\hbox{\ and\ }a_3\geq 4\bigg\}
$.
\end{center}
If $M\in\mathcal{M}_1(n)\cup\mathcal{M}_2(n)$,
then $M\in\mathcal{Q}_n$.
In particular, $M$  is a non-integral homology $3$-sphere,
which admits the $\widetilde{SL}_2(\mathbb{R})$-geometry,
and $M$ does not admit a taut \c2-foliation.
\\\\
Furthermore, if $M\in\mathcal{M}_1(3)\cup\mathcal{M}_2(3)$,
then $M\in\mathcal{Q}_3$,
and $M$ does not admit a taut \C0-folitation.
\end{LM}
\begin{proof}
First, considering Lemma~\ref{QHS},
we may check easily that if $M\in\mathcal{M}_1(n)\cup\mathcal{M}_2(n)$ then $M$ is a \QHS\ but not a \ZHS.

Indeed if $M\in\mathcal{M}_1(n)$, then
 $\dps e(M)>-1+1/2+3/5$.

If $M\in\mathcal{M}_2$(n), then
 $\dps e(M)>-1+1/2+2/5+1/5$.

In both cases, $e(M)>1/10$, so $e(M)\not=0$; hence, $M$ is a \QHS.
 
On the other hand, if $\dps e(M)=\frac{\varepsilon}{a_1a_2\dots a_n}$ 
(where $\varepsilon=\pm 1$)
then $\dps e(M)<\frac1{10a_3}$; 
\\

which is a contradiction.
Then, $M$ is not a \ZHS.
\\\\
Now, we check that they all have the $\widetilde{SL}_2(\mathbb{R})$-geometry.

If $n\geq 4$, then it is a direct consequence of Proposition ~\ref{4}.

If $n=3$, that follows from $\dps \sum_{i=1}^3\frac{1}{a_i}<1$ (here, we need that $a_3\geq 4$).

In conclusion, $\mathcal{M}_1(n)\cup\mathcal{M}_2(n)
\subset \mathcal{Q}_n$ (for $n\geq 3$).
\\\\
Finally, we check that they do not admit a taut \c2-foliation.

If $M\in\mathcal{M}_1(n)$, 
Corollary \ref{1/3} $(1)$ implies that $M$ cannot admit a taut \c2-foliation.

If $M\in\mathcal{M}_2(n)$, then $\dps\frac{b_2}{a_2}$ and $\dps\frac{b_3}{a_3}$
both are greater than $1/5$; therefore $(iii)$ implies that $m\leq 4$.
Thus, $\alpha\in\{1, 2, 3\}$. In all cases, $(i)$ or $(ii)$ cannot be satisfied.
\\\\
Furthermore, by Theorem~\ref{eq_taut_hor},
if $M\in\big(\mathcal{M}_1(3)\cup\mathcal{M}_2(3)\big)$ and 
$M$ admits a taut \C0-folitation, then the foliation can be isotoped to be horizontal;
which is impossible for $M$ in $\mathcal{M}_1(3)\cup\mathcal{M}_2(3)$.
\end{proof}
%%%%%%%%%%%%%%
%%%%%%%%%%%%%%
\begin{LM}\label{infinite fam2}
Let $n$ be a positive integer greater than two.
Let  $\mathcal{M}_3$ and $\mathcal{M}_4(n)$ be the two following families of infinite 
Seifert fibered $3$-manifolds.
\begin{center}
$\dps\mathcal{M}_3=\bigg\{M\big(-1, \frac1 2, \frac2 5, \frac{k}{7k+1} \big)
\in\mathcal{M}(3),\ k\in\mathbb{Z},\ k\geq 1\bigg\}$;

$\dps\mathcal{M}_4(n)= \bigg\{
M\big(-1, \frac1 2, \frac2 5,\frac{1}{10}, \frac{b_4}{10b_4+1},
\dots, \frac{b_n}{10b_n+1} \big)\in\mathcal{M}(n)
,\ n>3\bigg\}$.
\end{center}
If $M\in\mathcal{M}_3\cup\mathcal{M}_4(n)$,
then $M\in\mathcal{Q}_n$ and is a non-integral Seifert fibered $3$-manifold,
which admits the $\widetilde{SL}_2(\mathbb{R})$-geometry
and a taut analytic foliation.
\end{LM}
\begin{proof}
First, considering Lemma~\ref{QHS},
we can check that if $M\in\mathcal{M}_3\cup\mathcal{M}_4$,
then $M$ is a \QHS\ but not a \ZHS.

Indeed,
if $M\in\mathcal{M}_3$, then
 $\dps e(M)>-1+1/2+2/5+1/8$, 
 
 i.e. $e(M)>1/40$; so $e(M)\not=0$ and $M$ is a \QHS.
 
If $\dps e(M)=\frac{\varepsilon}{a_1a_2a_3}$ 
(where $\varepsilon =\pm 1$)
then $e(M)<1/70$;
\\

which is not possible so $M$ is not a \ZHS.
\\

 Similarly,
 if $M\in\mathcal{M}_4$, then
 $\dps e(M)>-1+1/2+2/5+1/10+1/11$, 
 
 i.e. $e(M)>1/11$; so $e(M)\not=0$ and $M$ is a \QHS.

If $\dps e(M)=\frac{\varepsilon}{a_1a_2\dots a_n}$ then $e(M)<1/100$,
which is  not possible so $M$ is not a \ZHS.
\\\\
Now, we check that they all admit the $\widetilde{SL}_2(\mathbb{R})$-geometry.

If $n\geq 4$, then it is a direct consequence of Proposition ~\ref{4}.

If $n=3$, that follows from $\dps \sum_{i=1}^n\frac{1}{a_i}<1$.
\\\\
Finally, if we choose $\alpha=3$ and $m=7$ 
then $(m, \alpha)$ trivially satisfies Property~$(*)$;
which implies that they all admit a taut analytic foliation
(by Theorems~\ref{ehn} and \ref{exist_taut}).
\end{proof}
\ \hfill{\it End of proof of Theorem \ref{geom_taut rational}}

\end{proof}
%
%\qed
%%%%%%%%%%%%%%%
%%% SECTION   %%%%%
%%%%%%%%%%%%%%%
\section{Proof of Main Theorem $1$}
This section is almost entirely devoted to the proof of 
Proposition~\ref{main}, which implies Main Theorem $1$,
as it will be shown bellow.

We may recall here (see Section~$2$)
that if $M$ is a Seifert fibered $3$-manifold,
then $M=M(-b_0, b_1/a_1, \dots, b_n/a_n)$, where $b_0$ is a positive integer and
$0<b_i<a_i$ for all $i\in\{1,\dots,n\}$.
Note that $n$ has to be greater than $2$ (otherwise $M$ cannot be a \ZHS\
but $\mathbb{S}^3$).

If $M$ is also a \ZHS, then two rational coefficients cannot be the same,
see Remark~\ref{rk ZHS};
therefore we may re-order them so that
$b_1/a_1> b_2/a_2>\dots> b_n/a_n$.

Thus, two positive integers $m$ and $\alpha$ satisfy Property~$(*)$
(for these rational coefficients) if and only if~:
$$
\fbox{
\vbox{\hsize8.3truecm
\noindent
$\alpha < m$ and 
$(i)$ to $(iii)$ of Property~$(*)$ are all satisfied.
}}
$$
%%%%% MAIN PROP %%%%
\begin{PR}\label{main}
Let $n$ be a positive integer and $M$ be a \ZHS\ Seifert fibered $3$-manifold,
which is neither homeomorphic to $\mathbb{S}^3$ nor to $\mathcal{P}$.
\\
We assume that $M=M(-1, b_1/a_1, \dots, b_n/a_n)$,
where~:

- $0<b_i<a_i$ for all $i\in\{1,\dots,n\}$,
and;

- $b_1/a_1> b_2/a_2>\dots> b_n/a_n$.
\\
Then there exist two positive integers
$m$ and $\alpha$ which satisfy Property~$(*)$.
\end{PR}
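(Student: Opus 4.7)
Writing $\beta_i = b_i/a_i$, Property $(*)$ asks for a rational $\alpha/m$ in the open interval $I := (\beta_2,\, 1 - \beta_1)$ together with the bound $m < a_3/b_3$ (so that $\beta_3 < 1/m$; the inequalities $\beta_i < 1/m$ for $i > 3$ then follow from the strict ordering of the $\beta_i$). The plan is to case-split on $\beta_1$ and use the rigidity coming from the \ZHS\ condition to make the analysis finite.

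The first step is the easy reduction. If $\beta_1 < 1/2$, then all $\beta_i < 1/2$ and $(m,\alpha) = (2,1)$ trivially satisfies Property $(*)$, so we may assume $\beta_1 \geq 1/2$. By Lemma~\ref{QHS}~(1) and Remark~\ref{rk ZHS}, the $a_i$ are pairwise coprime distinct integers $\geq 2$, and $\sum_{i=1}^n \beta_i = 1 + \varepsilon/(a_1 a_2 \cdots a_n)$ for some $\varepsilon \in \{-1,+1\}$. In particular $\sum \beta_i$ is within $1/(a_1\cdots a_n)$ of $1$, and the length of $I$ equals $\sum_{i \geq 3}\beta_i - \varepsilon/(a_1\cdots a_n)$, which is positive and typically of order $\beta_3$.

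The main case analysis splits on $(a_1, b_1)$. If $a_1 = 2$, then $\beta_1 = 1/2$ and the remaining $a_i$ are odd coprime integers $\geq 3$. If additionally $\beta_2 < 1/3$, take $(m,\alpha) = (3,1)$. Otherwise $\beta_2 \geq 1/3$ forces $(a_2, b_2) \in \{(3,1),(5,2),(7,3),\ldots\}$; for each, the \ZHS\ equation pins down a finite set of possibilities for $(a_3, b_3)$ (and for the remaining data when $n \geq 4$), and in each case I exhibit a valid $(m, \alpha)$. For instance, when $(a_1, a_2) = (2,3)$ the next allowed $a_3$ (coprime to $6$ and giving $\beta_3 < 1/3$) is either $5$ (Poincar\'e) or $\geq 7$; in the latter $(m,\alpha) = (5,2)$ works since $2/5 \in (1/3, 1/2)$ and $1/5 > 1/a_3$. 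If $a_1 \geq 3$, then $\beta_1 > 1/2$ and the equation $\sum \beta_i \approx 1$ again restricts $(a_2, b_2, a_3, b_3, \ldots)$ to a short list, each handled by a tailored choice of $(m, \alpha)$. The unique configuration where no choice works is the Poincar\'e sphere $\mathcal{P} = M(-1, 1/2, 1/3, 1/5)$, which is excluded by hypothesis.

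The main obstacle is the combinatorial verification that $\mathcal{P}$ is the unique exception. The delicate configurations live near the boundary where $\beta_1 + \beta_2 + \beta_3 \approx 1$, so that $I$ has length comparable to $1/(a_1\cdots a_n)$ and the upper bound $m < a_3/b_3$ becomes tight; here the arithmetic of the \ZHS\ relation and the pairwise coprimality of the $a_i$ must be exploited carefully to ensure no case is missed. For $n \geq 4$ the extra $\beta_i$'s enlarge $I$ and supply additional flexibility, making these cases easier than $n = 3$.
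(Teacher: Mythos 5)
Your opening reduction (if $\beta_1<1/2$ take $(m,\alpha)=(2,1)$; for $i>3$ the bound $\beta_i<1/m$ follows from $\beta_3<1/m$ and the ordering) is sound, and your remark that $n\geq 4$ is easier is in the spirit of the paper's Step~1, which merges $b_3/a_3,\dots,b_n/a_n$ into $b_3'/a_3'$ and invokes the case $n=3$. But the core of your plan rests on a false finiteness claim. For $n=3$ and fixed $(a_1,b_1,a_2,b_2)$ the \ZHS\ relation reads $ab_3-ba_3=\varepsilon$ with $a=a_1a_2$ and $b=a-b_1a_2-b_2a_1$, which is a Bezout equation with \emph{infinitely} many admissible solutions $(a_3,b_3)$ (e.g.\ for $(a_1,b_1,a_2,b_2)=(2,1,3,1)$ every $a_3=6b_3\mp1$ occurs); likewise the set of admissible $(a_2,b_2)$ with $1/3\leq\beta_2<1/2$, or with $a_1\geq3$, is infinite, not ``a short list.'' So there is no finite case check in which $\mathcal{P}$ could be isolated as the unique exception, and your sample verifications (such as $(m,\alpha)=(5,2)$ when $(a_1,a_2)=(2,3)$) cannot substitute for a uniform construction of $(m,\alpha)$ over these infinite families.

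The deeper issue is that the ``delicate configurations near the boundary where $\beta_1+\beta_2+\beta_3\approx1$'' are not exceptional: by Lemma~\ref{QHS}, \emph{every} $n=3$ case satisfies $\beta_1+\beta_2+\beta_3=1+\varepsilon/(a_1a_2a_3)$, so the target interval $I=(\beta_2,\,1-\beta_1)$ always has length $\alpha_1-\alpha_2=b/a=\beta_3-\varepsilon/(a_1a_2a_3)$, i.e.\ essentially equal to the very quantity $\beta_3$ that must stay below $1/m$. One is therefore always asked to place a fraction $\alpha/m$ strictly inside an open interval whose length is about $1/m$; a pigeonhole or mediant argument does not deliver this, and it is precisely here that the paper works: for $\varepsilon=-1$ it sets $N=[a/b]$ and chooses $(\alpha,m)$ among $([N\alpha_1],N)$, $([N\alpha_1],N-1)$, $(N\alpha_1-1,N-1)$ according to fractional-part comparisons (Lemmata~\ref{m et alpha}, \ref{(I) et (II)}, \ref{III}), while for $\varepsilon=+1$ the constraint $1/m>\beta_3$ is strictly stronger than $m<a/b$ and forces the Bezout analysis $au-bv=1$ with the subcases $u\neq1$, $u=b=1$, $u=1\neq b$ of Step~4. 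Your proposal names this obstacle but provides no mechanism to overcome it, so as written it is a plan with a genuine gap at its center rather than a proof.
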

\bigskip
\begin{center}
{\sc Proof of Main Theorem 1}
\end{center}
First of all, if $M$ is either homeomorphic to $\mathbb{S}^3$ or to the Poincar\'e sphere 
$\mathcal{P}$, then we may recall that $M$ cannot admit a taut foliation.

We assume that $M$ is neither homeomorphic to $\mathbb{S}^3$ 
nor to the Poincar\'e sphere 
$\mathcal{P}$. 
We want to show that $M$ always admit a taut analytic foliation.

Let $M=M(-b_0, b_1/a_1, \dots, b_n/a_n)$,
where $b_0$ is a positive integer and
$0<b_i<a_i$ for all $i\in\{1,\dots,n\}$.

First, we may note that
Corollary~\ref{hor} claims that if 
$b_0\in\{2,\dots,n-2\}$ then $M$ admit a horizontal analytic foliation,
which are taut \c2-foliation.
Then, we assume for the following that $b_0\not\in\{2,\dots,n-2\}$.
 
On the other hand, since $M$ is a \ZHS,
Lemma~\ref{QHS} $(1)$ implies that
$$b_0=\dps\sum_{i=1}^{n}\frac{b_i}{a_i}+\frac{\varepsilon}{a_1a_2\dots a_n},
\hbox{\ where $\varepsilon\in\{-1,+1\}$}.$$
Then, the property $0<b_i/a_i< 1$ for all $i\in\1n$,
implies that $0<b_0<n$.
By the fiber-preserving homeomorphism $\Phi$ (see Section~$2$)
we may assume that $b_0=1$.
Hence, Proposition~\ref{main} implies that there exists a pair of positive integers
$(m,\alpha)$ which satisfy Property~$(*)$.
This implies that $M$ admits a horizontal foliation (Theorem~\ref{exist_taut})
then a taut analytic foliation (Lemma~\ref{easy} and Theorem~\ref{ehn});
which ends the proof of Main Theorem~1.
\qed
\\\\
The remaining of the paper is entirely devoted to the
proof of Proposition~\ref{main}.
%%%%%% DEBUT DE LA PREUVE  %%%%%%
\\\\
{\bf Schedule of the proof of Proposition~\ref{main}}
\\
The proof of Proposition~\ref{main} is organized in four steps, as follows.
\begin{itemize}
\item[\bf Step 1~:]
If Proposition \ref{main} is true for $n=3$ then it is true for all $n\geq 3$.
\item[\bf Step 2~:]
Considering $n=3$ gives common notations
and results for the following.
\item[\bf Step 3~:]
We show Proposition \ref{main} for $n=3$ and $\epsilon =-1$.
\item[\bf Step 4~:]
We show Proposition \ref{main} for $n=3$ and $\epsilon =1$.
\end{itemize}

\medskip
Before starting the proof, we fix some notations
and conventions for all the following of the paper.
\\\\
{\bf Notations - Conventions}
\\
We keep the previous notations.

Let $M=M(-1, b_1/a_1, \dots, b_n/a_n)$ be a \ZHS\
Seifert fibered $3$-manifold,
where
$0<b_i<a_i$ for all $i\in\{1,\dots,n\}$.
\\

By Lemma \ref{QHS}, $M$ is a \ZHS\ if and only if~:
\\

\begin{center}
$(E1)\qquad \dps \sum_{i=1}^n \frac{b_i}{a_i}= 1+
\frac{\epsilon}{a_1.a_2.\dots.a_n}$, \ where $\epsilon\in\{-1,1\}$
\end{center}

\medskip\noindent
Let $\hat{a}_i$ (for $i\in\1n$), $\alpha_1, \alpha_2, a'_3, b'_3$ be the following positive rational numbers.
Note that all are positive integers but $\alpha_1, \alpha_2$, which are rational numbers.
$$
\fbox{
\vbox{\hsize5.1truecm
\noindent
$
\begin{array}{cc}
\dps \alpha_1=1-\frac{b_1}{a_1}&
\dps \alpha_2=\frac{b_2}{a_2}\\
\\
\dps\hat{a}_i= \frac{a_3\dots a_n}{a_i} &\forall i\in\{3,\dots,n\}\\
\\
\dps b_3'=\sum_{i=3}^nb_i\hat{a}_i&
\dps a_3'=a_3 \dots a_n\\
\end{array}
$
}}
$$
Thus, 
$$\dps \frac{b_3'}{a_3'}=\sum_{i=3}^n \frac{b_i}{a_i}.$$

\medskip
\noindent
Now, we fix the following inequalities by denoting them from $(1)$ to $(6)$.
The former three are trivially always true.
The last three are true when $n=3$,
see Claim~\ref{n=3}; they concern Steps $2$ to~$4$.

$$
\fbox{
\vbox{\hsize5.7truecm
\noindent
$
\begin{array}{c}
(1) \quad\dps\frac{b_1}{a_1}>\frac{b_2}{a_2}>\dots>\frac{b_n}{a_n}\\
\\
(2) \quad\dps\frac{b_1}{a_1}\geq\frac1 2\qquad \qquad
(3) \quad\dps\alpha_1\leq \frac{b_1}{a_1}\\
\end{array}
$
}}
$$

$$
\fbox{
\vbox{\hsize10.1truecm
\noindent
When $n=3$~:
\\\\
\noindent
$
\begin{array}{c}
(4) \quad \dps \frac{b_2}{a_2}<\frac1 2 \qquad \qquad
(5) \quad\dps \frac{b_3}{a_3}<\frac1 4
\qquad \qquad
(6)\quad \alpha_2>\alpha_1-\alpha_2\\
\end{array}
$
}}
$$

\noindent
$(1)$ up to reordering;
\\
$(2)$ by Corollary~\ref{1/2}, which implies $(3)$;
\\
$(4)$ to $(6)$, by Claim~\ref{n=3}.

\medskip
\noindent
When $n=3, \ a'_3=a_3$ and $b'_3=b_3$, and $(E_1)$ is equivalent to~:

$$
\fbox{
\vbox{\hsize8.6truecm
$
\noindent
(E2)\qquad \dps\frac{b_3}{a_3}=\alpha_1-\alpha_2+ \frac{\epsilon}{a_1a_2a_3},
\hbox{\it \ where\ } \epsilon\in\{-1,1\}.
$
}}
$$

%%%%%%%
\begin{CL}\label{n=3}
If $n=3$ then 
$\dps \frac{b_2}{a_2}<\frac1 2$,
$\dps \frac{b_3}{a_3}<\frac1 4$ and $\alpha_2>\alpha_1-\alpha_2$.
\end{CL}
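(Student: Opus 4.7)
The plan rests on three ingredients: the \ZHS{} identity $(E_1)$ specialized to $n=3$, namely
\[
\frac{b_1}{a_1}+\frac{b_2}{a_2}+\frac{b_3}{a_3}=1+\frac{\epsilon}{a_1a_2a_3},\qquad \epsilon\in\{-1,+1\};
\]
the standing strict ordering in $(1)$ (which is strict by Remark~\ref{rk ZHS}, since a \ZHS{} forces the $a_i$ to be pairwise coprime and in particular distinct) together with the lower bound $b_1/a_1\geq 1/2$ in $(2)$; and the \emph{integer-gap estimate}
\[
\frac{b_2}{a_2}-\frac{b_3}{a_3}=\frac{b_2a_3-b_3a_2}{a_2a_3}\,\geq\,\frac{1}{a_2a_3},
\]
coming from the fact that $b_2a_3-b_3a_2$ is a positive integer. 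I would establish $(4)$ first, then the purely arithmetic $(6)$, and finally the sharper $(5)$.

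For $(4)$, I argue by contradiction: if $b_2/a_2\geq 1/2$, then the strict ordering forces $b_1/a_1>1/2$, so $b_1/a_1+b_2/a_2>1$, and the \ZHS{} identity yields $b_3/a_3<\epsilon/(a_1a_2a_3)\leq 1/(a_1a_2a_3)$, which contradicts $b_3/a_3\geq 1/a_3$ (since $a_1a_2\geq 4$). For $(6)$, rearranging via $(E_2)$ shows that $\alpha_2>\alpha_1-\alpha_2$ is equivalent to $b_2/a_2>b_3/a_3-\epsilon/(a_1a_2a_3)$; this is immediate from $(1)$ when $\epsilon=+1$, and for $\epsilon=-1$ it follows from the integer gap together with $a_1\geq 2$, using
\[
\frac{b_2}{a_2}\,\geq\,\frac{b_3}{a_3}+\frac{1}{a_2a_3}\,>\,\frac{b_3}{a_3}+\frac{1}{a_1a_2a_3}.
\]

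For $(5)$, I combine $(2)$ with $(E_1)$ to write $b_2/a_2+b_3/a_3\leq 1/2+1/(a_1a_2a_3)$ and then subtract the integer gap to obtain
\[
2\cdot\frac{b_3}{a_3}\,\leq\,\frac{1}{2}+\frac{1}{a_1a_2a_3}-\frac{1}{a_2a_3}\,=\,\frac{1}{2}-\frac{a_1-1}{a_1a_2a_3}.
\]
Since $a_1\geq 2$ the subtracted quantity is strictly positive, which gives $b_3/a_3<1/4$.

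The main technical point---the part that would go wrong with a naive argument---is the integer-gap estimate. Without it one obtains only $b_3/a_3\leq 1/4+O(1/(a_1a_2a_3))$, and borderline configurations (in particular $b_3/a_3=1/4$ exactly, which is not immediately ruled out by $(E_1)$ alone and would otherwise require a separate case analysis using that $4\mid a_3$ forces $a_1,a_2$ odd) are not excluded. The gap of size $1/(a_2a_3)$ is larger than the correction $\epsilon/(a_1a_2a_3)$ by a factor $a_1\geq 2$, and this factor is exactly what tips the balance in $(5)$ and in the $\epsilon=-1$ half of $(6)$.
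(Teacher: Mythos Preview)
Your proof is correct. For $(6)$ your argument coincides with the paper's almost verbatim (the paper also invokes $(E_2)$ and the integer gap $b_2a_3\geq b_3a_2+1$). For $(4)$ and $(5)$, however, you take a genuinely different and more streamlined route.

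The paper proves $(4)$ and $(5)$ together via a case split on the sign of $b_2/a_2+b_3/a_3-1/2$. When this sum is $<1/2$, both inequalities are immediate from the ordering; when it is $\geq 1/2$, the paper writes $2b_1=a_1+r_1$ and $2(b_2a_3+a_2b_3)=a_2a_3+r$ with $r_1,r\geq 0$, and deduces from $(E_1)$ the Diophantine relation $r_1a_2a_3+ra_1=2\epsilon$. This forces $r_1=0$, $r=1$, $a_1=2$, $\epsilon=+1$, i.e.\ pins down the unique borderline configuration, from which $(4)$ and $(5)$ are then read off. Your approach instead uses the single inequality $b_2/a_2-b_3/a_3\geq 1/(a_2a_3)$ uniformly: subtracting this gap from $b_2/a_2+b_3/a_3\leq 1/2+\epsilon/(a_1a_2a_3)$ gives $2b_3/a_3\leq 1/2-(a_1-\epsilon)/(a_1a_2a_3)<1/2$ in one stroke, with no case split and no need to identify the extremal parameters. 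The paper's argument buys the extra information that equality $b_2/a_2+b_3/a_3=1/2+\epsilon/(a_1a_2a_3)$ can only occur when $a_1=2$, $b_1=1$, $\epsilon=+1$; your argument buys brevity and a cleaner logical structure.
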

\begin{proof}
Since $b_1/a_1\geq1/2$, there exists a non-negative integer $r_1$ such that
$$2b_1= a_1+r_1.$$
If $b_2/a_2+b_3/a_3<1/2$ then $(1)$ implies $(4)$ and $(5)$.
So, we may suppose that $b_2/a_2+b_3/a_3\geq1/2$.
Hence, there exists a non-negative integer $r$ such that~:
$$2(b_2a_3+a_2b_3)=a_2a_3+r.$$
Then $(E2)$ implies that
$\displaystyle\frac{r_1}{2a_1}+\frac{r}{2a_2a_3}= \frac{\epsilon}{a_1a_2a_3}$,  so
$r_1a_2a_3+ra_1=2\epsilon$.
\\\\
Therefore, $r_1= 0,\ r=1,\ a_1=2$, and $\epsilon=+1$. 
\\
Thus, $b_1/a_1=1/2$ and $(1)$ implies $(4)$ and $a_3b_2>a_2b_3$.
\\
Then $2(b_2a_3+a_2b_3)=a_2a_3+r$ implies
$1+a_2a_3>4a_2b_3$ and so $a_2a_3\geq 4a_2b_3$, which is equivalent to
$1/4\geq b_3/a_3$.
\\
Since $a_1=2$ and the $a_i$'s are pairwise relatively prime, 
$1/4> b_3/a_3$ which proves~$(5)$.
\\\\
By $(E2)$, $\alpha_1-\alpha_2=\dps\frac{b_3}{a_3}-\frac{\epsilon}{a_1a_2a_3}$.
\\\\
On the other hand, $(1)$ implies~: $b_2a_3\geq b_3a_2+1$ (since they are positive integers).
\\\\
Therefore,
$\dps\alpha_2=\frac{b_2}{a_2}\geq\frac{b_3}{a_3}+\frac{1}{a_2a_3}>\frac{b_3}{a_3}-\frac{\epsilon}{a_1a_2a_3}$ which implies $(6)$.
\end{proof}
%%%%% STEP 1%%%%%
\subsection{Step 1~: From $n=3$ to $n>3$}
\quad
\\
We suppose that Proposition \ref{main} is satisfied for $n=3$.
Now, we assume that $n\geq 4$ and $M=M(-1, b_1/a_1,\dots, b_n/a_n)$ is a \ZHS.
We want to show that Property $(*)$ is satisfied for the rational coefficients of 
the Seifert invariant of $M$.
\\
Let $M'=M(-1, b_1/a_1,b_2/a_2,b_3'/a_3')$.
Note that $(E_1)$ is satisfied because $M$ is a \ZHS;
therefore $M'$ is also a \ZHS, by the definition of $b_3'/a_3'$.
\\

We separate the proof according to either
$\displaystyle\frac{b_3'}{a_3'}<\frac{b_2}{a_2}$, or
$\displaystyle\frac{b_2}{a_2}<\frac{b_3'}{a_3'}$.

Note that $\dps \frac{b_2}{a_2}\neq\frac{b_3'}{a_3'}$
because the $a_i$'s are pairwise relatively prime.
%Note also that~:
%$$(6)\qquad \dps\alpha_2>\frac{\epsilon}{a_1a_2a_3'}.$$
%Indeed, $\dps\alpha_2=\frac{b_2}{a_2}$, so
%$\dps\alpha_2 >\frac{1}{a_1a_2a_3'}\geq \frac{\epsilon}{a_1a_2a_3'}$.
\\\\
{\sc Case $1$~:}
$\displaystyle\frac{b_3'}{a_3'}<\frac{b_2}{a_2}$.
\\\\
First, we check that $M'\not\cong \mathcal{P}$.
Indeed, otherwise
$\dps \frac{b_3'}{a_3'}=\frac1 5$, so $a_3'=a_3\dots a_n=5$, with $n\geq 4$;
a contradiction.
Then, there exist positive integers $m$ and $\alpha$ such that  $\alpha<m$ and~:
\begin{itemize}
\item[(i)]
$\displaystyle\frac{b_1}{a_1}<\frac{m-\alpha}{m}$;\\
\item[(ii)]
$\displaystyle\frac{b_2}{a_2}< \frac{\alpha}{m}$; and\\
\item[(iii)]
$\displaystyle\frac{b_3'}{a_3'}<\frac{1}{m}$.
\end{itemize}
By definition, $\dps\frac{b_i}{a_i}<\frac{b_3'}{a_3'}$  for $i\in\{3,4,\dots,n\}$,
then the same positive integers $m$ and $\alpha$
satisfy Property $(*)$ for 
the rational coefficients $\dps \frac{b_i}{a_i}$ (for $i\in\1n$).
\\\\
{\sc Case $2$~:}
$\displaystyle\frac{b_2}{a_2}<\frac{b_3'}{a_3'}$.
\\\\
We repeat the same argument.
\\
Similarly, $M'\not\cong \mathcal{P}$; otherwise
$\dps \frac{b_3'}{a_3'}=\frac1 3$, so $a_3\dots a_n=3$, with $n\geq 4$;
a contradiction.
Then, there exist 
positive integers $m$ and $\alpha$ such that  $\alpha<m$ and~:
\begin{itemize}
\item[(i)]
$\displaystyle\frac{b_1}{a_1}<\frac{m-\alpha}{m}$;\\
\item[(ii)]
$\displaystyle\frac{b_3'}{a_3'}< \frac{\alpha}{m}$; and\\
\item[(iii)]
$\displaystyle\frac{b_2}{a_2}<\frac{1}{m}$.
\end{itemize}
Since $b_1/a_1>b_2/a_2> \dots>b_n/a_n$, we obtain that $\dps\frac{b_i}{a_i}<\frac{1}{m}$ for $i\in\{2,3,\dots,n\}$, which implies that 
$m$ and $\alpha$ can be chosen so that they 
satisfy Property $(*)$ for 
the rational coefficients $\dps \frac{b_i}{a_i}$ (for $i\in\1n$).
%%%%% STEP 2 %%%%%
\subsection{Step 2~: General results for $n=3$}
\quad
\\
First, note that if $m$ and $\alpha$ are positive integers such that $\alpha<m$,
which satisfy Property $(*)$ then, by definition of $\alpha_1$ and $\alpha_2$~: 
$(i)$ and $(ii)$ of Property $(*)$ are respectively 
equivalent to $(I)$ and $(II)$ bellow.
$$
\left\{ \begin{array}{lll}
(I)\qquad \alpha<m\alpha_1\\
\\
(II) \qquad m\alpha_2<\alpha\\
\end{array} \right.
$$
Let 
$$
\fbox{
\vbox{\hsize5.5truecm
\begin{center}
$a=a_1a_2$ and
$b=a-b_1a_2-b_2a_1$;

\medskip
then $\dps\frac{b}{a}=\alpha_1-\alpha_2$.
\end{center}
}}
$$
%%%%%%%%%%%%
\medskip\noindent
Let $[.]$ denote the integral value,

i.e. $[x]$ is the integer $k$ such that $k\leq x< k+1$, for all real $x$.
\\
Let $N=[a/b]$, hence 
%%%%%%%%%%%%%%
$$
\fbox{
\vbox{\hsize3truecm
\begin{center}
$\dps N=\bigg[\frac{1}{\alpha_1-\alpha_2}\bigg]$.
\end{center}
}}
$$
%%%%%%% Claim m et alpha %%%%%%
\begin{LM}\label{m et alpha}
Recall that $\alpha$ and $m$ are integers.
The two following properties are satisfied.
\begin{itemize}
\item[(i)] $N\geq 4$;

\item[(ii)]
If $N\alpha_1-1\leq\alpha\leq N\alpha_1$ and $N-1\leq m$,
then $0<\alpha<m$.
\end{itemize}
\end{LM}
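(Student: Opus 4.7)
My plan is to establish (i) by proving the single inequality $\alpha_1-\alpha_2 < 1/4$, and to deduce (ii) by sandwiching $N\alpha_1$ strictly between $1$ and $N-1$, using inequalities (2) and (6).

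For (i), I start from (E2), which rearranges as $\alpha_1-\alpha_2 = b_3/a_3 - \epsilon/(a_1 a_2 a_3)$. In the case $\epsilon = +1$, the conclusion $\alpha_1-\alpha_2 < b_3/a_3 < 1/4$ is immediate from (5). In the case $\epsilon = -1$, the strict inequality $b_3/a_3 < 1/4$ together with the integrality of $b_3, a_3$ forces $4b_3 \leq a_3 - 1$, so $b_3/a_3 \leq 1/4 - 1/(4a_3)$, and hence
\[
\alpha_1 - \alpha_2 \leq \frac{1}{4} + \frac{4 - a_1 a_2}{4\, a_1 a_2 a_3}.
\]
At this point I invoke Remark \ref{rk ZHS}: the $a_i$'s are pairwise coprime and each is at least $2$, so $a_1 a_2 \geq 2\cdot 3 = 6 > 4$, the correction term is negative, and $\alpha_1-\alpha_2 < 1/4$ still holds. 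In every case $1/(\alpha_1-\alpha_2) > 4$, so $N \geq 4$. This is the main (though still mild) obstacle: it is the only place where the ZHS coprimality is used beyond the already-listed inequalities.

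For (ii), the definition $N = [1/(\alpha_1-\alpha_2)]$ gives $(N+1)(\alpha_1-\alpha_2) > 1$. I combine this with (6), rewritten as $\alpha_1-\alpha_2 < \alpha_1/2$, to get $(N+1)\alpha_1/2 > 1$, hence $N\alpha_1 > 2 - \alpha_1 \geq 3/2 > 1$ by (2). In particular $N\alpha_1 - 1 > 0$, so any integer $\alpha$ satisfying $\alpha \geq N\alpha_1 - 1$ is automatically $\geq 1$, i.e.\ $\alpha > 0$.

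For the upper bound, (2) gives $N\alpha_1 \leq N/2$, and since $N \geq 4$ from part (i) we have $N/2 \leq N - 2$. Therefore $\alpha \leq N\alpha_1 \leq N - 2 < N - 1 \leq m$, which yields $\alpha < m$ and completes the sketch. Apart from the coprimality subtlety in (i) when $\epsilon = -1$, the whole argument is a direct computation from the definitions of $N$, $\alpha_1$, $\alpha_2$ and the three inequalities (2), (5), (6).
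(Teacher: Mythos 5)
Your proposal is correct, and while part $(i)$ follows essentially the paper's route (bounding $\alpha_1-\alpha_2$ via $(E2)$ and $(5)$, with integrality closing the gap --- the paper compares the integers $4b$ and $a$ using $a_3\geq 5$, you compare $4b_3$ and $a_3$ using $a_1a_2\geq 6$, which is the same idea), your treatment of the lower bound in part $(ii)$ is genuinely different from, and simpler than, the paper's. The paper proves $\alpha\geq 1$ by first disposing of the case $b=1$ by a direct computation and then, for $b>1$, arguing by contradiction: from $\alpha=0$ it extracts, through a chain of manipulations of $(E2)$ and the floor of $a/b$, the inequality $\frac{b_3}{a_3}>\frac12+\frac{\epsilon}{ba_3}$, which is then refuted by $(5)$. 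You instead prove the uniform quantitative bound $N\alpha_1>\frac32$ directly: the definition of $N$ gives $(N+1)(\alpha_1-\alpha_2)>1$, inequality $(6)$ of Claim~\ref{n=3} rewrites as $\alpha_1-\alpha_2<\alpha_1/2$, and combining these with $\alpha_1\leq\frac12$ (from $(2)$) yields $N\alpha_1>2-\alpha_1\geq\frac32$, so any integer $\alpha\geq N\alpha_1-1>\frac12$ is at least $1$; the upper bound $\alpha\leq N\alpha_1\leq N/2\leq N-2<N-1\leq m$ is the same in both arguments. What your route buys is the elimination of the case split on $b$ and of the contradiction argument, and it replaces the $\epsilon$-sensitive use of $(E2)$ and $(5)$ by the single convexity-type inequality $(6)$, which is exactly the hypothesis the paper already needs elsewhere (Case A of Lemma~\ref{(I) et (II)}); what the paper's longer computation buys is nothing extra for this lemma, so your streamlining is a genuine improvement, with the only mild point to make explicit being that $(i)$ (hence $N\geq4$) must be proved first since your upper-bound step uses $N/2\leq N-2$.
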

\begin{proof}
{\it Proof of $(i)$}.
By $(E2)$ and $(5)$, $\alpha_1-\alpha_2<\dps\frac 1 4-\frac{\varepsilon}{aa_3}$,
i.e. $\dps 4b<a-\frac{4\varepsilon}{a_3}$.

Note that $(5)$ implies that $a_3\geq 5$ ($b_3\geq 1$).

Then (since $a$ and $b$ are positive integers) $4b\leq a$.
So $N=\dps\bigg[\frac a b\bigg]\geq 4$.
\\\\
{\it Proof of $(ii)$}.
Let $\alpha$ and $m$ such that
$N\alpha_1-1\leq\alpha\leq N\alpha_1$ and $N-1\leq m$.

Now, we can check that $0<\alpha<m$.

The fact that $\alpha<m$ is trivial because $\alpha_1\leq 1/2$.\\
Let's check that $\alpha\geq1$.
\\
First, note that if $b=1$ then
$N\alpha_1-1=\dps a\frac{(a_1-b_1)}{a_1}-1=a_2(a_1-b_1)-1=b_2a_1>1$.
\\
Then, we assume $b>1$.
\\
We proceed by contradiction. Assume $\alpha=0$, then $N\alpha_1\leq1$.
\\

$N\alpha_1\leq1\Leftrightarrow 
\dps \alpha_1\leq \frac{1}{N}$, which is $\dps\frac{1}{[a/b]}$.
\\

Hence, 
$(E_2)$ implies 
$\dps\frac{b_2}{a_2}+\frac{b_3}{a_3}\leq\frac{1}{[a/b]}+\frac{\epsilon}{a_1a_2a_3}$.
\\

Since $\dps\frac{b_3}{a_3}<\frac{b_2}{a_2}$, 
$\dps\frac{b_3}{a_3}\leq\frac{1}{2[a/b]}+\frac{\epsilon}{2a_1a_2a_3}$
and so 
$\dps 2b_3[a/b]\leq a_3+\frac{\epsilon[a/b]}{a}$.
\\

Now, $b>1$ implies $\dps\frac{[a/b]}{a}< 1$ hence $2b_3[a/b]\leq a_3$.
\\

Furthermore 
$\dps [a/b]>a/b-1\Rightarrow \frac{a}{b}-1<\frac{a_3}{2b_3}$
and so~:
$\dps ab_3-bb_3<\frac{a_3b}{2}$.
\\

Then $\dps ab_3-\frac{a_3b}{2}< bb_3$.
\\

Finally, note that $(E_2)\Leftrightarrow ab_3-a_3b=\epsilon$, i.e.
$\dps ab_3-\frac{a_3b}{2}=\epsilon+\frac{a_3b}{2}$.
\\

Hence $\dps\epsilon+\frac{a_3b}{2}< bb_3\Leftrightarrow \frac{b_3}{a_3}>\frac{1}{2}+\frac{\epsilon}{ba_3}$.
\\

By $(5)$ $\varepsilon=-1$ and
 $\dps\frac{1}{4}>\frac{1}{2}+\frac{-1}{ba_3}$, i.e. $\dps\frac{1}{ba_3}>\frac{1}{4}$,
 so $ba_3<4$.
 \\
 
This  is a contradiction because $(5)$ implies that $a_3\geq5$ and $b\geq 2$.
\end{proof}
\begin{LM}\label{(I) et (II)}
Let $r=N\alpha_1-[N\alpha_1]$,
$r'=a/b-[a/b]$ and $r''=a\alpha_1/b-[a\alpha_1/b]$.
\\
If $N\alpha_1\in\mathbb N$, let $(\alpha,m)=(N\alpha_1-1,N-1)$.
\\
If $N\alpha_1\not\in\mathbb N$ and $r'\alpha_2\leq r''<\alpha_1r'$,
let $(\alpha,m)=([N\alpha_1],N)$.
\\
Otherwise,
let $(\alpha,m)=([N\alpha_1],N-1)$.

Then the integers $m$ and $\alpha$ are positive integers which satisfy $(I)$ and $(II)$
and $\alpha<m$.
\end{LM}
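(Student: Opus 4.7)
The proof proceeds by checking, for each of the three prescribed choices of $(\alpha,m)$, that $0 < \alpha < m$ and that conditions $(I)$ and $(II)$ hold. The bound $0<\alpha<m$ follows directly from Lemma \ref{m et alpha}(ii): in every one of the three cases we have $N\alpha_1-1 \le \alpha \le N\alpha_1$ and $m \ge N-1$.

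The algebraic engine for verifying $(I)$ and $(II)$ is the identity $a(\alpha_1-\alpha_2)/b = 1$, which is immediate from the definition of $b$. Introducing $K:=a\alpha_1/b$, this gives $a\alpha_2/b = K-1$, so writing $K = [K]+r''$ the number $a\alpha_2/b$ has the same fractional part $r''$ and integer part $[K]-1$. Using $a/b = N + r'$ one then obtains the decompositions
\[
N\alpha_1 = [K] + r'' - r'\alpha_1, \qquad N\alpha_2 = [K] - 1 + r'' - r'\alpha_2,
\]
from which $[N\alpha_1]$ equals $[K]$ or $[K]-1$ according as $r'' \ge r'\alpha_1$ or $r''<r'\alpha_1$, and $r=N\alpha_1-[N\alpha_1]$ is pinned down accordingly.

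With these formulas, each of $(I)$ and $(II)$ reduces, case by case, to an elementary inequality between $r''$ and one of $r'\alpha_1,\,r'\alpha_2,\,\alpha_2(1+r'),\,\alpha_1(1+r')-1$. When $N\alpha_1 \in \mathbb{N}$, $(I)$ collapses to $\alpha_1<1$, while $(II)$ becomes $N(\alpha_1-\alpha_2)>1-\alpha_2$; this last is deduced from $(N+1)(\alpha_1-\alpha_2)>1$ (the defining property of $N$) combined with inequality $(6)$, $\alpha_2>\alpha_1-\alpha_2$. When $N\alpha_1 \notin \mathbb{N}$, the gap condition $r'\alpha_2 \le r'' < r'\alpha_1$ is precisely the range where the two candidate values of $m$ (namely $N$ or $N-1$) give fractions $\alpha/m$ on opposite sides of the boundary of the target interval $(\alpha_2,\alpha_1)$; the prescribed case split selects, in each case, the unique $m$ for which $\alpha/m$ lies strictly inside. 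For this final verification, the sharper form $\alpha_1<2\alpha_2$ of $(6)$ and the bound $N\ge 4$ from Lemma \ref{m et alpha}(i) give the slack needed to push the borderline estimates into the strict inequalities demanded by Property~$(*)$.

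The main obstacle is this last bookkeeping step: condition $(II)$ is tight on the gap boundary, so one has to verify carefully that the prescribed $m$ is the one making $\alpha/m \in (\alpha_2,\alpha_1)$ strictly, and this uses inequality~$(6)$ in an essential way.
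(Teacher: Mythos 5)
Your skeleton matches the paper's: $0<\alpha<m$ is quoted from Lemma~\ref{m et alpha}(ii), the decompositions $N\alpha_1=[a\alpha_1/b]+r''-\alpha_1 r'$ and $N\alpha_2=[a\alpha_1/b]-1+r''-\alpha_2 r'$ (via $b/a=\alpha_1-\alpha_2$) are exactly the paper's Claims~\ref{N et alpha1}--\ref{r''}, and your treatment of the case $N\alpha_1\in\mathbb N$ coincides with the paper's Case~A, including the use of inequality $(6)$ for $(II)$.

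The gap is in the two cases with $N\alpha_1\notin\mathbb N$, which are the substance of the lemma: you announce the correct reduced inequalities (comparisons of $r''$ with $\alpha_1 r'$, $\alpha_2 r'$, $\alpha_2(1+r')$, $\alpha_1(1+r')-1$) but never verify them, replacing the verification by the assertion that ``the prescribed case split selects the unique $m$ for which $\alpha/m$ lies strictly inside $(\alpha_2,\alpha_1)$.'' That assertion is exactly what must be proved, and, read with the pairing written in the lemma statement, it is false: when $\alpha_2 r'\le r''<\alpha_1 r'$ one has $[N\alpha_1]=[a\alpha_1/b]-1$, hence $N\alpha_2=[N\alpha_1]+r''-\alpha_2 r'\ge [N\alpha_1]$, so $(II)$ fails for $m=N$; on that range the working choice is $m=N-1$, where $(I)$ becomes $\alpha_1 r'-r''<1-\alpha_1$ (using $(3)$) and $(II)$ becomes $r''<\alpha_2(1+r')$ (using $(6)$), while on the complementary range $(I)$ for $m=N-1$ reads $\alpha_1<r=r''-\alpha_1 r'$, which need not hold, so one must take $m=N$ and check $(II)$ in the form $r+r'(\alpha_1-\alpha_2)<1$ separately for $r''\ge\alpha_1 r'$ and $r''<\alpha_2 r'$. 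This is precisely what the paper's Step~3 does (its Cases~B and~C, which pair the ranges with $m=N$ and $m=N-1$ in the opposite way from the lemma's wording); by deferring to ``the prescribed split'' you skip the only non-routine part and inherit that mis-pairing. Two smaller inaccuracies: the good $m$ is not unique (for $r''<\alpha_2 r'$ both $N$ and $N-1$ satisfy $(I)$--$(II)$), and $N\ge4$ plays no role in verifying $(I)$--$(II)$; the paper needs it only for $0<\alpha<m$.
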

The proof of this lemma is the main part of Step 3,
but does not depend on $\varepsilon=\pm1$.
The fact that $0<\alpha<m$ is an immediate consequence of Lemma~\ref{m et alpha}.
%%%%% STEP 3 %%%%%
\subsection{Step 3~: $n=3$ and $\epsilon=-1$}
\quad
\\\
Let us consider {\it Property $(**)$} bellow~:
 $$
(**) \left\{ \begin{array}{ll}
(I)& \alpha<m\alpha_1\\
\\
(II)& m\alpha_2<\alpha \\
\\
(III)&\displaystyle\frac{b}{a}<\frac{1}{m}
\end{array} \right.
$$
%%%%%% (**) et (*) %%%%%%
By $(E2)~:\ \dps \epsilon=-1\Rightarrow \frac{b_3}{a_3}<\frac{b}{a}$, then
Property $(**)$ implies trivially Property $(*)$,
i.e. if there exist positive integers $m$ and $\alpha$, such that $\alpha<m$ which satisfy Property $(**)$, then they satisfy Property $(*)$.
%%%%%%%%%%%%%%%
%Note that $b\neq1$ is equivalent to 
%$\dps\frac{a}{b}\notin\mathbb{N}$ because $a$ and $b$ are coprime.
%%%%%%%%%
%%%%%%% b>1 et N entier  %%%%%%%
\\
We will separate the cases where
$N\alpha_1\in\mathbb N$ or $N\alpha_1\not\in\mathbb N$.
If $N\alpha_1\not\in\mathbb N$, let

$$
\fbox{
\vbox{\hsize10.1truecm
\noindent
$
\begin{array}{c}
r=N\alpha_1-[N\alpha_1]\qquad
r'=a/b-[a/b] \qquad
r''=a\alpha_1/b-[a\alpha_1/b]
\end{array}
$
}}
$$

\medskip
\begin{CL}\label{N et alpha1}
$\dps N\alpha_1=\frac{\alpha_1}{\alpha_1-\alpha_2}-\alpha_1r'$.
\end{CL}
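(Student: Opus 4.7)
The plan is to unwind the definitions: the claim is essentially a one-line computation using how $N$ and $r'$ are set up in terms of $a/b$, combined with the already-recorded identity $b/a=\alpha_1-\alpha_2$.

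First I would recall, from the boxed definitions in Step~2, that $a=a_1a_2$ and $b=a-b_1a_2-b_2a_1$, so that
\[
\frac{b}{a}=1-\frac{b_1}{a_1}-\frac{b_2}{a_2}=\alpha_1-\alpha_2,
\]
and hence $a/b=1/(\alpha_1-\alpha_2)$. By the definition $N=[a/b]$, this means $N=\bigl[1/(\alpha_1-\alpha_2)\bigr]$, exactly as noted in the second boxed formula.

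Next I would use the definition $r'=a/b-[a/b]=a/b-N$, which rearranges to
\[
N=\frac{a}{b}-r'=\frac{1}{\alpha_1-\alpha_2}-r'.
\]
Multiplying both sides by $\alpha_1$ (which is positive, so no issue of sign) yields
\[
N\alpha_1=\frac{\alpha_1}{\alpha_1-\alpha_2}-\alpha_1 r',
\]
which is the claimed identity.

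There is no real obstacle here: the statement is a direct algebraic consequence of the definitions of $N$, $r'$, $a$, $b$, $\alpha_1$ and $\alpha_2$. The only thing to keep an eye on is the fact that $\alpha_1-\alpha_2>0$ (so that $a/b$ and the fraction $\alpha_1/(\alpha_1-\alpha_2)$ are well-defined and positive), which follows from $b_1/a_1 > b_2/a_2$, i.e.\ inequality~$(1)$, applied together with $\alpha_1\le b_1/a_1$ from $(3)$ and the relation $\alpha_1-\alpha_2=b/a$ with $b,a>0$.
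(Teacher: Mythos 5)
Your proof is correct and is essentially the paper's own argument: write $N=[a/b]=a/b-r'$ and use $b/a=\alpha_1-\alpha_2$, then multiply by $\alpha_1$. The only cosmetic quibble is your closing remark on $\alpha_1-\alpha_2>0$, which leans on $b>0$ (the very quantity equal to $a(\alpha_1-\alpha_2)$); in the paper's setting this positivity is immediate from $(E2)$, since $\alpha_1-\alpha_2=b_3/a_3-\epsilon/(a_1a_2a_3)>0$.
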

\begin{proof}
By definition of $r'$,
$\dps N\alpha_1=[a/b]\alpha_1=(a/b-r')\alpha_1$.
\\
Then
$\dps N\alpha_1=\frac{\alpha_1}{\alpha_1-\alpha_2}-\alpha_1r'$.
\end{proof}
%%%%%%%%%%%%%
\begin{CL}\label{r etc}
$\dps N\alpha_1=
\big[\frac{\alpha_1}{\alpha_1-\alpha_2}\big]+r''-\alpha_1r'$.
\end{CL}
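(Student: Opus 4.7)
The plan is to deduce this directly from Claim \ref{N et alpha1} together with the definitions of $a$, $b$ and $r''$. By definition $a=a_1a_2$ and $b/a=\alpha_1-\alpha_2$, so
$$\frac{\alpha_1}{\alpha_1-\alpha_2}=\frac{a\alpha_1}{b}.$$
Hence $r''=a\alpha_1/b-[a\alpha_1/b]$ is exactly the fractional part of $\alpha_1/(\alpha_1-\alpha_2)$, and therefore
$$\frac{\alpha_1}{\alpha_1-\alpha_2}=\left[\frac{\alpha_1}{\alpha_1-\alpha_2}\right]+r''.$$

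The next step is simply to substitute this identity into the conclusion of Claim \ref{N et alpha1}, which reads
$$N\alpha_1=\frac{\alpha_1}{\alpha_1-\alpha_2}-\alpha_1 r'.$$
Replacing the first term on the right-hand side by $[\alpha_1/(\alpha_1-\alpha_2)]+r''$ yields the desired equality
$$N\alpha_1=\left[\frac{\alpha_1}{\alpha_1-\alpha_2}\right]+r''-\alpha_1 r'.$$

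I do not expect any real obstacle here: the claim is a purely formal manipulation that rewrites the rational term of Claim \ref{N et alpha1} by separating its integer and fractional parts, once one recognizes that $\alpha_1/(\alpha_1-\alpha_2)$ and $a\alpha_1/b$ are literally the same number. The only thing to be careful about is checking that $r''$ is indeed well-defined (i.e. that $a\alpha_1/b$ need not be an integer), but since we only use the defining identity $x=[x]+(x-[x])$ for any real $x$, this causes no issue.
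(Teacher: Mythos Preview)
Your proof is correct and follows essentially the same approach as the paper: both invoke Claim~\ref{N et alpha1} and then replace $\alpha_1/(\alpha_1-\alpha_2)=a\alpha_1/b$ by its integer part plus $r''$.
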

\begin{proof}
By Claim~\ref{N et alpha1}
$\dps\frac{\alpha_1}{\alpha_1-\alpha_2}-\alpha_1r'=N\alpha_1$.
\\

Moreover,
$\dps \frac{\alpha_1}{\alpha_1-\alpha_2}=\frac{a\alpha_1}b=
\big[\frac{\alpha_1}{\alpha_1-\alpha_2}\big]+r''$, by definition of $r''$.
\end{proof}
%%%%%%%%%%%%%%%%%%%%%%
\begin{CL}\label{r''}
If $\dps [N\alpha_1]=\big[\frac{\alpha_1}{\alpha_1-\alpha_2}\big]-1$
then $r''=r+\alpha_1r'-1$.
\end{CL}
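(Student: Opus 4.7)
The plan is to derive Claim~\ref{r''} as a direct algebraic consequence of Claim~\ref{r etc} together with the definition of $r$ and the hypothesis on $[N\alpha_1]$. All the heavy lifting has already been done in the two preceding claims, so no new estimates on $b/a$, $a_i$, $b_i$ or $\varepsilon$ will be needed here.

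First I would recall from the definition that $r$ is the fractional part of $N\alpha_1$, i.e.
\[
N\alpha_1 = [N\alpha_1] + r.
\]
Simultaneously, Claim~\ref{r etc} gives the alternative expression
\[
N\alpha_1 = \Bigl[\frac{\alpha_1}{\alpha_1-\alpha_2}\Bigr] + r'' - \alpha_1 r'.
\]
Equating the two right-hand sides yields
\[
[N\alpha_1] + r = \Bigl[\frac{\alpha_1}{\alpha_1-\alpha_2}\Bigr] + r'' - \alpha_1 r'.
\]

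Now I would plug in the hypothesis $[N\alpha_1] = \bigl[\tfrac{\alpha_1}{\alpha_1-\alpha_2}\bigr] - 1$. The two integer-part terms cancel on each side up to a $-1$, and solving for $r''$ gives
\[
r'' = r + \alpha_1 r' - 1,
\]
which is exactly the claimed identity.

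There is essentially no obstacle: the only thing to verify is that the substitution is carried out correctly and that the claim is purely formal, independent of $\varepsilon$ and of whether $N\alpha_1 \in \mathbb{N}$ (the hypothesis of course forces $N\alpha_1 \notin \mathbb{N}$, so the quantities $r$, $r'$, $r''$ are all well-defined fractional parts in $(0,1)$). Thus the proof will fit in just a few lines and serves purely as a bookkeeping lemma that will presumably be combined with the case analysis of Lemma~\ref{(I) et (II)} to verify Property~$(\ast\ast)$ in the branch $[N\alpha_1] = \bigl[\tfrac{\alpha_1}{\alpha_1-\alpha_2}\bigr] - 1$.
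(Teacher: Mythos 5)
Your proof is correct and follows essentially the same route as the paper: you equate the two expressions for $N\alpha_1$ (the definition $N\alpha_1=[N\alpha_1]+r$ and Claim~\ref{r etc}, which is just Claim~\ref{N et alpha1} rewritten via the definition of $r''$) and substitute the hypothesis, exactly as the paper does. The identity follows by the same bookkeeping, so there is nothing to add.
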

\begin{proof}
First, we may note that $\dps N\alpha_1=[N\alpha_1]+r$, by definition of $r$.
\\

Assume that $\dps [N\alpha_1]=\big[\frac{\alpha_1}{\alpha_1-\alpha_2}\big]-1$.
\\

By Claim~\ref{N et alpha1},
$\dps \frac{\alpha_1}{\alpha_1-\alpha_2}-\alpha_1r'=
N\alpha_1=
 [N\alpha_1]+r=
\big[\frac{\alpha_1}{\alpha_1-\alpha_2}\big]-1+r$.
\\

Hence 
$\dps\frac{\alpha_1}{\alpha_1-\alpha_2}=
\big[\frac{\alpha_1}{\alpha_1-\alpha_2}\big]+\alpha_1r'+r-1$.
\\

So $r''=r+\alpha_1r'-1$, by definition of $r''$.
\end{proof}
%%%%%%%%%%%%%%%%%%%%%%

\medskip\noindent
We want to find positive integers 
$m$ and $\alpha$ such that $\alpha<m$,
and which satisfy Property~$(**)$.
First, we consider separately the case $b=1$.

\begin{LM}\label{case b=1}
If $b=1$ then $m=a-1$ and $\alpha=a_1b_2$ satisfy property $(*)$
and $0<\alpha<m$.
\end{LM}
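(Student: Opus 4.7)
\textbf{Proof plan for Lemma \ref{case b=1}.}
The plan is to show that the pair $(m,\alpha) = (a-1,\, a_1 b_2)$ satisfies Property $(**)$; since we are in Step~$3$ (where $\epsilon=-1$), this immediately gives Property $(*)$ via the implication ``$(**)\Rightarrow(*)$'' already recorded at the start of this step, because $\epsilon=-1$ in $(E2)$ forces $b_3/a_3<b/a$.

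The starting observation is that the hypothesis $b=1$ translates, by the definition $b=a-b_1a_2-b_2a_1$ with $a=a_1a_2$, into the clean identity
\[
a_1 b_2 \;=\; a_2(a_1-b_1)-1.
\]
This is the only arithmetic content; everything else is plugging in.

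First I would verify $(III)$: with $b=1$ and $m=a-1$, the inequality $b/a<1/m$ is just $1/a<1/(a-1)$. Next, for $(I)$, multiplying $\alpha<m\alpha_1$ through by $a_1$ turns it into
\[
a_1 a_2(a_1-b_1)-a_1 \;<\; (a-1)(a_1-b_1),
\]
which simplifies to $-b_1<0$. For $(II)$, since $b_2>0$, the inequality $(a-1)b_2/a_2<a_1 b_2$ reduces to $a-1<a_1a_2=a$. Finally, the positivity and size conditions $0<\alpha<m$ follow from $b_2\geq 1$ and $a_1\geq 2$ (giving $\alpha\geq 2$) together with $b_2<a_2$ (giving $\alpha=a_1b_2\leq a_1(a_2-1)=a-a_1<a-1=m$).

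There is no real obstacle here; the lemma is a direct computation isolating the (slightly exceptional) boundary case $b=1$ so that the main argument of Step~$3$ -- which uses the integers $N=[a/b]$, $r$, $r'$, $r''$ and is framed in Lemma~\ref{(I) et (II)} -- may afterwards be carried out under the assumption $b\geq 2$ (this is the regime in which $[a/b]$ genuinely is an interesting integer, and in which the estimates of Lemma~\ref{m et alpha}$(ii)$ have been established).
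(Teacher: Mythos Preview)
Your proposal is correct and follows essentially the same approach as the paper: both verify $0<\alpha<m$ and the three inequalities $(I)$, $(II)$, $(III)$ of Property~$(**)$ by direct computation, using the identity $a_1b_2=a_2(a_1-b_1)-1$ coming from $b=1$. The only difference is cosmetic (you clear denominators by multiplying through by $a_1$ in $(I)$, whereas the paper writes $m\alpha_1=(a_1a_2-1)(a_1-b_1)/a_1>a_2(a_1-b_1)-1$ directly).
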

\begin{proof}
Assume that $b=1$ and let $m=a-1$ and $\alpha=a_1b_2$.
First, we can check that $0<\alpha<m$ because
$a_1b_2\leq a_1(a_2-1)<a_1a_2-1$.
Now, we want to check successively $(I)$ to $(III)$.
\\\\
$(I)\Leftrightarrow \alpha<m\alpha_1$.

$\dps m\alpha_1=(a_1a_2-1)\frac{a_1-b_1}{a_1}>a_2(a_1-b_1)-1$.
\\

Since $b=1$, $a_2(a_1-b_1)-1=a_1b_2<m\alpha_1$.
\\\\
$(II)\Leftrightarrow m\alpha_2<\alpha$.

$\dps m\alpha_2=(a_1a_2-1)\frac{b_2}{a_2}=a_1b_2-\frac{b_2}{a_2}<\alpha$.
\\\\
$(III)\Leftrightarrow \dps\frac ba<\frac1m$.
\\

Since $b=1$ and $m=a-1$, $(III)$ is direct.
\end{proof}

In the following of this section, we assume that $b\not=1$.
We distinguish the three following cases.
\\

{\sc Case} A : $N\alpha_1\in\mathbb N$. Then $(\alpha,m)=(N\alpha_1-1,N-1)$.

{\sc Case} B~: $N\alpha_1\not\in\mathbb N$ and $r'\alpha_2>r''$ or $r''\geq \alpha_1r'$.
Then $(\alpha,m)=([N\alpha_1],N)$.

{\sc Case} C~:
$N\alpha_1\not\in\mathbb N$ and $r'\alpha_2\leq r''<\alpha_1r'$.
Then $(\alpha,m)=([N\alpha_1],N-1)$.
\\\\
First, we prove $(III)$ of Property $(**)$.
Then Lemma~\ref{(I) et (II)} concludes this step.
Note that, for $\varepsilon=1$, Lemmata~\ref {case b=1}~and~\ref{(I) et (II)} imply that $(I)$ to $(III)$ are true, but $(III)$ does not imply $(iii)$.

Furthermore, we may note that $b\not=1$ if and only if
$\dps\big[\frac ab\big]<\frac ab$ because $a$ and $b$ are positive coprime integers
(since $a_1$ and $a_2$ are so).
%%%%%%%%%
\begin{LM}\label{III}
We assume that $b\not=1$.
If the integers $\alpha$ and $m$ 
are chosen as in Lemma~\ref{(I) et (II)}
(according to Cases A, B or C)
then $\dps\frac b a<\frac 1 m$.
\end{LM}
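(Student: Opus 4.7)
The inequality $\dps\frac{b}{a} < \frac{1}{m}$ is equivalent to the integer inequality $mb < a$, so my plan is to verify this bound in each of the three cases A, B, C prescribed by Lemma~\ref{(I) et (II)}. The only tools required are the defining bound $Nb \leq a$ coming from $N = [a/b]$ and the observation recorded just before the lemma: $b \neq 1$ is equivalent to $Nb < a$ strictly, since $a$ and $b$ are coprime positive integers (because $a_1$ and $a_2$ are, and the Seifert invariants satisfy $0 < b_i < a_i$ with $\gcd(a_i,b_i)=1$).

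Cases A and C can be dispatched together, since in both $(\alpha,m)$ is chosen with $m = N-1$. In this situation
\[
 mb \;=\; (N-1)b \;=\; Nb - b \;\leq\; a - b \;<\; a,
\]
the last strict inequality holding simply because $b$ is a positive integer. Note that this half of the argument does not even use the hypothesis $b\neq 1$.

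Case B is the only place where the hypothesis $b \neq 1$ is genuinely used: there $m = N$, so I must upgrade $Nb \leq a$ to the strict inequality $Nb < a$. But that is precisely what the coprimality equivalence recalled in the preamble gives: $b\neq 1$ together with $\gcd(a,b)=1$ forces $b \nmid a$, hence $[a/b]\,b < a$, i.e.\ $Nb < a$, so $mb = Nb < a$. I do not anticipate any real obstacle; the proof is pure bookkeeping built directly on top of the coprimality remark that immediately precedes the statement.
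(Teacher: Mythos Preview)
Your argument is correct and matches the paper's own proof essentially line for line: both dispatch Cases~A and~C via $m=N-1<[a/b]\le a/b$, and both handle Case~B by invoking the coprimality remark (recorded just before the lemma) that $b\neq1$ forces $[a/b]<a/b$. The only cosmetic difference is that you phrase everything as the integer inequality $mb<a$ while the paper works directly with $b/a<1/m$.
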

%%%%%%%%%

\begin{proof}
Let $\alpha$ and $m$ be integers as in Cases A, B and C successively.
\\\\
Assume that Case A or Case C is satisfied.

Then $m=N-1$. Therefore $(III)$ is trivial, because $m=N-1=[a/b]-1<a/b$, so $1/m>b/a$.
\\\\
Assume now that Case B is satisfied.

Then 
$(III) \Leftrightarrow b/a<1/N \Leftrightarrow N<a/b$, which is satisfied because $N=[a/b]$
and $b\not=1$.
\end{proof}
%%%%%%%%%%%

\medskip\noindent
\begin{center}
{\bf  Proof of Lemma~\ref{(I) et (II)}}
\end{center}
We may recall that the proof does not depend on $\varepsilon=\pm 1$.

\medskip\noindent
We only have to show that the considered integers in
Cases A, B and C satisfy $(I)$ and $(II)$.
We may recall that $0<\alpha<m$ by Lemma~\ref{m et alpha}.

$$
\fbox{
\vbox{\hsize7.7truecm
\noindent
{\sc Case $A$~: $N\alpha_1\in\mathbb N$, $(\alpha,m)=(N\alpha_1-1, N-1)$}
}}$$

\smallskip\noindent
$(I)\Leftrightarrow \alpha<m\alpha_1$.

So, $(I) \Leftrightarrow N\alpha_1-1<(N-1)\alpha_1\Leftrightarrow \alpha_1<1$ which is true because $\dps 0<\frac{b_1}{a_1}<1$.
\\\\
$(II)\Leftrightarrow m\alpha_2<\alpha$.

$(II)\Leftrightarrow (N-1)\alpha_2<N\alpha_1-1\Leftrightarrow 1-\alpha_2<N(\alpha_1-\alpha_2)$.
\\

Therefore,
$(II)\dps\Leftrightarrow \frac{1}{\alpha_1-\alpha_2}-N<\frac{\alpha_2}{\alpha_1-\alpha_2}$.
\\

But recall that $\dps N=\big[\frac{1}{\alpha_1-\alpha_2}\big]$, hence 
$\dps\frac{1}{\alpha_1-\alpha_2}-N<1$.
Thus, $(II)$ follows from Claim~\ref{n=3}~$(6)$.
\\

$$
\fbox{
\vbox{\hsize8.7truecm
\noindent
{\sc Case $B$~: $r''\geq\alpha_1r'$ or $r''<r'\alpha_2$, 
$(\alpha,m)=([N\alpha_1],N)$}
}}$$

\smallskip\noindent
$(I)\Leftrightarrow \alpha<m\alpha_1$.

$(I)$ is trivially satisfied~:
$(I) \Leftrightarrow [N\alpha_1]<N\alpha_1$.
\\\\
$(II)\Leftrightarrow m\alpha_2<\alpha$.

$(II)\Leftrightarrow N\alpha_2<[N\alpha_1]
\Leftrightarrow N\alpha_2<N\alpha_1-r$. 
\\

Then
$(II)\Leftrightarrow r<N(\alpha_1-\alpha_2) \Leftrightarrow
r<(a/b-r')(\alpha_1-\alpha_2)$,
by definition of $r'$.
\\

Recall that $b/a=\alpha_1-\alpha_2$, so
\\

$(II)\Leftrightarrow r<1-r'(\alpha_1-\alpha_2)\Leftrightarrow r+r'(\alpha_1-\alpha_2)<1$.
\\\\
We want to prove that $r+r'(\alpha_1-\alpha_2)<1$.
\\\\
Assume first, that $r''\geq\alpha_1r'$.

Then Claim~\ref{r etc} implies that~:
$\dps [N\alpha_1]=\big[\frac{\alpha_1}{\alpha_1-\alpha_2}\big]$.
\\

By Claim~\ref{N et alpha1}
$\dps \frac{\alpha_1}{\alpha_1-\alpha_2}-\alpha_1r'=[N\alpha_1]+r$, then
$\dps [N\alpha_1]=\frac{\alpha_1}{\alpha_1-\alpha_2}-\alpha_1r'-r$.
\\

Thus 
$\dps \frac{\alpha_1}{\alpha_1-\alpha_2}=
\big[\frac{\alpha_1}{\alpha_1-\alpha_2}\big]
+\alpha_1r'+r$; so $r''=\alpha_1r'+r<1$.
\\

Now, we can see that~: $r+r'(\alpha_1-\alpha_2)<r+\alpha_1r'<1$,
which proves $(II)$.
\\\\
Now, we may assume that $r''<r'\alpha_2$.

By the previous work, we may assume that $r''<\alpha_1r'$.

Then Claim~\ref{r etc} implies that
$\dps [N\alpha_1]=\big[\frac{\alpha_1}{\alpha_1-\alpha_2}\big]-1$.

Therefore, Claim~\ref{r''} implies that $r''=r+\alpha_1r'-1$.

Recall that we want to show that $r+r'(\alpha_1-\alpha_2)<1$.

Since $r''<r'\alpha_2$,
we obtain~:

$r+r'(\alpha_1-\alpha_2)=r+\alpha_1r'-r'\alpha_2<r+\alpha_1r'-r''$.

Here, $r+\alpha_1r'-r''=1$, which gives the required inequality.
%%%%% SubCase 2 %%%%

$$
\fbox{
\vbox{\hsize9.5truecm
\noindent
{\sc Case $C$~: $r''<\alpha_1r'$ and $r''\geq r'\alpha_2$,
$(\alpha,m)=([N\alpha_1],N-1)$}
}}$$

\smallskip\noindent
$(I)\Leftrightarrow \alpha<m\alpha_1$.

$(I)\Leftrightarrow [N\alpha_1]<(N-1)\alpha_1\Leftrightarrow \alpha_1<r$.
\\

Since $r''<\alpha_1r'$, 
by Claim~\ref{r etc}~:
$\dps [N\alpha_1]=\big[\frac{\alpha_1}{\alpha_1-\alpha_2}\big]-1$.
\\

Then, by Claim~\ref{r''}~: $r''=r+\alpha_1r'-1$.
\\

Thus $(I)\Leftrightarrow \alpha_1<r''-\alpha_1r'+1 \Leftrightarrow
\alpha_1r'-r''<1-\alpha_1$.
\\

Hence, $\dps(I)\Leftrightarrow \alpha_1r'-r''<\frac{b_1}{a_1}$,
because $\dps 1-\alpha_1=\frac{b_1}{a_1}$.
\\

On the other hand,
$\dps\alpha_1r'-r''< \alpha_1-r''$ and $\dps \alpha_1-r''\leq\alpha_1\leq\frac{b_1}{a_1}$
by (3).

Therefore $(I)$ is satisfied.
\\\\
$(II)\Leftrightarrow m\alpha_2<\alpha$.

$(II)\Leftrightarrow (N-1)\alpha_2<[N\alpha_1]$.
\\

By Claim~\ref{r etc} and the definition of $r''$, and since $r''<\alpha_1r'$~:
\\

$\dps[N\alpha_1]=[\frac{\alpha_1}{\alpha_1-\alpha_2}]-1=
\frac{\alpha_1}{\alpha_1-\alpha_2}-r''-1$.
\\

Moreover, by the definition of $r'$~:
$\dps N\alpha_2=\frac{\alpha_2}{\alpha_1-\alpha_2}-\alpha_2r'$.
\\

Hence 
$(II)\Leftrightarrow N\alpha_2<[N\alpha_1]+\alpha_2
\Leftrightarrow\dps\frac{\alpha_2}{\alpha_1-\alpha_2}-\alpha_2r'
<\frac{\alpha_1}{\alpha_1-\alpha_2}-r''-1+\alpha_2$.
\\

Therefore,
$(II)\Leftrightarrow 
r''-r'\alpha_2<\alpha_2$.
\\

On the other hand, $\alpha_2>\alpha_1-\alpha_2$ by $(6)$
and $r'\alpha_2\leq r''<\alpha_1r'$.
\\

Then,
$r''-r'\alpha_2<r'(\alpha_1-\alpha_2)<r'\alpha_2<\alpha_2$,
which proves that $(II)$ is satisfied.

\hfill{\it Proof of Lemma~\ref{(I) et (II)}}

\qed
%%%%%%%%%%%%%%%%
\\\\
In conclusion, Lemma~\ref{case b=1} solve the case $b=1$.
If $b\not=1$, then for the $\alpha$ and $m$ chosen as in
Lemma~\ref{m et alpha}, we get that $0<\alpha<m$
and Lemmata~\ref{(I) et (II)}~together with~\ref{III} show that 
they satisfy $(I), (II)$ and $(III)$.
Therefore, Property~$(*)$ is satisfied for $n=3$ and $\varepsilon=-1$.
%%%%%% STEP 4 %%%%%%
\subsection{Step 4~: $n=3$ and $\epsilon=1$}
\quad
\\
Recall that $a=a_1a_2$ and
$b=a-b_1a_2-b_2a_1$.

We assume that $\epsilon=1$ then $(E2)$ gives~:
$$(E3)\qquad \dps\frac{b_3}{a_3}= \frac{b}{a}+\frac{1}{a_1a_2a_3}$$
so~:
$$(7)\qquad ab_3-ba_3= 1.$$
Then (Bezout relation) there exists a unique pair of positive coprime integers $(u,v)$ such that~:
$$(8)\left\{ \begin{array}{lll}
au-bv=1;\\
0<u\leq b&\hbox{and}\\
0<v\leq a\\
\end{array} \right.
$$
Now, $(7)$ implies that there exists $p\in\mathbb N$ such that
$$\left\{ \begin{array}{ll}
b_3=u+bp&\hbox{and}\\
a_3=v+ap\\
\end{array} \right.
$$
Moreover, for all $p\in\mathbb N$, we have~:
$$\dps(9)\qquad \frac{u}{v}\geq \frac{u+bp}{v+ap}>\frac{u+b(p+1)}{v+a(p+1)}>\frac{b}{a}$$
We want to find positive integers $\alpha$ and $m$ 
such that $\alpha<m$ and satisfy Pro\-perty~$(*)$.
We consider separately the three following cases.
\\

{\sc Case I}~: 
 $u\not=1$.

{\sc Case II} : 
 $u=1$ and $b=1$.

{\sc Case III} : 
 $u=1$ and $b\not=1$.
 \\
%%%%%%%%%%%%%%%%%
%%%%%%%% CASE I %%%%%
%%%%%%%%%%%%%%%%
$$
\fbox{
\vbox{\hsize2.5truecm
\noindent
{\sc Case I~: $u\not=1$}
}}$$
%%%%%%%%%%%%%%%
\\
We will choose the integers $\alpha$ and $m$ as in Lemma~\ref{(I) et (II)},
so $m\in\{N-1,N\}$.
By $(9)$, if $\dps \frac u v<\frac 1 m$, then $(iii)$ of
Property $(*)$ is satisfied.
%Note that if $b=1$, then
%$N=a$, $N\alpha_1=a-b_1a_2\in\mathbb N$; hence $m=N-1$. 
Therefore, Lemma~\ref{(I) et (II)} and the following lemma conclude Case I.
\begin{LM}\label{caseI}
If $N-1\leq m\leq N$ and
$u\not=1$, then $\dps\frac{u}{v}< \frac{1}{m}$.
\end{LM}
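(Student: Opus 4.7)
\textbf{Proof plan for Lemma~\ref{caseI}.} The plan is to prove the stronger inequality $Nu<v$, which suffices since the hypothesis $m\leq N$ then gives $u/v<1/N\leq 1/m$. Everything will come from the Bezout-type identity $au-bv=1$ together with the Euclidean division of $a$ by $b$.

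First I would record three preliminary facts. Since $au-bv=1$, we have $\gcd(a,b)=1$. Since Case~I assumes $u\neq 1$, the positivity condition $0<u\leq b$ from $(8)$ forces $b\geq 2$ (otherwise $b=1$ would give $u=1$). Finally, writing the Euclidean division $a=Nb+r'$ with $N=[a/b]$ and $0\leq r'<b$, the coprimality $\gcd(a,b)=1$ together with $b\geq 2$ rules out $r'=0$, so $r'\geq 1$.

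Next I would substitute $a=Nb+r'$ into the Bezout relation. One obtains
\[
b(v-Nu)=au-Nbu-1=r'u-1.
\]
Thus $r'u-1$ is a nonnegative multiple of $b$. Since $u\geq 2$ (from $u\neq 1$) and $r'\geq 1$, we have $r'u\geq 2$, so $r'u-1\geq 1$. Being a positive integer multiple of $b$, it is in fact at least $b$, hence
\[
v-Nu=\frac{r'u-1}{b}\geq 1,
\]
that is, $Nu<v$, i.e.\ $u/v<1/N$. Combining with $m\leq N$ yields $u/v<1/N\leq 1/m$, as required.

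The only place where care is needed is the step $r'u-1\geq b$: it depends on knowing that $r'u-1$ is strictly positive (so that being a multiple of $b$ forces it to be at least $b$), which is precisely why the hypothesis $u\neq 1$ is used. The coprimality $\gcd(a,b)=1$ — which is the main obstacle if one forgets it — is what excludes the degenerate case $r'=0$ when $b\geq 2$, so the division argument actually produces a nonzero remainder.
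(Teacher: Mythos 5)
Your proof is correct, and it takes a genuinely different route from the paper's. You prove the stronger, $m$-free statement $u/v<1/N$ directly: from the Euclidean division $a=Nb+r'$ (with $r'\geq 1$ because $\gcd(a,b)=1$ and $b\geq 2$, the latter forced by $u\neq 1$ and $0<u\leq b$), the Bezout relation $au-bv=1$ gives $b(v-Nu)=r'u-1\geq 1$, hence $v-Nu\geq 1$ and $u/v<1/N\leq 1/m$. The paper instead argues by contradiction: assuming $u/v\geq 1/m$, it invokes Lemma~\ref{III} to get $b/a<1/m<u/v$, uses $a/b-v/u=1/(ub)$ to pin down $[v/u]=m-1$, and then splits into the cases $m=N$ and $m=N-1$, deriving contradictions via the fractional parts $r'$ and $\rho$ (using $r'\geq 1/b$ in the first case and $1/(ub)\leq 1/4$ in the second); it also leans on Lemma~\ref{m et alpha} to know $m\geq 2$. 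What your version buys is self-containedness and sharpness: no case split on $m\in\{N-1,N\}$, no appeal to Lemma~\ref{III} or to the fractional-part bookkeeping, and a conclusion valid for every $m\leq N$; what the paper's version buys is that it stays inside the machinery ($r'$, $\rho$, Lemma~\ref{III}) already set up for the surrounding argument. Your one delicate step, $r'u-1\geq b$, is justified exactly as you say: it is a positive integer multiple of $b$, positivity coming from $u\geq 2$ and $r'\geq 1$.
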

\begin{proof}
Assume that $N-1\leq m\leq N$ and $u\not=1$.
First, note that $b\not=1$, because $0<u\leq b$ implies that if $b=1$ then $u=1$.

We make the proof by contradiction.
So, we suppose that $\dps\frac{u}{v}\geq \frac{1}{m}$, and we look for a contradiction.
Note that $v=um$ cannot happen, because $u,v$ are coprime integers,
and $u$ and $m$ are at least $2$, by Lemma~\ref{m et alpha}.
Thus
$\dps\frac{u}{v}> \frac{1}{m}$.

Moreover, by Lemma~\ref{III}~: $\dps\frac b a<\frac 1 m$.
Then
$$\frac{b}{a}<\frac{1}{m}<\frac{u}{v}$$
By $(8)$~: 
$$\frac{a}{b}-\frac{v}{u}=\frac{1}{ub}$$
we obtain
$$\dps 0<m-\frac{v}{u}<\frac{a}{b}-\frac{v}{u}=\frac{1}{ub}<1$$
which implies that
$$\big[\frac{v}{u}\big]=m-1$$
Now, let 
\begin{center}
$\dps r'=\frac{a}{b}-\big[\frac{a}{b} \big]<1$, and $\dps \rho=\frac{v}{u}-\big[\frac{v}{u} \big]<1$
\end{center}
We consider separately the cases $m=N$ and $m=N-1$.
\\\\
First, assume that $\dps m=N= \big[\frac{a}{b} \big]$.

Then
$\dps\big[\frac{v}{u} \big]= \big[\frac{a}{b} \big]-1\Leftrightarrow \frac{a}{b}-r'-1=
\frac{v}{u}-\rho$;

hence 
$\dps\frac{1}{ub}=1+r'-\rho\Rightarrow 1+r'-\rho<\frac{1}{b}$ because $u\neq 1$.

Thus $\dps r'<\frac{1}{b}$ because $\rho<1$.
\\

Nevertheless, 
$\dps r'=\frac{a}{b}-\big[\frac{a}{b} \big]$, $a$ and $b$ are coprime, and $a>b$. 
\\

Hence $a=bk+l$, where $k\in\mathbb{N^*}$, and $1\leq l\leq b-1$;
\\

so $r'$ can be written 
$\dps r'=k+\frac{l}{b}-\big[k+\frac{l}{b} \big]=\frac{l}{b}\Rightarrow r'\geq \frac{1}{b}$;
which is a contradiction.
\\\\
Now, assume that $\dps m=N-1= \big[\frac{a}{b} \big]-1$.

Then 
$\dps\big[\frac{v}{u} \big]= \big[\frac{a}{b} \big]-2\Leftrightarrow \frac{a}{b}-r'-2=
\frac{v}{u}-\rho$;
\\

hence $\dps\frac{1}{ub}=2+r'-\rho$.
\\

This implies that $\dps\frac{1}{ub}>1$ because $r'$ and $\rho$ lies in $[0,1[$.

On the other hand,
$\dps\frac{1}{ub}\leq\frac 1 4$, because $b\geq2$ and $u\geq 2$.
\\

These are in a contradiction.

\hfill\textit{(Lemma~\ref{caseI})}

\end{proof}
%%%%%%%%%%%%%%%%%
%%%%%%%% CASE II %%%%%
%%%%%%%%%%%%%%%%
$$
\fbox{
\vbox{\hsize4.5truecm
\noindent
{\sc Case  II~: $u=1$ and $b=1$}
}}$$
%%%%%%%%%%%%%%%
\\
We assume that $u=b=1$.
Then $au-bv=1$ gives $v=a-1$. 

We consider separately the cases where $\dps\frac{b_1}{a_1}>\frac{1}{2}$
or $\dps\frac{b_1}{a_1}=\frac{1}{2}$.
\\\\
First, assume that $\dps\frac{b_1}{a_1}>\frac{1}{2}$.

Let $m=a-2$, and $\alpha=a_2(a_1-b_1)-1=a-a_2b_1-1=b_2a_1$ (because $b=a-a_2b_1-a_1b_2=1$.
Then $0<\alpha<m$.
\\\\
We want to check $(I), (II)$ and $(iii)$.
\\
$(I)\dps\Leftrightarrow \alpha<m\alpha_1 \Leftrightarrow a_2(a_1-b_1)-1<(a_1-b_1)a_2-2\alpha_1
\Leftrightarrow\frac{1}{2}<\frac{b_1}{a_1}$ (which is satisfied here).
\\\\
$(II)\dps\Leftrightarrow m\alpha_2<\alpha \Leftrightarrow (a-2)\alpha_2<a_2(a_1-b_1)-1
\Leftrightarrow 1-2\alpha_2<a_2(a_1-b_1)-a\alpha_2$;
which is satisfied because
$a_2(a_1-b_1)-a\alpha_2=a_2(a_1-b_1)-a_1b_2=b=1$.
\\\\
By $(9)$, $(iii)$ is satisfied if $\dps \frac u v<\frac 1 m$;
which is true because $\dps \frac u v=\frac 1{a-1}$ and $\dps \frac 1 m=\frac 1{a-2}$.
\\\\\\
Now, assume that $\dps\frac{b_1}{a_1}=\frac{1}{2}$.

Then $a_1=2$ and $b_1=1$.
Since $1=b=a_2(a_1-b_1)-a_1b_2$, $a_2=1+2b_2$.
So~:
$$\dps\frac{b_2}{a_2}=\frac{b_2}{1+2b_2}$$

and by $(E2)$~:
$\dps\frac{b_3}{a_3}=\frac 1 2-\frac{b_2}{1+2b_2}+\frac 1{2(2b_2+1)a_3}$.
\\

Thus,
$\dps\frac{b_3}{a_3}=\frac{(2b_2+1)a_3-2b_2a_3+1}{2(2b_2+1)a_3}$, i.e.
$$\dps\frac{b_3}{a_3}=\frac{a_3+1}{2(2b_2+1)a_3}$$
We consider separately the cases $b_2=1$ and $b_2>1$.
\\
Assume first $b_2=1$.

Then $a_2=3$ so $\dps\frac{b_3}{a_3}=\frac{a_3+1}{6a_3}$.
\\

Therefore, we can check easily that
$\alpha=2$ and $m=5$ satisfy Property $(*)$.

$(i)\qquad \dps\frac{b_1}{a_1}=\frac 1 2<\frac{m-\alpha}{m}$, which is $\dps\frac 3 5$;
\\

$(ii)\qquad \dps\frac{b_2}{a_2}=\frac 1 3<\frac{\alpha}{m}$, which is $\dps\frac 2 5$; and
\\

$(iii)\qquad \dps\frac{b_3}{a_3}=\frac{a_3+1}{6a_3}<\frac{1}{m}$
(which is $\dps\frac 1 5$)
if and only if $a_3> 5$.
\\

By $(5)\ a_3\geq 5$, but if $a_3=5$ then $M\cong\mathcal{P}$, so $a_3>5$.
\\\\
Now, we assume that $b_2\geq 2$.

Let $\alpha=2b_2-1$ and $m=4b_2-1$.

Since $b_2\geq 2$~: $0<\alpha<m$.
We want to check $(i), (ii)$ and $(iii)$.
\\

$(i)\qquad \dps\frac{b_1}{a_1}=\frac 1 2<\frac{m-\alpha}{m}$, which is 
$\dps\frac{2b_2}{4b_2-1}$; so $(i)$ is satisfied.
\\

$(ii)\qquad \dps\frac{b_2}{a_2}=\frac{b_2}{2b_2+1}
<\frac{\alpha}{m}$, which is $\dps\frac{2b_2-1}{4b_2-1}$;
\\

and
$\dps\frac{b_2}{2b_2+1}<\frac{2b_2-1}{4b_2-1}$
if and only if $4b_2^2-b_2<4b_2^2-1$, i.e. $b_2>1$;
\\

so $(ii)$ is satisfied.
\\

$(iii)\qquad \dps\frac{b_3}{a_3}=
\frac{a_3+1}{2(2b_2+1)a_3}
<\frac{1}{m}$, which is $\dps\frac 1 {4b_2-1}$.
\\

Then, $(iii)$ is satisfied if and only if~:

$(a_3+1)(4b_2-1)<(4b_2+2)a_3$
i.e.
$4b_2<3a_3+1$.
\\

Since $\dps\frac{b_3}{a_3}=\frac{a_3+1}{2(2b_2+1)a_3}$,
$\dps b_3=\frac{a_3+1}{2(2b_2+1)}\geq 1$ (because $b_3$ is a positive integer).
\\

So $a_3+1\geq 4b_2+2$; thus $(iii)$ is satisfied.
\\
%%%%%%%%%%%%%%%%%
%%%%%%%% CASE III %%%%%
%%%%%%%%%%%%%%%%
$$
\fbox{
\vbox{\hsize4.5truecm
\noindent
{\sc Case  III~: $u=1$ and $b\not=1$}
}}$$
%%%%%%%%%%%%%%%
\\
We assume $u=1$ and $b\geq 2$.
Then $a-bv=1$ by $(8)$.

\begin{CL}\label{alpha2 and v}
If $\dps \frac{b_2}{a_2}<\frac1v$ then $m=v$ and $\alpha=1$ satisfy Property $(*)$.
\end{CL}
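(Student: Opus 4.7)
My plan is to instantiate Property~$(*)$ directly with the proposed pair $(\alpha,m)=(1,v)$ and verify each of the four required conditions $0<\alpha<m$, $(i)$, $(ii)$, $(iii)$ in turn. With this choice the three inequalities of Property~$(*)$ reduce to
$$
\alpha_1 > \frac{1}{v},
\qquad
\frac{b_2}{a_2} < \frac{1}{v},
\qquad
\frac{b_3}{a_3} < \frac{1}{v},
$$
and the condition $\alpha<m$ amounts to $v\geq 2$.

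I would first establish $v\geq 2$. From the Bezout relation $(8)$ specialized to $u=1$ we have $bv=a-1$, and from inequalities $(2)$ and $(3)$ combined with the definitions we have $b/a=\alpha_1-\alpha_2<\alpha_1\leq 1/2$. Hence $a>2b$, and because $a$ is an integer this gives $a\geq 2b+1$, so $v=(a-1)/b\geq 2$.

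Next, for $(i)$, which is equivalent to $\alpha_1>1/v$, I would substitute $b/a=1/v-1/(av)$ (coming from $bv=a-1$) into the identity $\alpha_1=b/a+\alpha_2$, obtaining
$$
\alpha_1 \,=\, \frac{1}{v}+\Bigl(\alpha_2-\frac{1}{av}\Bigr).
$$
Since $\alpha_2=b_2/a_2\geq 1/a_2$ and $a=a_1a_2$ with $a_1\geq 2$ and $v\geq 1$, we get $\alpha_2\geq 1/a_2>1/(a_1a_2 v)=1/(av)$, so the bracketed quantity is strictly positive and $(i)$ holds. Inequality $(ii)$ is exactly the hypothesis of the claim, and $(iii)$ follows from the strict ordering in $(1)$ together with $(ii)$: $b_3/a_3<b_2/a_2<1/v$.

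I do not foresee any serious obstacle here; the only mild point of care is ensuring $v\geq 2$, where one must invoke coprimality (or equivalently the integrality argument $a\geq 2b+1$) rather than merely $a\geq 2b$. Everything else is a direct manipulation of $bv=a-1$ and the Seifert orderings already fixed in Step~2.
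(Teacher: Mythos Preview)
Your proof is correct and follows essentially the same route as the paper: both reduce $(i)$ to the inequality $\alpha_2>1/(av)$ via the identity $b/a=1/v-1/(av)$ coming from $a-bv=1$, and obtain $(ii)$ and $(iii)$ directly from the hypothesis and the ordering $(1)$. The only difference is cosmetic: you verify $v\geq 2$ explicitly, whereas the paper tacitly relies on $v=N\geq 4$ (Lemma~\ref{m et alpha}(i), noting $a/b=v+1/b$ with $b\geq 2$).
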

\begin{proof}
Assume that $\dps \frac{b_2}{a_2}<\frac1v$.
To prove that $m=v$ and $\alpha=1$ satisfy Property $(*)$,
it remains to prove that 
$\dps \frac{b_1}{a_1}<\frac{v-1}v$.
Indeed, $(II)$ and $(iii)$ are trivially satisfied because
$\dps\frac{b_3}{a_3}<\frac{b_2}{a_2}<\frac1v$.
\\

By $(8)$~:
$\dps 1+\frac1{av}=\frac{b_1}{a_1}+\frac{b_2}{a_2}+\frac1v$.

But $\dps \frac{b_2}{a_2}>\frac1{av}$,
otherwise $\dps{b_2}<\frac1{a_1v}$; which is impossible.

Therefore,
$\dps \frac{b_1}{a_1}=1+\frac1{av}-\frac{b_2}{a_2}-\frac1v
<1-\frac1v$, so $\dps \frac{b_1}{a_1}<\frac{v-1}v$.
\end{proof}

Hence, in the following, we assume that $\dps\frac{b_2}{a_2}>\frac1v$
(note that the equality is impossible because the integers are coprime).
\\
\\
Let $\alpha$ be the integer such that 
$(v-1)\alpha_1-1\leq\alpha<(v-1)\alpha_1$,
and $m=min(v-1,M)$,
where $M$ is the positive integer such that
$\dps\frac{\alpha}{\alpha_2}-1\leq M<\frac{\alpha}{\alpha_2}$.
Then~:

$$
\fbox{
\vbox{\hsize6.1truecm
\noindent
$
\begin{array}{c}
\alpha=(v-1)\alpha_1-r, \hbox{\ where\ } 0<r\leq1
\\\\
\dps M=\frac{\alpha}{\alpha_2}-r', \hbox{\ where\ } 0<r'\leq1
\\\\
\hbox{and\ } m=\min(M,v-1).
\\
\end{array}
$
}}
$$

\medskip
\noindent
First, we will check that $m>\alpha>0$, then we will show that
the integers $m$ and $\alpha$ satisfy Property~$(*)$.
%%%%%%%%%
%%%%%%%%%%%%%%
\begin{CL}\label{verif}
The integers $m$ and $\alpha$ satisfy~: $1\leq\alpha<m$
\end{CL}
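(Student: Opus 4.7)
The approach is to verify the two inequalities $1 \leq \alpha$ and $\alpha < m$ separately; since $m = \min(M, v-1)$, the upper bound splits into $\alpha < v-1$ and $\alpha < M$. Both of these will follow quickly from the defining relations together with $\alpha \geq 1$, so the real work is the lower bound.

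For the upper bounds, writing $\alpha = (v-1)\alpha_1 - r$ with $r \in (0,1]$ and using $\alpha_1 = (a_1-b_1)/a_1 < 1$ gives directly $\alpha < (v-1)\alpha_1 < v-1$. For $\alpha < M$, the definition yields $M \geq \alpha/\alpha_2 - 1$, and inequality~$(4)$ guarantees $\alpha_2 < 1/2$, so $\alpha/\alpha_2 > 2\alpha$; once $\alpha \geq 1$ is in hand this forces $M \geq 2\alpha \geq \alpha + 1$ (the borderline case when $\alpha/\alpha_2$ happens to be an integer is absorbed by the floor).

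The substantive step is therefore $\alpha \geq 1$. Since $\alpha$ is the integer with $\alpha = (v-1)\alpha_1 - r$ and $r \in (0,1]$, this is equivalent to the strict inequality $(v-1)\alpha_1 > 1$, that is $(v-1)(a_1-b_1) > a_1$. The plan is to combine three ingredients that are specific to Case~III: the Bezout relation $a_1 a_2 = bv + 1$ provided by $u = 1$, the expression $b = a_2(a_1-b_1) - a_1 b_2$ coming from the definition of $b$, and the working hypothesis $v b_2 > a_2$ (which we are entitled to assume after Claim~\ref{alpha2 and v}, and which, being strict between integers, yields $vb_2 \geq a_2 + 1$). Substituting the formula for $b$ into the Bezout relation and rearranging produces the cleaner identity $a_2\bigl(v(a_1-b_1) - a_1\bigr) = a_1 b_2 v - 1$; plugging in $vb_2 \geq a_2 + 1$ together with $a_1, a_2 \geq 2$ then yields $v(a_1-b_1) \geq 2a_1 + 1$, whence $(v-1)(a_1-b_1) \geq 2a_1 + 1 - (a_1-b_1) > a_1$ since $b_1 \geq 1$.

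The main obstacle I expect is simply keeping the algebra organized so that the key bound $v(a_1-b_1) \geq 2a_1 + 1$ surfaces cleanly from the three ingredients above; no genuinely new idea beyond those used in Cases~I and~II should be required, only careful bookkeeping of the integer parts and the interplay of the Bezout relation with the working hypothesis on $b_2/a_2$.
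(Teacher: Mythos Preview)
Your argument is correct and follows essentially the same route as the paper's proof: both hinge on the Bezout relation $a-bv=1$ together with the standing hypothesis $\alpha_2>1/v$ to force $(v-1)\alpha_1>1$, and both dispatch $\alpha<m$ via $\alpha_2<1/2$. The only cosmetic difference is that the paper manipulates the rational quantities directly (writing $\alpha_1=\alpha_2+1/v-1/(av)$ and invoking $a_2\geq 3$), whereas you clear denominators and work with the integer identity $a_2\bigl(v(a_1-b_1)-a_1\bigr)=a_1b_2v-1$; your integer-rounding step neatly replaces the paper's appeal to $a_2\geq 3$.
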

\begin{proof}
First, we check that $\alpha\geq1$, where $\alpha=(v-1)\alpha_1-r$, $0<r\leq1$.

We show that $(v-1)\alpha_1>1$, then $\alpha>0$. Since $\alpha\in\mathbb{N}$,
$\alpha\geq 1$.
\\

By $(8)$~:
$\dps \alpha_1=\frac{b_2}{a_2}+\frac1 v-\frac{1}{a_1a_2v}$.
\\

Since $\displaystyle \frac{b_2}{a_2}>\frac{1}{v}$,
$\dps \alpha_1>\frac2 v-\frac{1}{a_1a_2v}$, i.e.
$\dps \alpha_1>\frac{2a_1a_2-1}{a_1a_2v}$.
\\

Therefore $\dps v>\frac{2a_1a_2-1}{a_1a_2\alpha_1}$,
so $\dps (v-1)\alpha_1>\frac{a_1a_2(2-\alpha_1)-1}{a_1a_2}$.
\\

Finally, recall that $\dps 1-\alpha_1=\frac{b_1}{a_1}$.
\\

Thus,
$\dps (v-1)\alpha_1>\frac{a_1a_2(1+\frac{b_1}{a_1})-1}{a_1a_2}$, i.e.
$\dps (v-1)\alpha_1>\frac{a_1a_2+a_2b_1-1}{a_1a_2}$.
\\

Since $a_2\geq 3$, $\dps (v-1)\alpha_1>\frac{a_1a_2+2}{a_1a_2}>1$.
\\\\
Now, we check that $m>\alpha$.

If $m=v-1$, this is trivial.

So, we may assume that 
$\dps m= \frac{\alpha}{\alpha_2}-r'$, where $0<r'\leq 1$.
\\

Therefore, $\dps m={\alpha}(\frac1{\alpha_2}-\frac{r'}{\alpha})$.
\\

Since $\alpha\geq 1$, $\dps \frac{r'}{\alpha}\leq r'\leq 1$, so
$\dps m\geq{\alpha}(\frac1{\alpha_2}-1)$.
\\

Finally, $(4)$ implies that $\dps\alpha_2<\frac 1 2$ and so that $m> \alpha$.
\end{proof}
To show that $\alpha$ and $m$ satisfy Property~$(*)$,
we need the following claim.

%%%%%%%%%%%%%%%%%%%%%%%%
\begin{CL}\label{1/a}
$\dps\frac{\alpha_1-\alpha_2}{\alpha_1}+ \alpha_1<1-\frac1 a$.
\end{CL}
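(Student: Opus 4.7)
The plan is first to reformulate. Since $\alpha_1>0$, the target inequality $\frac{\alpha_1-\alpha_2}{\alpha_1}+\alpha_1<1-\frac{1}{a}$ is equivalent to $\alpha_2>\alpha_1^2+\alpha_1/a$, which I will prove directly. The main input will be the ordering $b_3/a_3<b_2/a_2=\alpha_2$: combined with $(E2)$ for $\varepsilon=+1$, which reads $b_3/a_3=(\alpha_1-\alpha_2)+1/(aa_3)$, this immediately gives $\alpha_2>\alpha_1/2+1/(2aa_3)$.

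Next I split according to whether $\alpha_1<1/2$ or $\alpha_1=1/2$ (by $(2)$ we have $\alpha_1=1-b_1/a_1\leq1/2$, so these are the only possibilities). If $\alpha_1<1/2$, integrality forces $2b_1\geq a_1+1$, so $1/2-\alpha_1\geq1/(2a_1)$, whence $\alpha_1(1/2-\alpha_1)\geq\alpha_1/(2a_1)$; a direct computation then gives
\[
\frac{\alpha_1}{2}+\frac{1}{2aa_3}-\alpha_1^2-\frac{\alpha_1}{a}\;\geq\;\frac{\alpha_1(a_2-2)}{2a}+\frac{1}{2aa_3}\;>\;0,
\]
using $a_2\geq 3$ (which follows from $(4)$, since $a_2=2$ would force $b_2/a_2=1/2$). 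Combined with the bound on $\alpha_2$ above, this closes the case $\alpha_1<1/2$.

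The main obstacle is the degenerate case $\alpha_1=1/2$ (i.e.\ $a_1=2$, $b_1=1$), where $\alpha_1^2=\alpha_1/2$ and the estimate from the ordering alone is too weak. My plan here is to exploit the Case~III hypothesis $u=1$: then $a=bv+1$, and since $a=2a_2$ is even, $bv$ must be odd, so both $b$ and $v$ are odd; as $b\geq 2$ in Case~III, we conclude $b\geq 3$. The ordering $b_3/a_3<\alpha_2$ translates into $b(v-4)>2/a_3-1>-1$, which (by integrality and $b>0$) yields $v\geq 4$, and then parity upgrades this to $v\geq 5$. Finally, in this setting, the target inequality $\alpha_2>\alpha_1^2+\alpha_1/a$ reduces to $a_2-1>2b$; substituting $a_2=(bv+1)/2$ turns this into $b(v-4)>1$, which is immediate from $b\geq 3$ and $v\geq 5$.
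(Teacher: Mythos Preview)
Your proof is correct. Both you and the paper split into the cases $b_1/a_1>1/2$ and $b_1/a_1=1/2$, and in the former case the arguments are close in spirit: the paper works directly with $(6)$ (i.e.\ $2\alpha_2>\alpha_1$) together with $2b_1-a_1\geq 1$ and some algebra, while you use the slight refinement $\alpha_2>\alpha_1/2+1/(2aa_3)$ coming from $(E2)$ with $\varepsilon=+1$, together with the integrality bound $1/2-\alpha_1\geq 1/(2a_1)$; these are really the same ingredients repackaged.

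The genuine difference is in the degenerate case $\alpha_1=1/2$. Both approaches reduce the claim to the same strict inequality (in your notation $b(v-4)>1$, in the paper's $a_2-4b_2<-1$, and these are equivalent via $b=a_2-2b_2$ and $a_2=(bv+1)/2$). The paper first obtains the weak inequality $a_2-4b_2\leq -1$ from $(6)$ and then excludes equality by a direct computation: if $a_2=4b_2-1$ then $v=(8b_2-3)/(2b_2-1)=4+1/(2b_2-1)$, which is an integer only for $b_2=1$, forcing $b=1$ and contradicting Case~III. Your route is different and arguably slicker: you observe that $a=2a_2$ even and $a=bv+1$ force $b$ and $v$ to be odd, so the Case~III hypothesis $b\geq 2$ upgrades to $b\geq 3$; then the ordering gives $b(v-4)>-1$, hence $v\geq 4$, hence (parity) $v\geq 5$, and $b(v-4)\geq 3$ follows. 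Your argument avoids the explicit computation of $v$ and gives a cleaner reason why the boundary case is excluded; the paper's argument, on the other hand, makes the obstruction $b=1$ visible explicitly.
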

\begin{proof}
We first consider that $\dps\frac{b_1}{a_1}=\frac1 2$.
\\

Then $\alpha_1=\dps\frac1 2$ and $a=2a_2$; so $\dps 1-\frac1 a=\frac{2a_2-1}{2a_2}$.
\\

On the other hand, 
$\dps\frac{\alpha_1-\alpha_2}{\alpha_1}+ \alpha_1
=
\dps\frac3 2-2\alpha_2=\frac{3a_2-4b_2}{2a_2}$.

Then, here~:
$\dps\frac{\alpha_1-\alpha_2}{\alpha_1}+ \alpha_1<1-\frac1 a$
if and only if
$$a_2-4b_2<-1.$$
\\
Now, $(6)$ implies $\dps\alpha_2>\frac{\alpha_1}{2}$, i.e. 
$\dps 4b_2> a_2$, so $a_2-4b_2\leq -1$.
\\

We are going to show that $a_2\not=4b_2-1$ by contradiction.

First, note that
since $b_1/a_1=1/2$,
$a=2a_2$ and $b=a_2-2b_2\not=1$.
\\

On the other hand, since $a-bv=1$, 
$\dps v=\frac{a-1}b=\frac{2a_2-1}{a_2-2b_2}$.

If $a_2=4b_2-1$, then
$\dps v=\frac{8b_2-3}{2b_2-1}$.
\\

Now,
$\dps v=4+\frac1{2b_2-1}\in\mathbb{N}$ implies that $b_2=1$,
$v=5$ and $a_2=3$.
Then $b=3-2=1$; which is a contradiction.
\\

Therefore, $a_2<4b_2-1$;
which is the required inequality.
\\\\\\
Now, we assume that $\dps\frac{b_1}{a_1}>\frac1 2$, so $2b_1-a_1>0$.
\\

Then $a_1-b_1<a_1b_2(2b_1-a_1)$,
\\

so
$(a_1-b_1)+a_1^2b_2<2a_1b_1b_2$,
\\

and
$(a_1-b_1)-2a_1b_1b_2+2a_1^2b_2<a_1^2b_2$.
Therefore~:
$$(\star)\qquad (a_1-b_1)(1+2a_1b_2)<a_1^2b_2.$$

On the other hand,
$(6)$ implies that 
$2\alpha_2>\alpha_1$, i.e.
$2a_1b_2>a_2(a_1-b_1)$.

Hence, $2a_1b_2(a_1-b_1)+(a_1-b_1)>a_2(a_1-b_1)^2+(a_1-b_1)$,
i.e.
$$(2a_1b_2+1)(a_1-b_1)>a_2(a_1-b_1)^2+(a_1-b_1).$$

Therefore, by the inequality~$(\star)$~:
$$a_2(a_1-b_1)^2+(a_1-b_1)<a_1^2b_2.$$

So $\dps\frac{a_1-b_1}{a_1^2a_2}<\frac{a_1^2b_2-a_2(a_1-b_1)^2}{a_1^2a_2}$;
i.e.
$\dps\frac{\alpha_1}a<\alpha_2-{\alpha_1^2}$.
\\

Thus, we obtain $\dps\frac 1 a<\frac{\alpha_2}{\alpha_1}-\alpha_1$,
which gives the required inequality.
\end{proof}
%%%%%%%%%%%

\medskip\noindent
Now, we will show successively that $\alpha$ and $m$ satisfy $(iii),\ (II)$ and $(I)$
of Pro\-perty~$(*)$.
\\\\
{\it - $\alpha$ and $m$ satisfy $(iii)$~: $\dps\frac{b_3}{a_3}<\frac 1 m$.}

This is trivially satisfied because by $(9)$, $\dps\frac{b_3}{a_3}\leq\frac1 v$, and $m\leq v-1$.
\\\\
{\it - $\alpha$ and $m$ satisfy $(II)$~: $m\alpha_2<\alpha$.}

Since $m\leq M$, $m\alpha_2\leq \alpha-r'\alpha_2<\alpha$ (because $r'>0$) then 
$(\alpha,m)$ trivially satisfies~$(II)$.
\\\\
{\it - $\alpha$ and $m$ satisfy $(I)$~: $\alpha<m\alpha_1$.}

Since $r>0$, $(v-1)\alpha_1-r<(v-1)\alpha_1$.
Hence,
$\alpha<m\alpha_1$ if $m= v-1$. Thus, we may assume that $m=M\leq v-2$.

So, we want to show that
$\dps(v-1)\alpha_1-r<(\frac{\alpha}{\alpha_2}-r')\alpha_1$. Now~:
\\
 $$
\begin{array}{lll}
\dps(v-1)\alpha_1-r<(\frac{\alpha}{\alpha_2}-r')\alpha_1&\Leftrightarrow& 
\dps v-\frac r{\alpha_1}<\frac{(v-1)\alpha_1-r}{\alpha_2}-r'+1
\\\\
& \Leftrightarrow&
\dps v\alpha_1\alpha_2-r\alpha_2<v\alpha_1^2-\alpha_1^2-r\alpha_1-r'\alpha_1\alpha_2+
\alpha_1\alpha_2
\\\\
& \Leftrightarrow&
\dps r(\alpha_1-\alpha_2)+r'\alpha_1\alpha_2+\alpha_1(\alpha_1-\alpha_2)<v\alpha_1(\alpha_1-\alpha_2)
\\\\
& \Leftrightarrow&
\dps v(\alpha_1-\alpha_2)>\alpha_1-\alpha_2
+r\frac{\alpha_1-\alpha_2}{\alpha_1}+r'\alpha_2
\\
\end{array}
$$

Recall that $\dps\frac b a=\alpha_1-\alpha_2$ and $a-bv=1$,
so~:
$$v(\alpha_1-\alpha_2)=\dps\frac{vb}a=\frac{a-1}a=1-\frac1 a.$$

Therefore,
$\alpha$ and $m$ satisfy $(I)$ if and only if
$$\dps 1-\frac1 a>
\alpha_1-\alpha_2
+r\frac{\alpha_1-\alpha_2}{\alpha_1}+r'\alpha_2.$$

Since $r$ and $r'$ both lie in $]0,1]$~:
\\

$\dps 
\alpha_1-\alpha_2
+r\frac{\alpha_1-\alpha_2}{\alpha_1}+r'\alpha_2
<
\alpha_1-\alpha_2
+\frac{\alpha_1-\alpha_2}{\alpha_1}+\alpha_2$,
\\

i.e.
$\dps 
\alpha_1-\alpha_2
+r\frac{\alpha_1-\alpha_2}{\alpha_1}+r'\alpha_2
<
\alpha_1
+\frac{\alpha_1-\alpha_2}{\alpha_1}
<1-\dps\frac1a$, by Claim~\ref{1/a}.
\\\\
Hence, $\alpha$ and $m$ satisfy $(I)$, which ends the proof of Proposition~\ref{main}.
%%%%%%%%%%%
%%%%%%

\end{document}